\newtheorem{lemma}{Lemma}
\newtheorem{theorem}[lemma]{Theorem}
\theoremstyle{remark}
\title{Minimum Wiener Index of Triangulations and Quadrangulations}  
\author{\'Eva Czabarka}
\author{Trevor V. Olsen}
\author{Stephen J. Smith}
\author{L\'aszl\'o Sz\'ekely}
\address{\'Eva Czabarka\\Department of Mathematics \\ University of South Carolina \\ Columbia SC 29212 \\ USA}
\email{czabarka@math.sc.edu}
\address{Trevor V. Olsen\\Department of Mathematics\\ University of South Carolina \\ Columbia SC 29212 \\ USA}
\email{tvolsen@email.sc.edu}
\address{Stephen J. Smith\\Department of Mathematics\\ University of South Carolina \\ Columbia SC 29212 \\ USA}
\email{sjs8@email.sc.edu}
\address{L\'aszl\'o Sz\'ekely\\Department of Mathematics \\ University of South Carolina \\ Columbia SC 29212 \\ USA}
\email{szekely@math.sc.edu}
\subjclass[2010]{Primary: 05C12 Secondary: 05C10}
\keywords{ planar graph, triangulation, quadrangulation, distance, Wiener index, connectivity, 
maximum degree.}              
\begin{document}

\begin{abstract}
 The Wiener index of a connected graph is the sum of the distances between all unordered pairs of vertices. We provide formulae for the minimum Wiener index of simple triangulations and quadrangulations with  connectivity at least $c$,  and provide the extremal structures, which attain those values. Our main tool is setting upper bounds for the maximum degree in highly connected triangulations and quadrangulations.
\end{abstract}

\maketitle


\section{Definitions}
	
Let $G$ be a connected graph. The {\sl Wiener index} of $G$, denoted by $W(G)$, is the sum of the distances between all unordered pairs of vertices. In formula,  
\[ W(G) = \sum_{\{u,v\} \subseteq V(G)} d_G(u,v), \]
where $d_G(u,v)$ denotes the number of edges on the shortest path between the two vertices $u$ and $v$. This index was introduced in 1947  \cite{Wie1947}  to predict the boiling point of alkanes. The Wiener index is perhaps 
the most frequently used graph parameter in the sciences.

Throughout this paper, every graph will be  simple, finite and connected unless otherwise stated. For a graph $G$, the sets $V$ and $E$ represent the vertices and edges of $G$, respectively.  The order of a graph is its number of vertices. The set of {\sl neighbors} of the vertex $v$ is denoted by $N(v)$, and 
the {\sl degree} of a vertex $v$ is denoted by $d(v)=|N(v)|$. 
We denote by $\delta(G)$ and $\Delta(G)$ the minimum and maximum degree of $G$, respectively.
A  {\sl cutset} is a set of vertices, whose removal makes the graph disconnected.
A non-complete graph $G$ of order $n\ge 3$  is {\sl $c$-connected } for a positive integer $c$, if every cutset has size at least $c$; the {\sl connectivity} $\kappa(G)$ of $G$
is the largest $c$
for which $G$ is $c$-connected. Clearly, if $G$ is not a complete graph, then
$G$ has at least $\kappa(G)+2$ vertices, the smallest cutset of $G$ has size $\kappa(G)$, and all degrees in $G$ are at least $\kappa(G)$. The notation $\simeq$ indicates 
the isomorphism of two graphs.

In this paper  we will only be concerned with {\sl planar} graphs.  Those are the graphs that can be drawn in the plane (or equivalently, in the sphere), such that no edges cross.  We will often rely on {\sl Euler's formula}, which states that  for any finite, connected planar graph $G$ drawn in the plane, $$n -e +f =2,$$ where $n$ is the order, $e$ is the number of edges, and $f$ is the number of faces in $G$. In this paper, {\sl triangulations} and {\sl quadrangulations} are simple graphs drawn in the sphere, in which every face is a triangle
or every face is a quadrangle, respectively. 
Euler's formula immediately implies that triangulations of order  $n$  have $3n-6$ edges and $2n-4$ faces, and quadrangulations with $n$ vertices have $2n-4$ edges and $n-2$ faces. It is well-known that triangulations are $3$-connected but Euler's formula does not allow them to be $6$-connected, and  quadrangulations are
$2$-connected but  Euler's formula does not allow them to be $4$-connected.  Whitney's theorem \cite{whitney} implies that all drawings of  any $3$-connected planar graph on the 
sphere are the same combinatorially, a conclusion that holds for all  classes that we consider in this paper except general quadrangulations.
We cite an elegant result of \upshape{\cite{NogSuz2015}}, although we do not use it explicitly:
every $5$-connected triangulation contains a spanning 
$3$-connected quadrangulation. Comparison of Figures   \ref{fig:evenq3cminwi} and \ref{fig:t5cminwi} incidentally  gives an
illustration for this result.

\section{ Results on  Triangulations and Quadrangulations}

Recently, there have been numerous results regarding the Wiener index on {\sl triangulations} and {\sl quadrangulations} of the sphere, which are edge maximal simple planar graphs, and edge maximal bipartite simple planar graphs, respectively. These recent results have mainly focused on upper bounds, see \cite{CheCol2019}, \cite{CzaDanOlsSze2019}, \cite{GhoGyoPauSalZam2019}, \cite{GyoPauXia2020}. Lower bounds for the Wiener index of such graphs were stated in \cite{CheCol2019}, \cite{CzaDanOlsSze2019}, without 
making extra 
assumptions on the connectivity. In this paper we complete the study of the minimum Wiener index of  triangulations and quadrangulations, by determining the minimum Wiener index among  $c$-connected simple triangulations and quadrangulations.

\begin{theorem}  [\cite{CzaDanOlsSze2019}]\label{theo:tminwi} Assume $n\geq 6$.
    The triangulation $ T_n^4 $ defined in {\upshape Figure ~\ref{fig:t4c}} minimizes the Wiener index among all  triangulations of order $n$.  The triangulation $ T_n^4 $ is $4$-connected. 
    Consequently, the triangulation $ T_n^4 $ minimizes the Wiener index  among all $4$-connected $n$-vertex triangulations as well.
\end{theorem}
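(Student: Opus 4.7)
The plan is to prove the theorem by sandwiching: establish a universal lower bound that every $n$-vertex triangulation must satisfy, and then verify that $T_n^4$ attains this bound and is $4$-connected. The lower bound will come from the simple observation that in any graph each pair of vertices contributes at least $1$ to the Wiener sum if adjacent and at least $2$ otherwise. Since a triangulation has exactly $3n-6$ edges, this yields
\begin{equation*}
W(G) \;\ge\; 1\cdot(3n-6) + 2\cdot\Bigl(\binom{n}{2}-(3n-6)\Bigr) \;=\; n(n-1) - (3n-6) \;=\; n^{2}-4n+6.
\end{equation*}
Equality forces the triangulation to have diameter $2$.

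The next step is to compute $W(T_n^4)$ directly and check it equals $n^{2}-4n+6$. Based on the notation, $T_n^4$ should be the ``double wheel'' (bipyramid) obtained by joining two apex vertices $u,v$ to all vertices of a cycle $C_{n-2}$. Assuming this, the distance structure is transparent: $d(u,v)=2$, each apex-to-rim distance is $1$, and each pair of rim vertices is either adjacent (on the cycle) or at distance $2$ through an apex. Summing these contributions I expect to recover exactly $n^{2}-4n+6$, matching the lower bound and thus certifying optimality. I would also verify from the drawing that every face is a triangle and that there are $n$ vertices, so the construction is indeed an $n$-vertex triangulation.

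For the $4$-connectivity claim, I would show that removing any three vertices leaves $T_n^4$ connected. The cases are: if both apex vertices survive, the remaining rim vertices are all joined through either apex; if one apex is removed, the remaining graph still contains the other apex adjacent to at least $n-4\ge 2$ rim vertices together with surviving cycle edges; if both apices are removed (only possible if just one rim vertex is also removed), we have a path or cycle on $n-3\ge 3$ vertices, still connected. This handles $n\ge 6$, which matches the hypothesis. The final sentence of the theorem is then a free corollary: minimizing over the subclass of $4$-connected triangulations can only produce a value at least as large as the overall minimum, and $T_n^4$ achieves it while lying in the subclass.

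The main potential obstacle is not the arithmetic but making sure the construction $T_n^4$ read off from the figure really is a triangulation and really is $4$-connected for all $n\ge 6$; in particular, small cases like $n=6$ where the ``cycle'' has only four vertices need to be checked by hand to ensure the drawing has no multi-edges and no separating triangles of the wrong type. Apart from this bookkeeping, the argument is essentially the one-line edge-count lower bound together with a direct verification, and I would not expect any deeper structural input to be needed.
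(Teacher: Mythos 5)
Your proposal is correct and follows essentially the same route as the paper: both use the edge-count lower bound $W(G)\ge (3n-6)+2\bigl(\binom{n}{2}-(3n-6)\bigr)=n^2-4n+6$ and then verify that $T_n^4$, the join of $C_{n-2}$ with two nonadjacent apices, has diameter $2$ and is $4$-connected. The paper leaves the verification of the distance structure and the connectivity as ``easy to see,'' whereas you spell out the case analysis; the content is identical.
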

\noindent {\bf Remark:} $ T_5^4 $ is the only triangulation of order $5$, but it is not  $4$-connected.  Gray vertices and dashed edges in the figures indicate the pattern to be repeated as $n$ increases.
\begin{proof}
    A triangulation contains $ 3n-6 $ edges, thus there are exactly $ 3n-6 $ pairs of vertices at distance 1 apart. If we can make sure that every remaining pair of vertices are at distance 2 apart, then we have a triangulation whose Wiener index is  $ 2(\binom{n}{2} - (3n-6)) + (3n-6) = n^2 -4n +6 $, and this is clearly the minimum possible Wiener index. This is the case with $ T_n^4 $. Furthermore,
    it is easy to see that  $ T_n^4 $ is $4$-connected for all $n\ge 6$. 
    \end{proof}

		\begin{figure}[htbp]
		\centering
			\begin{tikzpicture}
			[scale=0.7,inner sep=0.75mm, 
			vertex/.style={circle,draw}, 
			thickedge/.style={line width=0.75pt}] 
			\node[vertex] (p1) at (2.5,4) [fill=black] {};
			\node[vertex] (p2) at (2.5,2) [fill=black] {};
			
			\node[vertex] (a1) at (0,3) [fill=black] {};
			\node[vertex] (a2) at (1,3) [fill=black] {};
			\node[vertex] (a3) at (2,3) [fill=black] {};
			\node[vertex] (a4) at (3,3) [fill=black] {};
			\node[vertex,dashed] (a5) at (4,3) [fill=gray] {};
			\node[vertex] (a6) at (5,3) [fill=black] {};
			
			\draw[thickedge] (p1)--(a1);
			\draw[thickedge] (p1)--(a2);
			\draw[thickedge] (p1)--(a3);
			\draw[thickedge] (p1)--(a4);
			\draw[thickedge,dashed] (p1)--(a5);
			\draw[thickedge] (a1)--(a2)--(a3)--(a4);
			\draw[thickedge,dashed] (a4)--(a5);
			\draw[thickedge] (a5)--(a6);
			\draw[thickedge] (p1)--(a6);
			\draw[thickedge] (a1) to[out=90,in=180] (2.5,4.5);
			\draw[thickedge] (a6) to[out=90,in=0] (2.5,4.5);
			
			\draw[thickedge] (p2)--(a1);
			\draw[thickedge] (p2)--(a2);
			\draw[thickedge] (p2)--(a3);
			\draw[thickedge] (p2)--(a4);
			\draw[thickedge,dashed] (p2)--(a5);
			\draw[thickedge] (p2)--(a6);
			
		\end{tikzpicture}
			\caption{The triangulation $T_n^4$, which is the join of  the cycle $C_{n-2}$ with the edgeless graph on two vertices, minimizes the Wiener index among all triangulations of order $n\ge 5$ and are $4$-connected for $n\ge 6$.} 	
		\label{fig:t4c}
	\end{figure}
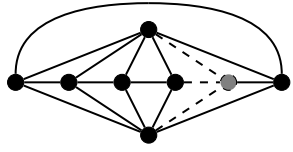
	
Triangulations and $4$-connected triangulations  fail to produce unique structures to minimize the Wiener index. With the aid of a computer, we evaluated the number of non-isomorphic
triangulations  of minimum Wiener index  up to order $18$, and  the number of non-isomorphic $4$-connected triangulations  of minimum Wiener index up to order $22$,  see Table \ref{tab:tsummary}. 
Considering the large numbers in the Table \ref{tab:tsummary}, the classification
of extremal structures seems hopeless. As will be shown, there are two $5$-connected triangulations on $19$ vertices that minimize the Wiener index.
All other graph classes studied in this paper will produce unique extremal graphs minimizing the Wiener index. 

\begin{table}
    \centering
		\begin{tabular}{ |c|c|c| } 
			\hline
			\multirow{2}{*}{Order} & General & 4-Connected   \\ [0.5ex] 
			& Triangulation Count & Triangulation Count \\
			\hline
			4 & 1 & 0 \\
			\hline
			5 & 1 & 0 \\ 
			\hline
			6 & 2 & 1 \\ 
			\hline
			7 & 5 & 1 \\ 
			\hline
			8 &12 & 2 \\ 
			\hline
			9 & 36 & 4 \\ 
			\hline
			10 & 99 & 6 \\ 
			\hline
			11 & 255 & 10 \\ 
			\hline
			12 & 614 & 10 \\ 
			\hline
			13 & 1532 & 14 \\ 
			\hline
			14 & 3908 & 15 \\ 
			\hline
			15 & 10727 & 19 \\ 
			\hline
			16 & 31242 & 21 \\ 
			\hline
			17 & 96725 & 25 \\ 
			\hline
			18 & 311735 & 27 \\
			\hline
			19 &  & 32 \\
			\hline
			20 & & 34 \\
			\hline
			21 & & 39 \\
			\hline
			22 &  & 42 \\
			\hline
		\end{tabular}
		\caption{A summary of how many isomorphism classes, on $n$ vertices, attain the minimum Wiener index 
		for general and $4$-connected triangulations.}
		\label{tab:tsummary}
		
	\end{table}

\begin{theorem}[\cite{CheCol2019},\cite{CzaDanOlsSze2019}] \label{theo:qminwi1}  Assume $n\geq 4$. 
    The complete bipartite graph $ K_{2,n-2} $ 
    minimizes the Wiener index among all quadrangulations. 
\end{theorem}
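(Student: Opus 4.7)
My approach will exploit the bipartite structure of quadrangulations, which follows from every face having even length. I will let $(A,B)$ be the bipartition of a quadrangulation $Q$ of order $n$, with $|A|=a$ and $|B|=n-a$, and use two parity facts throughout: two vertices on the same side have even distance, hence at least $2$; and two non-adjacent vertices on opposite sides have odd distance, hence at least $3$.

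I would then split the $\binom{n}{2}$ unordered vertex pairs into three classes and lower-bound their contributions to $W(Q)$ separately. The $2n-4$ edges contribute $1$ each; the $\binom{a}{2}+\binom{n-a}{2}$ same-side pairs contribute at least $2$ each; and the $a(n-a)-(2n-4)$ non-adjacent cross pairs contribute at least $3$ each. Summing and simplifying should yield
\[
W(Q)\ \geq\ (2n-4)+2\!\left[\tbinom{a}{2}+\tbinom{n-a}{2}\right]+3\bigl[a(n-a)-(2n-4)\bigr]\ =\ n^{2}+a(n-a)-5n+8.
\]

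The final step is to minimize $a(n-a)$ subject to $a+(n-a)=n$. Because every quadrangulation is $2$-connected (as recorded in the introduction), neither side can be a singleton: a side of size $1$ would turn the opposite side into an independent set joined only through that one vertex, making it a cut-vertex and contradicting $\kappa(Q)\ge 2$. Thus $a,n-a\ge 2$, so $a(n-a)\ge 2(n-2)$, with equality precisely when $\{a,n-a\}=\{2,n-2\}$. Substituting gives $W(Q)\ge n^{2}-3n+4$, which a direct count shows is attained by $K_{2,n-2}$. For uniqueness of the minimizer, equality in the bound also forces $a(n-a)-(2n-4)=0$, i.e.\ every cross pair is an edge; combined with $\{a,n-a\}=\{2,n-2\}$, this pins down $Q\simeq K_{2,n-2}$.

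The only delicate point is ruling out a singleton side, which uses the $2$-connectivity of quadrangulations on $n\ge 4$ vertices. Everything else reduces to parity considerations, straightforward counting with $|E(Q)|=2n-4$, and a one-variable integer minimization. A bonus of this approach is that it yields uniqueness of the extremal graph, even though the theorem only asserts the minimum value.
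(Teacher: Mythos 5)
Your proof is correct, but it takes a genuinely different (and heavier) route than the paper's. The paper's argument is a one-liner: a quadrangulation has exactly $2n-4$ edges, every non-adjacent pair is at distance at least $2$, so $W(Q)\ge (2n-4)+2\bigl(\binom{n}{2}-(2n-4)\bigr)=n^2-3n+4$, and $K_{2,n-2}$ attains this; no bipartiteness, connectivity, or optimization over part sizes is needed. You instead invoke the bipartition, bound same-side pairs by $2$ and non-adjacent cross pairs by $3$, arrive at $W(Q)\ge n^2+a(n-a)-5n+8$, and then must separately establish $a,n-a\ge 2$ (your cut-vertex argument for this is fine, as is the alternative edge-count argument) to minimize $a(n-a)$. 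Both give the same value $n^2-3n+4$. What your extra machinery buys is uniqueness of the extremal graph: equality forces $a(n-a)=2n-4$ so that every cross pair is an edge and $\{a,n-a\}=\{2,n-2\}$, hence $Q\simeq K_{2,n-2}$ — a fact the paper proves separately in Theorem~\ref{theo:qminwiunique} by a different sunflower-free argument about a minimum-degree vertex. Your decomposition by parity classes is essentially the template the paper itself uses later (Lemma~\ref{la:optim}) for the $3$-connected case, where the trivial bound is no longer attainable; for the unrestricted case it is more than is needed, but it is sound and self-contained.
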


\begin{proof}
    A quadrangulation contains $ 2n-4 $ edges, thus exactly $ 2n-4 $ pairs of vertices are at distance 1 apart. If we can make sure that 
    every remaining pair of vertices are at distance 2 apart, we have a quadrangulation of Wiener index $ 2(\binom{n}{2} - (2n-4)) + (2n-4) = n^2 -3n +4 $.
    This is the case with the quadrangulation $ K_{2,n-2} $. Clearly this is the least possible Wiener index of a quadrangulation. 
 \end{proof}  
   
   \begin{theorem} \label{theo:qminwiunique}  Assume $n\geq 4$. Up to isomorphism, 
    the graph $ K_{2,n-2} $ 
    is the unique minimizer  of the Wiener index among all quadrangulations of order $n$.
     \end{theorem}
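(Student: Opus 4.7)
The plan is to extract a diameter bound from the proof of Theorem~\ref{theo:qminwi1}, then combine it with the bipartiteness of a quadrangulation to force the graph to be complete bipartite, and finally use the Euler-formula edge count to determine the partition sizes.

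First, I would look carefully at the arithmetic in Theorem~\ref{theo:qminwi1}. The Wiener index of a quadrangulation $G$ of order $n$ satisfies
\[
W(G) \;=\; \sum_{\{u,v\}\subseteq V(G)} d_G(u,v) \;\geq\; (2n-4)\cdot 1 + \left(\binom{n}{2} - (2n-4)\right)\cdot 2 \;=\; n^2-3n+4,
\]
with equality if and only if every non-adjacent pair of vertices is at distance exactly $2$. So if $G$ is a minimizer, then $G$ has diameter at most $2$.

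Next, I would use the fact that every face of a quadrangulation is an even cycle, so $G$ is bipartite; let $(A,B)$ with $|A|=a$, $|B|=b$, $a+b=n$, be its bipartition. Because distances between opposite parts of a bipartite graph are odd, any $u\in A, v\in B$ at distance at most $2$ must in fact be at distance $1$. Together with the diameter bound, this means every such pair is adjacent, so $G \cong K_{a,b}$.

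Finally, a quadrangulation of order $n$ has exactly $2n-4$ edges, so $ab = 2(a+b)-4$, which rearranges as $(a-2)(b-2) = 0$. Thus $\{a,b\} = \{2,n-2\}$ and $G \cong K_{2,n-2}$, as claimed. I do not foresee a genuine obstacle here; the only step that might deserve a sentence of justification is the bipartiteness of quadrangulations, which follows from the standard fact that a plane graph whose faces all have even length must be bipartite (the face boundaries generate the cycle space over $\mathbb{F}_2$).
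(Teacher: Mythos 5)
Your proof is correct and follows essentially the same route as the paper: both deduce from minimality that every non-adjacent pair is at distance $2$, use bipartiteness to conclude that the graph must be complete bipartite, and then pin down the part sizes. The only cosmetic difference is the final step, where the paper invokes planarity (the minimizer contains $K_{\delta,\delta}$, forcing $\delta=2$) while you use the edge count $ab=2n-4$ to get $(a-2)(b-2)=0$; both are valid one-line finishes.
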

\begin{proof} 
Let $Q$ be a quadrangulation of order $n$ that has the same Wiener index as $K_{2,n-2}$, i.e. every non-adjacent pair of vertices are at distance $2$.
As quadrangulations
are $2$-connected, the minimum degree $\delta:=\delta(Q)\ge 2$. 
Let $v$ be a vertex of $Q$ with $d(v)=\delta$, and let $u_1,\ldots,u_{\delta}$ be the neighbors of $v$. The remaining $n-\delta-1$ vertices 
are at distance $2$ from $v$. As quadrangulations are bipartite, these $n-\delta-1$ vertices can only be adjacent to $u_1,\ldots,u_{\delta}$, and have degree at least $\delta$. 
Thus we get that
$Q\simeq K_{\delta,n-\delta}$. Since $\delta$ is the minimum degree, $\delta\le n-\delta$, therefore $Q$ contains $K_{\delta,\delta}$ as a subgraph. Since $Q$ is planar, we get $\delta=2$ and $Q\simeq K_{2,n-2}$.
\end{proof}

\begin{figure}[htbp]
	\centering
		\begin{tikzpicture}
		[scale=0.8,inner sep=0.75mm, 
		vertex/.style={circle,draw}, 
		thickedge/.style={line width=0.75pt}] 
		\node[vertex] (p1) at (2.5,4) [fill=black] {};
		\node[vertex] (p2) at (2.5,2) [fill=black] {};
		
		\node[vertex] (a1) at (0,3) [fill=black] {};
		\node[vertex] (a2) at (1,3) [fill=black] {};
		\node[vertex] (a3) at (2,3) [fill=black] {};
		\node[vertex] (a4) at (3,3) [fill=black] {};
		\node[vertex] (a5) at (4,3) [fill=black] {};
		\node[vertex,dashed] (a6) at (5,3) [fill=gray] {};
		
		\draw[thickedge] (p1)--(a1);
		\draw[thickedge] (p1)--(a2);
		\draw[thickedge] (p1)--(a3);
		\draw[thickedge] (p1)--(a4);
		\draw[thickedge] (p1)--(a5);
		\draw[thickedge,dashed] (p1)--(a6);
		
		\draw[thickedge] (p2)--(a1);
		\draw[thickedge] (p2)--(a2);
		\draw[thickedge] (p2)--(a3);
		\draw[thickedge] (p2)--(a4);
		\draw[thickedge] (p2)--(a5);
		\draw[thickedge,dashed] (p2)--(a6);

		\end{tikzpicture}
		\caption{The graph $ K_{2,n-2} $ which minimizes the Wiener index among all quadrangulations of order $ n $.} 
		\label{fig:qmin}

\end{figure}
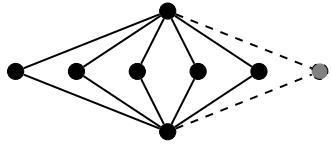

\begin{theorem} Assume that $n\geq 8$, $n\not=9$. The minimum Wiener index of $3$-connected quadrangulations of order $n$ is
\begin{equation*} 
4\left\lceil\frac{n}{2}\right\rceil^2+\left(\left\lceil\frac{n}{2}\right\rceil+21\right)\left(\left\lfloor\frac{n}{2}\right\rfloor-21\right) -5n +449=   \begin{cases}
    \frac{5n^2}{4} -5n + 8, & \text{if $n$ is even}, \\
\frac{5n^2}{4} -3n - \frac{49}{4}, & \text{if $n$ is odd}. \\
\end{cases}
\end{equation*}
The unique minimizer of the Wiener index among  $3$-connected quadrangulations  of order $n$ is $Q_n^3$, defined  in {\upshape Figure ~\ref{fig:evenq3cminwi}}.
\end{theorem}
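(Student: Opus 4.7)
The plan is to lower-bound $W(Q)$ using the bipartite structure of a $3$-connected quadrangulation $Q$, identify the bipartition sizes compatible with $3$-connectivity, construct $Q_n^3$ explicitly, and then argue uniqueness from the tight cases.

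First I would fix a $3$-connected quadrangulation $Q$ of order $n$ with bipartition $A\cup B$. Because $\delta(Q)\ge 3$ and every neighbour of a vertex of $A$ lies in $B$, we immediately have $|A|,|B|\ge 3$. Bipartiteness forces any two same-part vertices to be at distance at least $2$, and any two different-part vertices to be at distance $1$ (if adjacent) or at least $3$ (otherwise). For $v\in A$ this yields
\[
\sigma(v) \;\ge\; 2(|A|-1) + d(v) + 3(|B|-d(v)) \;=\; 2n + |B| - 2 - 2d(v),
\]
with the analogous bound for $v\in B$. Summing over all vertices and using $\sum_v d(v)=2|E|=4(n-2)$ gives
\[
W(Q) \;\ge\; n^2 - 5n + 8 + |A|\,|B|, \qquad (\ast)
\]
with equality if and only if every same-part pair has a common neighbour (distance~$2$) and every non-adjacent cross-part pair is at distance~$3$.

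Next I would study when $(\ast)$ can be tight on a $3$-connected quadrangulation. When $n$ is even the quantity $|A||B|$ subject to $|A|+|B|=n$ and equality in $(\ast)$ is minimized at $|A|=|B|=n/2$, which evaluates to exactly $\tfrac{5n^2}{4}-5n+8$. When $n$ is odd, however, $|A||B|\le \lfloor n/2\rfloor\lceil n/2\rceil = (n^2-1)/4$, so $(\ast)$ alone only produces $\tfrac{5n^2}{4}-5n+\tfrac{31}{4}$, which is strictly less than the stated formula; indeed the equation $|A|(n-|A|)=(n^2+8n-81)/4$ implied by the claimed value has no real solution. I would bridge this gap by showing a parity/planarity obstruction: at odd order $n\ge 11$ the equality configuration in $(\ast)$ cannot coexist with $3$-connectivity on the near-balanced bipartition, and the unavoidable slack (forced distance-$3$ same-part pairs or distance-$5$ cross-part pairs) accumulates to exactly $2n-20$. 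Producing this extra $2n-20$ cleanly, and explaining why the exception $n=9$ (where $2n-20<0$) must be excluded, is the step I expect to be the main obstacle, since it requires a careful combinatorial analysis of which distance configurations are simultaneously realisable in a planar bipartite $3$-connected graph.

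For existence, I would describe $Q_n^3$ concretely as a ``double wheel'': a rim cycle of length $2\lfloor(n-2)/2\rfloor$ drawn on the sphere with two hubs placed in the two complementary regions, each joined to alternate rim vertices, together with a small local modification near one hub when $n$ is odd to accommodate the extra vertex. A direct check confirms that $Q_n^3$ is a $3$-connected quadrangulation (verifying minimum degree, face structure, and the absence of $2$-vertex cuts), and a case analysis of distances (hub--hub, hub--rim, rim--rim) recovers the claimed Wiener index. For uniqueness, I would trace the equality cases in $(\ast)$ and its odd-$n$ refinement: each vertex must meet its local distance bound, and the forced existence of two hubs of the maximum feasible degree joined alternately to a common cycle, combined with planarity and $3$-connectivity, pins $Q_n^3$ down up to isomorphism.
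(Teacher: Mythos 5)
Your inequality $(\ast)$ is correct as far as it goes, but the way you use it contains a genuine logical error, and the rest of the argument leaves the hardest steps open. The bound $(\ast)$ reads $W(Q)\ge n^2-5n+8+|A||B|$, so to deduce a universal lower bound on $W(Q)$ you must substitute the \emph{smallest} value of $|A||B|$ compatible with a $3$-connected quadrangulation, not the largest: subject to $|A|+|B|=n$ the product $|A||B|$ is \emph{maximized} (not minimized) at the balanced partition, and for an unbalanced bipartition $(\ast)$ only yields $\tfrac{5n^2}{4}-5n+8-t^2$ with $t=|A|-n/2$, which is strictly below the claimed minimum. Nothing in your argument rules out an unbalanced minimizer. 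Closing this gap is exactly where the paper's machinery enters: one needs an \emph{upper} bound on the number of distance-$2$ pairs, $D_2\le \tfrac12\sum_v d^2(v)-4(n-2)$, combined with the maximum-degree bound $\Delta(Q)\le\min\{|A|-1,|B|-1\}$ (and $d(x)\le|B|-2$ for $x$ in the larger class), both proved via the sunflower subgraph $S_v$ drawn around a vertex. These force an unbalanced quadrangulation to have many same-part pairs at distance at least $4$, which is the mechanism your purely bipartite distance count cannot see. The paper's Lemma~\ref{la:optim}, in which $|A||B|$ and $\sum_v d^2(v)$ both enter with negative sign so that the trivial estimates $|A||B|\le\lceil n/2\rceil\lfloor n/2\rfloor$ and the degree-sequence maximization go in the right direction, is the correct replacement for your $(\ast)$.

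For odd $n$ you candidly acknowledge that $(\ast)$ falls short of the stated value by $2n-20$ and that producing this slack is ``the main obstacle''; that obstacle is the bulk of the actual proof (Lemma~\ref{la:q3cwiodd} and Theorem~\ref{q3cminwiodd}), carried out through a degree-sequence analysis on each partite class and several planarity arguments with the sunflower, including the elimination of a sporadic $13$-vertex candidate $Y$. The exclusion of $n=9$ is likewise not a consequence of your inequality but of the degree lemma (Lemma~\ref{la:maxdegreeq3c}~\ref{case:c}). Finally, your uniqueness argument is only a one-sentence sketch, whereas the paper derives uniqueness by reconstructing the graph around a maximum-degree vertex from its sunflower and excluding all alternative attachments of the remaining one or two vertices. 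As written, the proposal establishes a valid but strictly weaker lower bound and the existence half of the statement; the optimality and uniqueness claims are not proved.
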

The proof of this theorem is in Section~\ref{sunfl}, combining Lemma~\ref{Qn3lista}~\ref{fformula} and
Theorems~\ref{q3cminwieven}
and \ref{q3cminwiodd}. 
No $3$-connected quadrangulation exists of order $n\leq 7$ by Euler's formula,  and of order
$n=9$     by  Lemma~\ref{la:maxdegreeq3c}~\ref{case:c}.

\begin{theorem} 
Assume that $n\ge 12$, $n\ne 13$. The minimum Wiener index of $5$-connected triangulations of order $n$ is
 \begin{equation*} 
  2n\left\lceil\frac{n}{2}\right\rceil+\left(\left\lceil\frac{n}{2}\right\rceil+14\right)\left(\left\lfloor\frac{n}{2}\right\rfloor-14\right)-7n+208
  =  \begin{cases}
    \frac{5n^2}{4} -7n + 12, & \text{if $n$ is even}, \\
    \frac{5n^2}{4} -6n - \frac{9}{4}, & \text{if $n$ is odd}. \\
\end{cases}
 \end{equation*}
The unique minimizer of the Wiener index among  $5$-connected triangulations  of order $n\not=19$ is $T_n^5$, defined  on {\upshape Figure ~\ref{fig:t5cminwi}}, 
while for $n=19$, exactly two minimizers exist, namely $T_{19}^5$, and the  $5$-connected triangulation $X$ of order $19$, defined on
{\upshape  Figure ~\ref{fig:t5cotherodd1}}.
\end{theorem}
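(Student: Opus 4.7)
The strategy parallels the proofs of the earlier theorems in the paper. Writing $p_k$ for the number of unordered pairs of vertices at distance exactly $k$, every triangulation satisfies
\[
W(G) \;=\; \sum_{k\ge 1} k\,p_k \;=\; n^2-4n+6 \;+\; \sum_{k\ge 3}(k-2)\,p_k,
\]
since $p_1 = 3n-6$. Thus the theorem reduces to a tight lower bound on the ``excess'' $\sum_{k\ge 3}(k-2)p_k$: this quantity is zero for $T_n^4$ but must be $\Theta(n^2)$ once we require 5-connectivity. The plan has three stages.

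The main preparatory ingredient, analogous to Lemma~\ref{la:maxdegreeq3c} used for 3-connected quadrangulations, is a sharp upper bound of the shape $\Delta(G)\le \lceil n/2\rceil+14$ for every 5-connected triangulation $G$ of order $n\ge 12$, $n\ne 13$. To establish it, I would pick a vertex $v$ of maximum degree $\Delta$ and examine the triangulated disk bounded by the link cycle $N(v)$: its $n-1-\Delta$ interior vertices must all have degree $\ge 5$ in $G$, and 5-connectivity forbids any separator of size $<5$ inside the disk. A careful Euler-formula count inside the disk---tracking edges from the interior to the boundary and any chord-like structures forced by 5-connectivity---yields the bound, and the constant $14$ emerges from that count.

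Given the degree bound, fix $v$ with $d(v) = \Delta\le \lceil n/2\rceil+14$ and let $I := V\setminus N[v]$, so $|I|\ge \lfloor n/2\rfloor-15$. A vertex $w\in I$ is at distance $\ge 3$ from $v$ iff $w$ has no neighbor in $N(v)$; similarly, pairs inside $I$ at distance $\ge 3$ can be characterised through their local neighborhoods in $G$. Running the same degree-disk analysis at a second ``pole'' $v'\in I$ of large degree in the outer region produces a symmetric contribution, and combining the two gives exactly
\[
\sum_{k\ge 3}(k-2)\,p_k(G) \;\ge\; W(T_n^5)-(n^2-4n+6),
\]
matching the right-hand side of the theorem. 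Attainment then follows from a direct pair-count in $T_n^5$, using its structure in Figure~\ref{fig:t5cminwi}, which also yields the closed-form expression in the statement.

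For uniqueness, assume $G$ is a 5-connected triangulation of order $n$ with $W(G)=W(T_n^5)$. Equality throughout the two steps forces: (a) the maximum-degree vertex has degree exactly $\lceil n/2\rceil+14$, (b) the triangulated disk outside its link cycle has a prescribed structure, and (c) all distance-$3$ pairs come from the ``antipodal'' configuration identified in the lower-bound argument. These rigidity conditions determine $G$ up to isomorphism, except at $n=19$ where a small finite case check uncovers the second extremal graph $X$ of Figure~\ref{fig:t5cotherodd1}. The main obstacle is the degree bound in the first stage: the constant $14$ must be proved exactly for the subsequent counting to be tight, and that typically demands a lengthy case analysis of the interior triangulated disk and its interaction with 5-connectivity. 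The $n=19$ exception is the other subtle point, essentially a finite verification.
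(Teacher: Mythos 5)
There is a genuine gap, and it starts with the degree bound. You propose $\Delta(G)\le\lceil n/2\rceil+14$ and suggest the constant $14$ ``emerges'' from an Euler-formula count; in fact the $14$ in the displayed formula is a purely algebraic artifact of how the closed form is written (for $n=2k$ the expression collapses to $5k^2-14k+12$), and it has nothing to do with a degree bound. The bound actually needed is much stronger: $\Delta(G)\le\lfloor n/2\rfloor-1$, which the paper proves by constructing a ``mosaic graph'' $M_v$ around a vertex $v$ --- the closed neighborhood together with the outer triangle-completing vertices $w_1,\dots,w_{d(v)}$ --- and showing via $5$-connectivity that these $2d(v)+1$ vertices are distinct and that at least one further vertex exists outside. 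Your weaker bound cannot close the argument: the lower bound on $W(G)$ comes from maximizing $\sum_v d^2(v)$ subject to the degree constraints, and with the cap $\lceil n/2\rceil+14$ the maximum of $\sum d^2$ strictly exceeds $\sum_{T_n^5}d^2$, so the resulting lower bound on $W(G)$ falls below $W(T_n^5)$ and neither the value nor uniqueness follows. Note also that $\lceil n/2\rceil+14>\lfloor n/2\rfloor-1$ means a vertex of that degree would force more than $n$ distinct vertices in its mosaic graph, so the bound you aim for is not even the natural target of the disk analysis you sketch.

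The second stage is also missing its key quantitative ingredient. The paper does not bound the ``excess'' $\sum_{k\ge3}(k-2)p_k$ by a two-pole geometric argument; instead it proves an \emph{exact} formula $D_2=\frac{1}{2}\sum_v d^2(v)-12(n-2)$ for $5$-connected triangulations (using that every $3$-cycle is facial and every edge and every non-adjacent pair lies on a controlled number of $4$-cycles), which yields $W(T)\ge 3\binom{n}{2}+6(n-2)-\frac{1}{2}\sum_v d^2(v)$ with equality iff the diameter is at most $3$. Everything then reduces to optimizing the degree sequence under $5\le d_i\le\lfloor n/2\rfloor-1$ and to a structural rigidity analysis of the extended mosaic graph for uniqueness. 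Your proposal gives no mechanism for counting $p_2$ exactly or for certifying that the extremal graph has diameter $3$ (and, for $X$ at $n=19$, diameter $4$ with a precisely compensating degree sequence); the ``antipodal configuration'' step is not something that can be verified as stated. Finally, the exclusion of $n=13$ (parity of the degree sum forces $5$-regularity, contradicting the Handshaking Lemma) and the substantial odd-$n$ case analysis that isolates $X$ are absent.
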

The proof of this theorem is in Section~\ref{mosaicsec}, 
combining Lemma~\ref{Tn5lista}~\ref{eformula},
Lemma~\ref{Qn3lista}~\ref{eformula} and Theorems \ref{t5cminwieven}
and \ref{t5cminwiodd}. 
No $5$-connected triangulation exists of order  $n\leq 11$ by Euler's formula, 
and of order $n=13$  by Lemma~\ref{la:mosaic}.

\section{Minimum Wiener Index of 3-Connected Quadrangulations} \label{sunfl}

Note that Euler's formula implies that there are no $3$-connected quadrangulations on fewer than $8$ vertices.

First, we define an auxiliary drawn graph, which we will use extensively in  this section. Let $v$ be a vertex of a $3$-connected quadrangulation $G$.
We define the {\it sunflower graph}  $S_v$ around $v$ (in the planar drawing of $G$),  as $v$ connected to its neighbors $u_1,\ldots,u_d$ (listed in the cyclic order of the drawing,
$d=d(v)$), and different vertices $w_1,\ldots,w_d$
where $w_i$ is connected to $u_i$ and $u_{i+1}$  (indices taken modulo $d$, see 
Figure ~\ref{fig:sunflowergraphq}). We understand $S_v$ as a part of the drawing of $G$.

First we need to show that such a graph, with distinct vertices,  exists in the drawing. We will also need some special properties of the sunflower graph, which will be shown in Lemma \ref{la:sunflower} below.

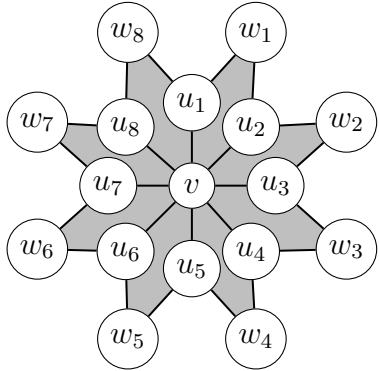
\begin{figure}[htbp]
		\centering
			\begin{tikzpicture}
			[scale=0.55,inner sep=1mm,
			vertex/.style={circle,draw,minimum size=6mm,fill=white},
			thickedge/.style={line width=0.75pt}] 
			\fill[gray!50] (0,2)--(1.53073, 3.69552)--(1.4142,1.4142)--(3.69552, 1.53073)--(2,0)--	(3.69552, -1.53073)--(1.4142,-1.5858)--(1.53073, -3.69552)--(0,-2)-- (-1.53073, -3.69552)--(-1.5858,-1.5858)--(-3.69552, -1.53073)--(-2,0)--(-3.69552, 1.53073)--(-1.5858,1.4142)--(-1.53073, 3.69552)--cycle;					
			\node[vertex] (v) at (0,0) {$v$};
			\node[vertex] (u1) at (0,2) {$u_{1}$};
			\node[vertex] (u2) at (1.4142,1.4142) {$u_{2}$};
			\node[vertex] (u3) at (2,0) {$u_{3}$};
			\node[vertex] (u4) at (1.4142,-1.5858) {$u_{4}$};
			\node[vertex] (u5) at (0,-2) {$u_{5}$};
			\node[vertex] (u6) at (-1.5858,-1.5858) {$u_{6}$};
			\node[vertex] (u7) at (-2,0) {$u_{7}$};
			\node[vertex] (u8) at (-1.5858,1.4142) {$u_{8}$};
			
			\node[vertex] (w1) at (1.53073, 3.69552) {$w_{1}$};
			\node[vertex] (w2) at (3.69552, 1.53073) {$w_{2}$};
			\node[vertex] (w3) at (3.69552, -1.53073) {$w_{3}$};
			\node[vertex] (w4) at (1.53073, -3.69552) {$w_{4}$};
			\node[vertex] (w5) at (-1.53073, -3.69552) {$w_{5}$};
			\node[vertex] (w6) at (-3.69552, -1.53073) {$w_{6}$};
			\node[vertex] (w7) at (-3.69552, 1.53073) {$w_{7}$};
			\node[vertex] (w8) at (-1.53073, 3.69552) {$w_{8}$};
            
        	\draw[thickedge] (v)--(u1);
            \draw[thickedge] (v)--(u2);
            \draw[thickedge] (v)--(u3);
            \draw[thickedge] (v)--(u4);
            \draw[thickedge] (v)--(u5);
            \draw[thickedge] (v)--(u6);
            \draw[thickedge] (v)--(u7);
            \draw[thickedge] (v)--(u8);
            
            \draw[thickedge] (u1)--(w1)--(u2);
            \draw[thickedge] (u2)--(w2)--(u3);
            \draw[thickedge] (u3)--(w3)--(u4);
            \draw[thickedge] (u4)--(w4)--(u5);
            \draw[thickedge] (u5)--(w5)--(u6);
            \draw[thickedge] (u6)--(w6)--(u7);
            \draw[thickedge] (u7)--(w7)--(u8);
            \draw[thickedge] (u8)--(w8)--(u1);

			\end{tikzpicture}
	    \caption{The sunflower graph $S_v$ around $v$ with $d(v)=8$. The region $\mathcal{R}_v$ is shaded.}
	\label{fig:sunflowergraphq}
\end{figure}
 
\begin{lemma}\label{la:sunflower} Assume that $Q$ is a drawing of a $3$-connected quadrangulation. Then, for any vertex $v$, $Q$ contains a sunflower graph $S_v$ with $2d(v)+1$ distinct vertices. Furthermore,
 the region ${\mathcal{R}}_v$ that contains $v$ and is bounded by the cycle $C_v=u_1w_1\ldots u_{d(v)}w_{d(v)}$ contains no vertices or edges that are not in $S_v$.
\end{lemma}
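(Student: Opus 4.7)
The plan is to work in a fixed spherical embedding of $Q$, which is essentially unique by Whitney's theorem since $Q$ is $3$-connected and planar. Around $v$, the $d := d(v)$ edges incident to $v$ appear in a cyclic order corresponding to $u_1,\ldots,u_d$, and they carve the neighbourhood of $v$ into $d$ quadrangular faces $F_1,\ldots,F_d$, with $F_i$ lying between the edges $vu_i$ and $vu_{i+1}$. Because face boundaries in a $3$-connected plane graph are simple cycles, each $F_i$ is a $4$-cycle of the form $vu_iw_iu_{i+1}$ on four distinct vertices; this uniquely defines $w_i$ and yields the candidate sunflower $S_v$.

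For $w_i \neq u_k$, I would invoke bipartiteness: a quadrangulation of the sphere has all faces of even length, so it is bipartite. With $v$ placed in one part of the bipartition, every $u_k$ lies in the opposite part, while each $w_i$, being adjacent to $u_i$, lies in the same part as $v$; hence $w_i \neq u_k$ for all $i$ and $k$.

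The main obstacle is to prove $w_i \neq w_j$ for distinct $i,j$, and here I would exploit $3$-connectivity via a case analysis. If $j \equiv i+1 \pmod d$ and $w_i = w_{i+1} =: w$, then at the vertex $u_{i+1}$ the two edges $u_{i+1}v$ and $u_{i+1}w$ both have $\{F_i, F_{i+1}\}$ as their pair of incident faces, which forces them to be cyclically adjacent in both directions of the rotation at $u_{i+1}$; this means $u_{i+1}$ has degree $2$, contradicting $\delta(Q) \geq 3$. If $j \not\equiv i \pm 1 \pmod d$ and $w_i = w_j =: w$, then the $4$-cycles $C_i = vu_iwu_{i+1}$ and $C_j = vu_jwu_{j+1}$ share only the vertices $v$ and $w$, so their union is a planarly embedded $K_{2,4}$ partitioning the sphere into the two empty faces $F_i$, $F_j$ together with two further regions $R_1$, $R_2$. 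A check of cyclic orders at $v$ and $w$ shows that the four vertices $u_i, u_{i+1}, u_j, u_{j+1}$ split into two pairs, one pair lying on the boundary of $R_1$ (apart from $v,w$) and the other pair on the boundary of $R_2$; moreover, every edge at such a $u_k$ other than $u_kv$ and $u_kw$ is confined to the single region, among $R_1$ and $R_2$, whose boundary contains $u_k$. Deleting $\{v,w\}$ therefore separates the vertices lying in the closure of $R_1$ from those in the closure of $R_2$, exhibiting a $2$-cut in $Q$ and contradicting $\kappa(Q) \geq 3$.

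Combining these three observations yields the $2d+1$ distinct vertices of $S_v$, and the outer closed walk $C_v = u_1w_1u_2w_2\cdots u_dw_du_1$ is consequently a simple $2d$-cycle. The statement about $\mathcal{R}_v$ is then essentially by construction: $\mathcal{R}_v$ is the union of the $d$ open faces $F_1,\ldots,F_d$ of $Q$ together with the arcs of $S_v$ separating them, and since each $F_i$ is a face of the planar drawing of $Q$, no vertex or edge of $Q$ outside $S_v$ can lie in $\mathcal{R}_v$.
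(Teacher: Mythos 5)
Your proposal is correct and follows essentially the same route as the paper's proof: define $w_i$ as the fourth vertex of the face containing $u_i, v, u_{i+1}$, rule out $w_i=u_k$ by bipartiteness, rule out equal consecutive petals via the rotation at the shared $u$-vertex together with $\delta(Q)\ge 3$, and rule out equal non-consecutive petals by exhibiting the $2$-cut $\{v,w_i\}$ separating $u_i$ from $u_j$. The only cosmetic differences are your (unneeded) appeal to Whitney's theorem and your phrasing of the consecutive case via face incidences forcing degree $2$, which is the same local-rotation argument the paper uses.
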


\begin{proof}
We know $\delta(Q)\geq 3$ by the $3$-connectedness. 
Label the neighbors of $v$ by $u_1, \dots, u_d$, in their planar cyclic order around $v$. For each pair of successive neighbors $u_i$ and $u_{i+1}$ (indices taken modulo $d$), let $w_i \not=v $ be their
common neighbor that completes the face $f_i$ that has $u_i,v,u_{i+1}$ on its boundary. This means, in particular, that the interior of $f_i$ has no vertices or edges. If $y$ is a neighbor of $u_i$ and $y\notin\{v,w_{i-1},w_i\}$ then $y$ must lie between $w_{i-1}$ and $w_i$ in the planar cyclic order around $u_i$, in particular, $w_{i-1}\ne w_i$
as $d(u_i)\ge 3$. As $Q$ is bipartite, $u_i\not= w_j$ for all $1\le i,j\le d$ .
We will show that each of the $w_i$'s must be distinct. As ${\mathcal{R}}_v$ is the union of the faces $f_i$, this finishes the proof.
Assume that  $w_i = w_j$ for some $j\ne i$. We already know that $j\notin\{i-1,i+1\}$ and the vertices $u_i,u_{i+1},u_j,u_{j+1}$ are all different.
We consider two regions of the planar drawing of $Q$: ${\mathcal{R}}_1$ is  bounded by the $4$-cycle $u_{i+1} v u_j w_i$ 
and does not contain the vertex $u_i$, and ${\mathcal{R}}_2$ is bounded by the $4$-cycle $u_ivu_{j+1}w_i$ and does not contain the vertex $u_{i+1}$. 
Thus the faces bounded by $u_ivu_{i+1}w_i$ and $u_jvu_{j+1}w_j$ are disjoint from ${\mathcal{R}}_1$ and ${\mathcal{R}}_2$.
The neighbors of $u_i$ that differ from $v$ and $w_{i}$ must lie in ${\mathcal{R}}_2$  and
the neighbors of $u_j$ that differ from $v$ and $w_{i}$ must lie in ${\mathcal{R}}_1$. 
Hence $\{v,w_{i}\}$ separates $u_i$ from $u_{j}$ (See Figure ~\ref{fig:cutsetq1}), contradicting
the fact that $Q$ is $3$-connected.
	\begin{figure}[htbp]
		\centering
			\begin{tikzpicture}
			[scale=0.6,xscale=0.75,inner sep=.5mm,
			vertex/.style={circle,draw,minimum size=7mm, fill=white},
			thickedge/.style={line width=0.75pt}] 
			\coordinate (V) at (5,1);
			\coordinate (W) at (5,5);
			\coordinate (U1) at (-1,3);
			\coordinate (U2) at (2,3);
			\coordinate (U3) at (8,3);
			\coordinate (U4) at (11,3);
			\fill[gray!59] (V)--(U1)--(W)--(U2)--cycle;
			\fill[gray!50] (V)--(U3)--(W)--(U4)--cycle;						
			\node at  (-5,2) {${\mathcal{R}}_2$};
			\node[vertex] (v) at (V) {$v$};
			\node[vertex] (ui) at (U1) {$u_{i}$};
			\node[vertex] (ui1) at (U2) {$u_{i+1}$};
			\node[vertex] (uj) at (U3) {$u_{j}$};
			\node[vertex] (uj1) at (U4) {$u_{j+1}$};
			\node[vertex] (w) at (W) {$w_i=w_j$};
			\node at (5,3) {${\mathcal{R}}_1$};
			\draw[thickedge] (v)--(ui);
			\draw[thickedge] (v)--(ui1);
			\draw[thickedge] (v)--(uj);
			\draw[thickedge] (v)--(uj1);
			\draw[thickedge] (w)--(ui);
			\draw[thickedge] (w)--(ui1);
			\draw[thickedge] (w)--(uj);
			\draw[thickedge] (w)--(uj1);
		\end{tikzpicture}
		\caption{2-element cutset appears in $S_v$, when $w_i=w_j$. The two faces $vu_iw_iu_{i+1}$ and $vu_jw_ju_{j+1}$ are shaded.}
		\label{fig:cutsetq1}
	\end{figure}
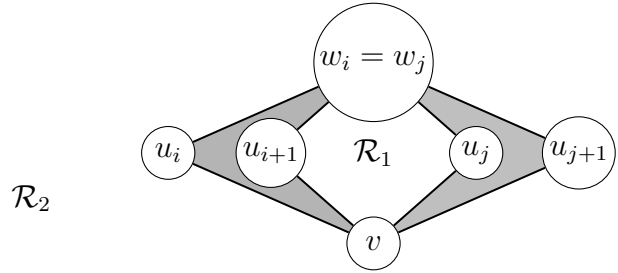
\end{proof}

\begin{lemma} \label{la:maxdegreeq3c}
    Assume that $G$ is a $3$-connected quadrangulation  with partite sets $A,B$.  Then
    \begin{enumerate}[label={\upshape (\alph*)}]
    \item \label{case:a} $
    \Delta(G) \leq \min\{|A|-1,|B|-1\}$;
       \item  \label{case:b}
    If $|B|<|A|$, then for all $x\in A$ we have $d(x) \leq |B|-2$;  and
    \item  \label{case:c} $|V(G)|\not= 9$, i.e.,   no $3$-connected quadrangulations exist on $9$ vertices.
    \end{enumerate}
\end{lemma}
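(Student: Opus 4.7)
My plan is to leverage the sunflower structure of Lemma~\ref{la:sunflower} together with the bipartite edge count $\sum_{v\in A}d(v)=|E(G)|=2n-4$ and the lower bound $\delta(G)\ge 3$ (which follows from $3$-connectedness). For any $v\in A$ of degree $d$, the sunflower $S_v$ furnishes $d$ distinct common neighbors $w_1,\ldots,w_d$ of consecutive $u_i,u_{i+1}\in B$; since $G$ is bipartite these lie in $A\setminus\{v\}$, so $d\le|A|-1$. Symmetrically, $d(v)\le|B|-1$ for $v\in B$.

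For \ref{case:a}, the remaining step is to strengthen the trivial bipartite bound $d(v)\le|B|$ to $d(v)\le|B|-1$ for $v\in A$. Suppose toward contradiction that $d(v)=|B|$. The same-side sunflower bound then gives $|A|\ge|B|+1$, while summing degrees in $A$ yields
\[ |B|+3(|A|-1)\le \sum_{x\in A}d(x)=2n-4=2|A|+2|B|-4, \]
which rearranges to $|A|\le|B|-1$, the desired contradiction. The symmetric argument handles $v\in B$, completing \ref{case:a}.

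For \ref{case:b}, I would repeat the counting with the now-improved input. Assuming $|B|<|A|$ and that some $x\in A$ has $d(x)=|B|-1$, the inequality $(|B|-1)+3(|A|-1)\le 2|A|+2|B|-4$ collapses to $|A|\le|B|$, contradicting the hypothesis. Hence $d(x)\le|B|-2$.

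Finally, \ref{case:c} falls out as a corollary. If $|V(G)|=9$, then \ref{case:a} together with $\delta(G)\ge 3$ forces both $|A|\ge 4$ and $|B|\ge 4$, so $\{|A|,|B|\}=\{4,5\}$. Applying \ref{case:b} to the larger side yields $d(x)\le 2$ for every vertex on that side, contradicting $\delta(G)\ge 3$. The only genuinely novel input across the three parts is the same-side sunflower bound $d(v)\le|A|-1$ for $v\in A$; once it is in hand, the edge count cleanly supplies the one-vertex slack needed for (a) and the two-vertex slack needed for (b), with (c) an immediate consequence of putting them together on $n=9$.
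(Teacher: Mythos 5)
Your proposal is correct, and parts \ref{case:b} and \ref{case:c} essentially coincide with the paper's own arguments (the paper, too, deduces \ref{case:b} from the degree sum $\sum_{x\in A}d(x)=2n-4$ together with $\delta(G)\ge 3$, and gets \ref{case:c} by applying \ref{case:b} to the larger side when $n=9$). Where you genuinely diverge is in part \ref{case:a}. Both proofs start from the sunflower bound of Lemma~\ref{la:sunflower}, which puts $d(v)$ many distinct petals $w_1,\dots,w_{d(v)}$ in the same partite class as $v$ and hence gives $\Delta\le\min(|B|-1,|A|)$ (in the paper's labelling, with $v\in B$). To kill the remaining case $\Delta=|A|$, the paper runs a fairly delicate topological argument: it observes that then $A=N(v)$, so every $w_i$ has at least $3$ neighbours among the $u_j$, introduces the parameters $k_i$ measuring how far the extra neighbours of $w_i$ reach around the cycle, picks a minimal one, and derives an edge crossing. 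You instead recycle the degree-sum count: if $d(v)=|A|$ with $v\in B$, then $2n-4=\sum_{x\in B}d(x)\ge|A|+3(|B|-1)$ forces $|A|\ge|B|+1$, while the same-side sunflower bound forces $|B|\ge|A|+1$ --- a contradiction. This is the same one-line computation the paper reserves for part \ref{case:b}, applied one step earlier, and it checks out: the edge count $2n-4$, the bound $\delta(G)\ge 3$, and the distinctness of the petals are all legitimately available at this point. Your route is shorter and more elementary, replacing the planarity case analysis by arithmetic; the paper's argument yields nothing extra here that the later development uses, so nothing downstream is lost by the substitution.
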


\begin{proof} \ref{case:a} Let $v$ be a vertex with degree $\Delta=\Delta(G)$, we may assume   $v\in B$. As the sunflower  $S_v$ is a subgraph of the planar drawing of $G$, 
 $\Delta \leq \min(|B|-1,|A|)$. We are done unless $|B|>|A|=\Delta$, so assume that is the case.
As $A=N(v)$, all neighbors of the vertices of $B$ lie in $N(v)$, in particular, every $w_i$ has at least $3$ neighbors in $A$.
For each $i$ let $k_i$ be the largest positive integer such that
$w_i$ has no neighbors in the set $\{u_{i-t}: 1\le t\le k_i-1\}\cup \{u_{i+1+t}: 1\le t\le k_i-1\}$.
Since for $k=1$ the sets $\{u_{i-t}: 1\le t\le k-1\}$ and $\{u_{i+1+t}: 1\le t\le k-1\}$ are empty, such positive integers exist, they have an upper bound from
the fact that $w_i$ has at least $3$ neighbors in $A$, and
for the largest such integer $k_i$ we have that at least one of $u_{i-k_i},u_{i+1+k_i}$ is a neighbor of $w_i$ that is different from $u_i,u_{i+1}$.
Choose $i_0$ such that $k=k_{i_0}$ is minimal amongst the $k_i$. By renumbering the $u_i$ if necessary and changing the direction of the cyclic order
we can assume that $i_0=1$ and $w_1$ is connected to $u_1,u_2,u_{2+k}$ but none of $u_{1-t},u_{2+t}$ for all $1\le t\le k-1$. Let ${\mathcal{R}}$ 
be the region of the sphere bounded by the $4$-cycle $u_2vu_{2+k}w_1$ that does not contain $u_1$. Consider $w_2$.
By the definition of $w_2$ and the minimality of $k$, $w_2$ lies in ${\mathcal{R}}$ and  it has at least one neighbor $u_j$ that does not lie in ${\mathcal{R}}$. The edge $w_2u_j$ must cross
the boundary of ${\mathcal{R}}$, which contradicts the planarity of
$G$. Thus, we have $\Delta(G) \leq \min\{|A|-1,|B|-1\}$, as claimed.

To prove the case \ref{case:b}, assume $|B|<|A|$, i.e. $n>2|B|$. We already know that $\Delta(G)\le |B|-1$. Assume that $A$ contains a vertex $v$ of degree $|B|-1$.
Since $|A|=n-|B|$ and all other vertices of $A$ have degree at least $3$, we have that $2n-4\ge (|B|-1)+3(n-|B|-1)=3n-2|B|-4$, 
so $n\le 2|B|$, a contradiction.

To prove the case \ref{case:c}, assume to the contrary that $G$ has $9$ vertices and partite sets $A,B$. We may assume $|B|<|A|$, and therefore $|B|\le 4$. Then every vertex in $A$ has degree at most $2$, a contradiction.
\end{proof}

\begin{lemma} \label{la:dist2q}
In a $3$-connected quadrangulation $G$ of order $n$, 
the number of unordered pairs of vertices at distance  2 is at most $$\frac{1}{2}\sum_v d^2(v)-4(n-2).$$
This estimate is exact precisely when $G$ has no non-facial $4$-cycles.
\end{lemma}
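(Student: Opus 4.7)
My plan is to convert the target inequality into a counting identity on length-$2$ paths, and then absorb the $-4(n-2)$ correction using the $n-2$ quadrangular faces. Since $G$ is bipartite, no two adjacent vertices share a common neighbor, so each length-$2$ path has its two endpoints at distance exactly $2$. Counting paths of length $2$ in two ways yields
\[ P_2 \;:=\; \sum_{v\in V(G)}\binom{d(v)}{2} \;=\; \sum_{\{x,y\}:\,d(x,y)=2} c(x,y), \]
where $c(x,y)$ is the number of common neighbors of $x,y$. Using $\sum_v d(v)=2e=2(2n-4)$, this becomes $P_2 = \tfrac{1}{2}\sum_v d^2(v) - 2(n-2)$, so the claim is equivalent to
\[ P_2 - N_2 \;=\; \sum_{\{x,y\}:\,d(x,y)=2}\bigl(c(x,y)-1\bigr) \;\ge\; 2(n-2) \;=\; 2f, \]
where $N_2$ counts distance-$2$ pairs and $f=n-2$ is the number of faces.

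Next I would bring in the facial structure. For each distance-$2$ pair $\{x,y\}$, let $D(x,y)$ be the number of faces of $G$ having $\{x,y\}$ as a diagonal. Each such face $xuyv$ contributes the two common neighbors $u,v$ of $\{x,y\}$; moreover, two distinct diagonal-faces of $\{x,y\}$ produce disjoint pairs of common neighbors, because if two such faces shared a common neighbor $u$ they would share the adjacent edges $xu$ and $uy$, which by the local rotation at $u$ would force the two faces to coincide. Hence $c(x,y)\ge 2D(x,y)$. Since every face has exactly two diagonals, $\sum_{\{x,y\}} D(x,y)=2f$, and so the number $N_D$ of pairs with $D(x,y)\ge 1$ satisfies $N_D\le 2f$. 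Combining these,
\[ P_2 - N_2 \;\ge\; \sum_{D(x,y)\ge 1}\bigl(2D(x,y)-1\bigr) \;=\; 4f - N_D \;\ge\; 2f, \]
giving the stated bound.

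For equality, all three estimates above must be tight, which forces: $c(x,y)=2D(x,y)$ whenever $D(x,y)\ge 1$, $c(x,y)=1$ whenever $D(x,y)=0$, and $D(x,y)\le 1$ everywhere. I would then verify that these three conditions are jointly equivalent to $G$ having no non-facial $4$-cycle. A non-facial $4$-cycle $abcd$ supplies a distance-$2$ pair $\{a,c\}$ with $c(a,c)\ge 2$ whose face-diagonal structure violates one of the conditions (the case-split is on $D(a,c)$). Conversely, if every $4$-cycle is facial, then for any distance-$2$ pair $\{x,y\}$ with $c$ common neighbors, each of the $\binom{c}{2}$ $4$-cycles through $\{x,y\}$ is a distinct face, so $D(x,y)\ge \binom{c}{2}$; combined with $c\ge 2D$, this forces $c\le 2$ and matches the three tightness conditions.

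The main obstacle I expect is the clean bookkeeping for the equality case: three tightness conditions have to combine to rule out exactly the non-facial $4$-cycles, with no leakage. Once the disjointness claim $c(x,y)\ge 2D(x,y)$ is set up, the inequality direction itself reduces to a routine double-counting argument.
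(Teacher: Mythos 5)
Your proof is correct and follows essentially the same route as the paper: count $2$-paths via $\sum_v\binom{d(v)}{2}$, charge an overcount of $2$ to each of the $n-2$ faces through its two diagonals, justify that these charges do not collide using the fact that two faces of a $3$-connected quadrangulation cannot share two consecutive boundary edges, and locate the remaining slack exactly in the non-facial $4$-cycles. Your version merely makes the paper's informal ``safely subtract $2(n-2)$'' step explicit via the inequality $c(x,y)\ge 2D(x,y)$ and the tightness bookkeeping, which is a welcome sharpening but not a different argument.
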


\begin{proof}  Euler's Formula gives us that any quadrangulation on $n$ vertices has $2(n-2)$ edges and $n-2$ faces.
The number of 2-paths in $G$ is equal to $\sum_v \binom{d(v)}{2}=\frac{1}{2}\sum_v d^2(v) -2(n-2)$. This sum, however, overcounts the number of pairs of vertices distance 2 apart. In a 
$3$-connected quadrangulation, two faces cannot share two consecutive edges from their boundaries. Thus, for each face, we are double counting the two pairs of vertices distance 2 apart, and so we may safely subtract $2(n-2)$. There are pairs of vertices which we have double counted even after the substraction precisely when there are non-facial $4$-cycles.
\end{proof}

	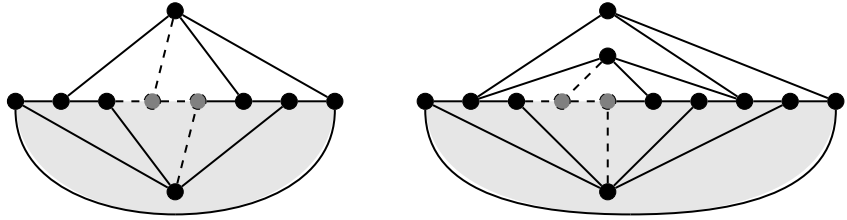
\begin{figure}[ht]
		\centering
			\begin{tikzpicture}
			[scale=0.6,inner sep=0.75mm, 
			vertex/.style={circle,draw}, 
			thickedge/.style={line width=0.75pt}] 
			
			
			\fill[gray!20] (7,3)--(0,3) to [out=-90,in=180] (3.5,.5) to [out=0,in=-90] (7,3);

			\node[vertex] (p1) at (3.5,5) [fill=black] {};
			\node[vertex] (p2) at (3.5,1) [fill=black] {};

			\node[vertex] (a1) at (0,3) [fill=black] {};
			\node[vertex] (a2) at (1,3) [fill=black] {};
			\node[vertex] (a3) at (2,3) [fill=black] {};
			\node[vertex,dashed] (a4) at (3,3) [fill=gray] {};
			\node[vertex,dashed] (a5) at (4,3) [fill=gray] {};
			\node[vertex] (a6) at (5,3) [fill=black] {};
			\node[vertex] (a7) at (6,3) [fill=black] {};
			\node[vertex] (a8) at (7,3) [fill=black] {};
			
			\draw[thickedge] (p1)--(a2);
			\draw[thickedge,dashed] (p1)--(a4);
			\draw[thickedge] (a1)--(a2)--(a3);
			\draw[thickedge,dashed] (a3)--(a4)--(a5);
			\draw[thickedge] (a5)--(a6)--(a7)--(a8);
			\draw[thickedge] (p1)--(a6);
			\draw[thickedge] (p1)--(a8);
			
			\draw[thickedge] (a1) to[out=-90,in=180] (3.5,.5);
			\draw[thickedge] (a8) to[out=-90,in=0] (3.5,.5);
			
			\draw[thickedge] (p2)--(a1);
			\draw[thickedge] (p2)--(a3);
			\draw[thickedge,dashed] (p2)--(a5);
			\draw[thickedge] (p2)--(a7);
			
			\end{tikzpicture}
			\qquad
			\begin{tikzpicture}
			[scale=0.6,inner sep=0.75mm, 
			vertex/.style={circle,draw}, 
			thickedge/.style={line width=0.75pt}] 
			
			
			
			\fill[gray!20] (9,3)--(0,3) to[out=-90,in=180] (4.5,.5) to [out=0,in=-90] (9,3);

			\node[vertex] (p1) at (4,4) [fill=black] {};
			\node[vertex] (p2) at (4,1) [fill=black] {};
	
			\node[vertex] (a1) at (0,3) [fill=black] {};
			\node[vertex] (a2) at (1,3) [fill=black] {};
			\node[vertex] (a3) at (2,3) [fill=black] {};
			\node[vertex,dashed] (a4) at (3,3) [fill=gray] {};
			\node[vertex,dashed] (a5) at (4,3) [fill=gray] {};
			\node[vertex] (a6) at (5,3) [fill=black] {};
			\node[vertex] (a7) at (6,3) [fill=black] {};
			\node[vertex] (a8) at (7,3) [fill=black] {};
			\node[vertex] (a9) at (8,3) [fill=black] {};
			\node[vertex] (a10) at (9,3) [fill=black] {};
			\node[vertex] (q1) at (4,5) [fill=black] {};

			\draw[thickedge] (p1)--(a2);
			\draw[thickedge,dashed] (p1)--(a4);
			\draw[thickedge] (a1)--(a2)--(a3);
			\draw[thickedge,dashed] (a3)--(a4)--(a5);
			\draw[thickedge] (a5)--(a6)--(a7)--(a8)--(a9)--(a10);
			\draw[thickedge] (p1)--(a6);
			\draw[thickedge] (p1)--(a8);

			\draw[thickedge] (a1) to[out=-90,in=180] (4.5,.5);
			\draw[thickedge] (a10) to[out=-90,in=0] (4.5,.5);

			\draw[thickedge] (a2)--(q1);
			\draw[thickedge] (a8)--(q1);
			\draw[thickedge] (a10)--(q1);		
			
			\draw[thickedge] (p2)--(a1);
			\draw[thickedge] (p2)--(a3);
			\draw[thickedge,dashed] (p2)--(a5);
			\draw[thickedge] (p2)--(a7);
			\draw[thickedge] (p2)--(a9);
			
			\end{tikzpicture}
			\caption{The quadrangulation $ Q_n^3 $ of order
			 $ n = 2k \geq 8$ (left) and $n=2k+1\geq 11$  (right),   which minimizes the Wiener index
			 among all $3$-connected quadrangulations of order $n$. Gray vertices and dashed edges indicate the pattern to be repeated. The light gray regions are the sunflower graphs around a maximum degree vertex. } 
			\label{fig:evenq3cminwi}
	\end{figure}

\begin{lemma} \label{la:optim} Let $Q$ be a $3$-connected quadrangulation of order $n$, with partite sets $A,B$. Then
\begin{equation} 
W(Q)  \geq  2n^2 +2n -8 - |A||B|- \sum_v d^2(v).  \label{tooptimize}
\end{equation}
Equality holds in {\upshape (\ref{tooptimize})} precisely when the diameter of $Q$ is at most $4$ and $Q$ has no non-facial $4$-cycles. 
\end{lemma}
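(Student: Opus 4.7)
The plan is to partition unordered vertex pairs by the parity of their distance (using bipartiteness) and then by exact distance, bounding each class by its minimum possible distance. Let $A,B$ be the partite sets, $n = |A|+|B|$, and let $a_k$ denote the number of unordered pairs at distance exactly $k$. Since $Q$ is bipartite, every $A$--$B$ pair has odd distance and every same-part pair has even distance, so
\[
\sum_{k \text{ odd}} a_k = |A||B|, \qquad \sum_{k \text{ even},\, k\ge 2} a_k = \binom{n}{2} - |A||B|.
\]
Also $a_1 = 2n-4$ (the number of edges, by Euler's formula), hence the number of $A$--$B$ pairs at distance $\ge 3$ equals $|A||B|-(2n-4)$, and the number of same-part pairs at distance $\ge 4$ equals $\binom{n}{2} - |A||B| - a_2$.

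Next I would write
\[
W(Q) = a_1 + 2 a_2 + \sum_{k\ge 3,\, k\text{ odd}} k\, a_k + \sum_{k\ge 4,\, k\text{ even}} k\, a_k
\]
and replace each $k$ in the tail sums by its minimum possible value ($3$ and $4$, respectively). This gives
\[
W(Q) \;\ge\; (2n-4) + 2a_2 + 3\bigl(|A||B|-(2n-4)\bigr) + 4\left(\binom{n}{2} - |A||B| - a_2\right),
\]
with equality exactly when the diameter is at most $4$. Routine algebra then collapses this to
\[
W(Q) \;\ge\; 2n^2 - 6n + 8 - |A||B| - 2a_2.
\]

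Finally I would plug in the bound from Lemma~\ref{la:dist2q}, namely $a_2 \le \tfrac{1}{2}\sum_v d^2(v) - 4(n-2)$, which turns $-2a_2$ into $-\sum_v d^2(v) + 8n - 16$, yielding exactly
\[
W(Q) \;\ge\; 2n^2 + 2n - 8 - |A||B| - \sum_v d^2(v).
\]
Equality in this last step is characterized by Lemma~\ref{la:dist2q} as absence of non-facial $4$-cycles, while equality in the distance-truncation step is diameter at most $4$; both must hold simultaneously for equality in \eqref{tooptimize}.

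There is no genuine obstacle here beyond careful bookkeeping: the only substantive input is the distance-$2$ count from Lemma~\ref{la:dist2q}, and the rest is a bipartite parity observation combined with an Euler-formula edge count. The mildly delicate point is making sure that the characterization of the equality case is stated correctly, i.e.\ noting that both conditions—diameter at most $4$ and no non-facial $4$-cycles—are independent and both required.
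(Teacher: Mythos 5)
Your proposal is correct and follows essentially the same route as the paper: both decompose $W(Q)$ by distance, use bipartiteness to identify odd/even distance pairs with cross/same-part pairs, truncate distances at $3$ and $4$, and then apply the distance-$2$ bound from Lemma~\ref{la:dist2q}, with the same two equality conditions. The only cosmetic difference is that you write the same-part pair count as $\binom{n}{2}-|A||B|$ where the paper uses $\binom{|A|}{2}+\binom{|B|}{2}$; these are equal.
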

\begin{proof}
Let $Q$ be an arbitrary $3$-connected quadrangulation on $n$ vertices, with partite sets $A,B$. Let $D_i$ denote the number of unordered pairs of vertices at distance $i$ in $Q$. Clearly $W(Q)=\sum_i i\cdot D_i$. Observe that $D_1=2n-4$, the number of edges; $D_2\leq \frac{1}{2}\sum_v d^2(v)-4(n-2)$ by Lemma~\ref{la:dist2q};
$D_2+ D_4+D_6+D_8\cdots = \binom{|A|}{2}+ \binom{|B|}{2}$, as pairs of vertices are at even distance precisely when they are from the same  partite set; and finally, $D_1+ D_3+D_5+D_7+\cdots = |A|\cdot |B|$, as pairs of vertices are at odd distance
precisely when they are from different partite sets.

Combining all this information with the identity $|A|+|B|=n$, we obtain that
\begin{eqnarray*} 
W(Q) & \geq & (2n-4)+2 D_2+3 \biggl[ |A|\cdot |B|-(2n-4)\biggl]+4\biggl[ \binom{|A|}{2}+ \binom{ |B|}{2}-D_2\biggl]\\
&=& 2n^2 -2n - |A||B|-2(D_2+2n-4)\\
&\geq& 2n^2 +2n -8 - |A||B|- \sum_v d^2(v).  
\end{eqnarray*}
The first inequality  in the displayed formula is an equality precisely when the diameter of $Q$ is at most $4$, and the second inequality is an equality precisely when $Q$ has no non-facial $4$-cycles.
\end{proof}
The $3$-connected quadrangulation $Q_n^3$ of order $n\geq 8$, $n\not=9$  is defined  in Figure ~\ref{fig:evenq3cminwi}. The following lemma is easy to verify, and we leave 
the details to the reader.
 \begin{lemma} \label{Qn3lista}
Assume that $n\geq 8$, $n\not=9$.
\begin{enumerate}[label={\upshape (\alph*)}]
\item\label{qstart} $Q_n^3$ is a $3$-connected quadrangulation.
\item  $Q_n^3$  has no non-facial 4-cycle.
\item \label{qevendegseq} If $n$ is even, $Q_n^3$ has diameter $3$ and degree sequence $\frac{n}{2}-1, \frac{n}{2}-1, 3,\ldots,3$.
\item \label{qodddegseq} If $n$ is odd, $Q_n^3$ has diameter $4$ and degree sequence $\lfloor\frac{n}{2}\rfloor-1,\lfloor\frac{n}{2}\rfloor-2,4,4,3,\ldots, 3$. (For $n=11$, the terms in this sequence are {\sl not} in decreasing order.)
\item  \label{fformula} \begin{equation*} 
    W(Q_n^{3}) =\begin{cases}
    \frac{5n^2}{4} -5n + 8, & \text{if $n$ is even}, \\
\frac{5n^2}{4} -3n - \frac{49}{4}, & \text{if $n$ is odd}. \\
\end{cases}
 \end{equation*}
\end{enumerate}
\end{lemma}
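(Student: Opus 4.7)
The plan is to verify parts (a)--(d) by direct inspection of the construction in Figure~\ref{fig:evenq3cminwi}, and then derive (e) by plugging the partite sizes and the sum of squared degrees into the equality case of Lemma~\ref{la:optim}.

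\emph{Setup.} Label the cycle as $a_1,\ldots,a_{n-2}$ (if $n$ is even) or $a_1,\ldots,a_{n-3}$ (if $n$ is odd), with poles $p_1,p_2$ and, in the odd case, an extra vertex $q_1$ joined exactly to $a_2$, $a_{n-5}$, and $a_{n-3}$. In either case $p_2$ is adjacent to every odd-indexed cycle vertex, and $p_1$ is adjacent to every even-indexed cycle vertex, except that in the odd case $p_1\not\sim a_{n-3}$ (whose ``slot'' on $p_1$ has been handed to $q_1$). The bipartition is $A=\{p_1\}\cup\{a_i:i\text{ odd}\}\cup(\{q_1\}\text{ if }n\text{ odd})$ and $B=\{p_2\}\cup\{a_i:i\text{ even}\}$, so $|A|\,|B|=n^2/4$ in the even case and $(n^2-1)/4$ in the odd case.

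For (a) and (b): first enumerate the faces around $p_1$, $p_2$, and (in the odd case) $q_1$ and verify that each of the $n-2$ faces is a quadrangle, so $Q_n^3$ is a quadrangulation. For $3$-connectedness check directly that no $2$-vertex cutset exists: removing two cycle vertices leaves the poles (and $q_1$) as hubs connecting the surviving fragments; removing a pole together with any other vertex leaves the surviving pole adjacent to a large block of cycle vertices, which together with surviving cycle edges span the remainder; removing both poles leaves the cycle connected (augmented harmlessly in the odd case by $q_1$ attached to three cycle vertices). For no non-facial $4$-cycle, enumerate for each pair $(x,z)$ of same-partite vertices the common-neighbor set $N(x)\cap N(z)$ and verify that it has size at most $2$, with size $2$ only when the resulting $4$-cycle is one of the enumerated faces. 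Only pairs involving $q_1$ require care; they produce precisely the three extra faces $p_1 a_2 q_1 a_{n-5}$, $q_1 a_2 a_1 a_{n-3}$, and $q_1 a_{n-5} a_{n-4} a_{n-3}$.

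For (c) and (d): the degrees follow directly from the adjacency description. The diameter is obtained from the same common-neighbor bookkeeping: in the even case every pair of non-adjacent same-partite vertices has a common neighbor, so the only pair at distance exceeding $2$ is $(p_1,p_2)$, which has distance exactly $3$. In the odd case, the pair $a_4,a_{n-3}\in B$ has empty common-neighbor set (the neighborhoods $\{a_3,a_5,p_1\}$ and $\{a_{n-4},a_1,q_1\}$ are disjoint), so the diameter is at least $4$; a direct check through the pole/$q_1$ hubs confirms that every pair is at distance at most $4$.

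For (e): by (b) and the diameter bounds from (c), (d), the inequality of Lemma~\ref{la:optim} holds with equality for $Q_n^3$. Using the degree sequences just established, one has
\[\sum_v d(v)^2 = 2\left(\tfrac n2-1\right)^2+9(n-2)\]
in the even case and
\[\sum_v d(v)^2 = \left(\lfloor n/2\rfloor-1\right)^2+\left(\lfloor n/2\rfloor-2\right)^2+32+9(n-4)\]
in the odd case. Substituting into~(\ref{tooptimize}) together with $|A|\,|B|$ and simplifying yields the stated formulas. The main bookkeeping obstacle is the odd case: tracking where the regular sunflower pattern around $p_1$ terminates at $a_{n-5}$ and $q_1$ takes over, and making sure that the $4$-cycle enumeration in (b) and the diameter check in (d) both account for the exceptional pairs involving $q_1$ and $a_{n-3}$.
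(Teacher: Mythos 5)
The paper offers no proof of this lemma --- it is explicitly left to the reader --- so there is nothing to compare against except the construction itself, and your verification plan is correct and complete in all essentials. Your description of $Q_n^3$ matches Figure~\ref{fig:evenq3cminwi} (in particular the odd-case adjacencies $q_1\sim a_2,a_{n-5},a_{n-3}$ and $p_1\not\sim a_{n-3}$), the degree sequences and partite sizes are right, and the arithmetic in part~\ref{fformula} checks out: in the even case $\sum_v d(v)^2=\frac{n^2}{2}+7n-16$ and $|A||B|=\frac{n^2}{4}$ give $\frac{5n^2}{4}-5n+8$, and in the odd case $\sum_v d(v)^2=2k^2+12k+10$ (agreeing with the value the paper itself computes inside Lemma~\ref{la:q3cwiodd}) and $|A||B|=k(k+1)$ give $\frac{5n^2}{4}-3n-\frac{49}{4}$. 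Routing part~\ref{fformula} through the equality case of Lemma~\ref{la:optim} rather than summing distances directly is a sensible economy and consistent with how the paper uses these facts downstream.

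One sentence should be repaired: in the even case it is false that ``the only pair at distance exceeding $2$ is $(p_1,p_2)$'' --- \emph{every} non-adjacent pair with one vertex in each partite class is at odd distance $\ge 3$, so there are many such pairs. This does not damage anything: your stated premise (every non-adjacent same-partite pair has a common neighbor) already implies diameter $3$, since for a cross pair $(u,w)$ any neighbor $z$ of $u$ satisfies $d(z,w)\le 2$; and Lemma~\ref{la:optim} only needs the diameter bound, not a census of which pairs realize distance $3$. Just delete or correct the parenthetical claim.
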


The following is obvious, and we make use of it frequently
\begin{lemma}\label{la:proc}Assume that $\sum_{i=1}^n x_i=a>0$ is given, where the $x_i$'s are required to be integers from the interval $[b,c]$ with $0\le b$, and we have to maximize $\sum_{i=1}^n x_i^2$.  
As long as for some $(i\ne j$) we have $b+1\le x_j\le x_i\le c-1$, we can increase the sum of squares while keeping the conditions by changing $x_i$ to $x_i+1$ and $x_j$ to $x_j-1$.
\end{lemma}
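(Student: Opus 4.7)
The statement is essentially a one-line algebraic verification, so the plan is simply to compute directly the effect of the proposed swap on the two quantities that matter: the sum $\sum_k x_k$ (which must remain equal to $a$ and with every entry in $[b,c]\cap\mathbb{Z}$) and the objective $\sum_k x_k^2$ (which must strictly increase). No exchange argument iteration, induction, or optimization machinery is needed; everything reduces to checking four simple inequalities.

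First I would verify feasibility of the new vector obtained by replacing $x_i$ with $x_i+1$ and $x_j$ with $x_j-1$. Integrality is immediate since we are adding $\pm 1$ to integers. The sum changes by $(+1)+(-1)=0$, so it still equals $a$. The interval constraint on the incremented coordinate uses precisely the hypothesis $x_i\le c-1$, giving $x_i+1\le c$; the constraint on the decremented coordinate uses the hypothesis $x_j\ge b+1$, giving $x_j-1\ge b$. All remaining coordinates are untouched, so the new tuple lies in $[b,c]^n$. Then I would compute
\[
\bigl[(x_i+1)^2+(x_j-1)^2\bigr]-\bigl[x_i^2+x_j^2\bigr]=2x_i-2x_j+2=2(x_i-x_j)+2,
\]
which, together with the hypothesis $x_j\le x_i$, is at least $2>0$. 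Hence $\sum_k x_k^2$ strictly increases, completing the proof.

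There is no real obstacle here; the only point worth noting is that the hypothesis is stated as $b+1\le x_j$ and $x_i\le c-1$ rather than as $b\le x_j$ and $x_i\le c$ precisely to provide the slack needed in the two interval checks above. The result will be applied in the subsequent sections as a greedy improvement step: if a feasible degree sequence (or partition of a fixed total) is not of the form having one entry as large as possible and the rest as small as possible, one can apply the lemma to strictly enlarge $\sum d(v)^2$, which via Lemma~\ref{la:optim} strictly decreases the lower bound on $W(Q)$, eventually forcing the extremal structure to be the one pictured in Figure~\ref{fig:evenq3cminwi}.
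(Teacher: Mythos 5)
Your proof is correct and is exactly the direct computation the paper has in mind: the paper states this lemma as obvious and gives no proof, and your verification (feasibility via the slack hypotheses $x_j\ge b+1$, $x_i\le c-1$, plus the increase $2(x_i-x_j)+2\ge 2>0$ in the sum of squares) supplies precisely the missing details. Nothing further is needed.
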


\begin{theorem} \label{q3cminwieven} Assume that the number $n\geq 8$ is even.
    The quadrangulation $ Q_n^3 $ defined in {\upshape Figure ~\ref{fig:evenq3cminwi}} minimizes the Wiener index among all $3$-connected quadrangulations of order $n$.
     Moreover, up to isomorphism, this minimizer is unique. 
\end{theorem}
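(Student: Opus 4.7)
The plan is to use the lower bound from Lemma~\ref{la:optim} and maximize its right-hand side under the available constraints, then apply the rigidity of the sunflower structure from Lemma~\ref{la:sunflower} to identify the unique extremal graph.

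After applying Lemma~\ref{la:optim}, bounding $W(Q)$ from below reduces to bounding $|A||B|+\sum_v d^{2}(v)$ from above. Since $|A|+|B|=n$, AM--GM gives $|A||B|\leq n^{2}/4$, with equality iff $|A|=|B|=n/2$. For the degree sum, Lemma~\ref{la:maxdegreeq3c}\ref{case:a} yields $\Delta:=\Delta(Q)\leq \min(|A|,|B|)-1\leq n/2-1$; combining $\sum_v d(v)=4(n-2)$ with the pointwise estimate $d(v)^{2}=d(v)(d(v)-3)+3d(v)\leq \Delta(d(v)-3)+3d(v)$ (valid because $3\leq d(v)\leq \Delta$), I obtain
\[
\sum_v d^{2}(v)\leq \Delta(n-8)+12(n-2)\leq (n/2-1)(n-8)+12(n-2)=\tfrac{n^{2}}{2}+7n-16.
\]
Plugging both estimates into Lemma~\ref{la:optim} and comparing with Lemma~\ref{Qn3lista}\ref{fformula} yields
\[
W(Q)\geq 2n^{2}+2n-8-\tfrac{n^{2}}{4}-\tfrac{n^{2}}{2}-7n+16=\tfrac{5n^{2}}{4}-5n+8=W(Q_n^{3}).
\]

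For uniqueness I will analyze the equality case. Equality throughout forces $|A|=|B|=n/2$, $\Delta=n/2-1$, every vertex of degree in $\{3,n/2-1\}$, no non-facial $4$-cycles, and diameter at most $4$. The part-wise degree sums $\sum_{v\in A}d(v)=\sum_{v\in B}d(v)=2(n-2)$ then force, for $n\geq 10$, exactly one vertex of degree $n/2-1$ in each partite class, call them $v\in A$ and $v'\in B$. I construct $v$'s sunflower $S_v$ via Lemma~\ref{la:sunflower}: it contains $n-1$ distinct vertices, namely all of $A$ together with all but one vertex of $B$. The alternative placement of $v'$ at some neighbor $u_j$ of $v$ is ruled out: if $d(u_j)=n/2-1$ then $u_j$ would have an extra neighbor $w_k$ with $k\notin\{j-1,j\}$, producing the $4$-cycle $v u_j w_k u_k$, which cannot be facial since the only faces containing $v$ are the faces $v u_i w_i u_{i+1}$ inside $\mathcal{R}_v$. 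Hence $v'=z$ lies outside $\mathcal{R}_v$; planarity forbids $zv\in E(Q)$, so all $n/2-1$ neighbors of $z$ must be exactly $w_1,\ldots,w_{n/2-1}$. The outer region is then uniquely quadrangulated by the $4$-faces $zw_i u_{i+1}w_{i+1}$, giving $Q\cong Q_n^{3}$. For $n=8$ every degree equals $3=n/2-1$, and the identical sunflower argument recovers the $3$-cube.

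The main technical point is ensuring that the three individually tight upper bounds on $|A||B|$, on $\Delta$, and on $\sum_v d^{2}(v)$ combine exactly to the value in Lemma~\ref{Qn3lista}\ref{fformula}, leaving no slack that could accommodate a different extremal partition or degree sequence. The delicate part of uniqueness is excluding the alternative placement of the second maximum-degree vertex inside $v$'s sunflower; this relies crucially on the no-non-facial-$4$-cycles condition that comes for free from the equality discussion in Lemma~\ref{la:dist2q}. Once those are in hand, the uniqueness of the planar embedding of $3$-connected planar graphs guarantees that the structural data determine $Q$ up to isomorphism.
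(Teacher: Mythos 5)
Your proposal is correct and follows essentially the same route as the paper: the lower bound of Lemma~\ref{la:optim}, maximization of $|A||B|+\sum_v d^2(v)$ under $\Delta(Q)\le n/2-1$ from Lemma~\ref{la:maxdegreeq3c}, and a sunflower-based analysis of the equality case. The only (harmless) deviations are that you bound $\sum_v d^2(v)$ by a pointwise convexity estimate where the paper invokes the smoothing Lemma~\ref{la:proc}, and you exclude a high-degree $u_j$ via an explicit non-facial $4$-cycle $vu_jw_ku_k$ where the paper argues through the facial region $v'w_1u_2w_2$; both yield the same conclusion.
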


\begin{proof} Let $Q$ be an arbitrary $3$-connected quadrangulation on $n=2k$ vertices, with partite sets $A,B$.
Since $Q$ is $3$-connected, for all $v$, we have $ d(v)\geq 3$, and by Lemma~\ref{la:maxdegreeq3c}, $d(v)\le \Delta(Q)\le\min(|A|-1,|B|-1)\le \frac{n}{2}-1$. 
By Lemma~\ref{la:proc}  and Lemma~\ref{Qn3lista}~\ref{qevendegseq}, $\sum_{v\in V(Q)}d^2(v)\le\sum_{v\in V(Q_n^3)}d^2(v)$ with equality precisely when $Q$ has the same degree sequence as
$Q_n^3$. 
Also, $|A|\cdot|B|\le\frac{n^2}{4}$ with equality precicely when
$|A|=|B|=\frac{n}{2}$.
Lemma~\ref{la:optim} gives that $W(Q)\ge W(Q_n^3)$ with equality precisely when $Q$ has the same degree sequence as
$Q_n^3$, $|A|=|B|=\frac{n}{2}$, $Q$ has diameter at most $4$ and no nonfacial $4$-cycles. In particular, $Q_n^3$ minimizes the Wiener index among $n$-vertex $3$-connected quadrangulations.

We will show that the extremal quadrangulation is in fact unique. Assume that $W(Q)=W(Q_n^3)$, so $Q$ has the same degree sequence as $Q_n^3$ and $|A|=|B|=k$ vertices. Then in both $A$ and $B$ we have 
$k-1$ vertices of degree $3$, and the remaining  one vertex must have degree $k-1$  ($k-1\ge 3$). 

As before, let $v$ be a vertex of maximum degree $k-1$, and construct the sunflower graph $S_v$ around $v$. Since $S_v$ has exactly $n-1$ vertices, $Q$ has one additional vertex $v'$. 
This vertex $v'$ is in the same partite class as the $u_i$ vertices, and differs from $v$. Each of the $w_i$ has one edge not in $S_v$ incident upon it, connecting them to either $v'$ or one of the $u_j$.
If all vertices $u_i$ have degree $3$, then $v'$ has degree $k-1\ge 3$, and it is adjacent to all $w_i$ (in which case we have $Q_n^3$). Otherwise the degree of $v'$ is $3$ and exactly one of the $u_i$ 
(say $u_2$) has degree $k-1>3$ in $Q$.
Assume that the latter is the case.  
As $w_1$ and $w_2$ have an edge not in $E(S_v)$ incident upon them, and $w_1u_2,w_2u_2\in E(S_v)$, both $w_1$ and $w_2$ are adjacent to $v'$. 
Thus, $v'w_1u_2w_2$ bounds a facial region $\mathcal{R}$. As $w_1u_2w_2$, of which $u_2$ is an internal vertex, is the common boundary of $\mathcal{R}_v$ and $\mathcal{R}$, 
$u_2$ cannot have any edge outside of $S_v$ incident upon it, a contradiction.
\end{proof}

\begin{lemma} \label{la:q3cwiodd} Assume $n=2k+1$, 
and let $Q$ be a $3$-connected quadrangulation of order $n$,
with partite sets $A,B$. If
\begin{equation}
\sum_{v\in V(Q)}d^2(v)< 2k^2+12k+10, \label{keymin} 
\end{equation}
then $W(Q)>W(Q_n^3)$. 
If $\Delta(Q)\le k-2$, then $W(Q)>W(Q_n^3)$.
\end{lemma}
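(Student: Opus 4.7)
The plan is to prove the two statements in order, using the first as a stepping stone for the second.

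For the first statement, apply Lemma~\ref{la:optim} directly. Substituting $n=2k+1$ gives $2n^2+2n-8=8k^2+12k-4$, and the constraint $|A|+|B|=2k+1$ forces $|A||B|\le k(k+1)$ (with maximum at $|A|=k$, $|B|=k+1$). Combining these with the hypothesis and Lemma~\ref{Qn3lista}~\ref{fformula} yields
\[
W(Q) \ge 7k^2+11k-4-\sum_v d^2(v) > 7k^2+11k-4-(2k^2+12k+10) = 5k^2-k-14 = W(Q_n^3).
\]

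For the second statement, the plan is to bound $\sum_v d^2(v)$ under the degree constraints and invoke the first statement. Since $Q$ is $3$-connected we have $\delta(Q)\ge 3$, so all degrees lie in $[3,k-2]$ and sum to $2(2n-4)=8k-4$ over $2k+1$ vertices. By Lemma~\ref{la:proc}, the maximum of $\sum d^2(v)$ under these constraints is attained at a configuration consisting of $a$ copies of $k-2$, one intermediate value $m\in[3,k-2]$, and $2k-a$ copies of $3$, subject to $a(k-5)+m=2k-4$. A short case analysis on integer $(a,m)$ handles the various $k$: for $k=5$ no valid degree sequence exists and the claim is vacuous; for $k\in\{6,7\}$ the extremal $\sum d^2(v)$ is strictly less than $2k^2+12k+10$; and for $k\ge 9$ the unique solution $(a,m)=(2,6)$ yields $\sum d^2(v)\le 2k^2+10k+26<2k^2+12k+10$. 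In each of these cases the first statement closes the argument.

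The main obstacle is the boundary case $k=8$ ($n=17$), where the extremal sequence $(6,6,6,3,\ldots,3)$ saturates $\sum_v d^2(v)=234=2k^2+12k+10$, so the reduction to the first statement breaks down. The plan is to use additional structural constraints. Lemma~\ref{la:maxdegreeq3c}~\ref{case:b} (applied to both orderings of the partite sets) restricts $\{|A|,|B|\}$ to $\{7,10\}$ or $\{8,9\}$; when $\{|A|,|B|\}=\{7,10\}$ we have $|A||B|=70$, which alone sharpens Lemma~\ref{la:optim} to $W(Q)\ge 8k^2+12k-4-70-234=300>W(Q_n^3)$. In the remaining case $\{|A|,|B|\}=\{8,9\}$, the plan is to invoke the sunflower lemma (Lemma~\ref{la:sunflower}) at each of the three degree-$6$ vertices: each sunflower captures $13$ of the $17$ vertices, so the three sunflowers overlap heavily, and a careful analysis of this overlap should produce either a non-facial $4$-cycle or a pair of vertices at distance at least $5$, activating strict inequality in one of the two bounds inside the proof of Lemma~\ref{la:optim}. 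This structural step for the $k=8$ case is the hardest part of the proof.
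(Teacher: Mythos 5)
Your first statement is proved exactly as in the paper: the computation $W(Q)\ge 2n^2+2n-8-|A||B|-\sum_v d^2(v)\ge 7k^2+11k-4-\sum_v d^2(v)$ together with $W(Q_n^3)=5k^2-k-14$ is correct. Your reduction of the second statement to the first via Lemma~\ref{la:proc} is also the paper's route: the extremal-sequence analysis for $k\in\{6,7\}$ and $k\ge 9$ checks out, and you correctly isolate $k=8$ with degree sequence $(6,6,6,3,\ldots,3)$, where $\sum_v d^2(v)=234=2k^2+12k+10$ saturates the bound, as the only place the reduction fails. Your observation that the sub-case $\{|A|,|B|\}=\{7,10\}$ falls to the sharper product bound $|A||B|=70$ (giving $W(Q)\ge 604-70-234=300>298=W(Q_{17}^3)$) is a clean shortcut; the paper obtains the same restriction to $|A|=9$, $|B|=8$ from the equality conditions of Lemma~\ref{la:optim} combined with Lemma~\ref{la:maxdegreeq3c}.

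The gap is the remaining sub-case: $k=8$, degree sequence $(6,6,6,3,\ldots,3)$, $\{|A|,|B|\}=\{8,9\}$. Here you only state a plan (``a careful analysis of this overlap should produce either a non-facial $4$-cycle or a pair of vertices at distance at least $5$'') without carrying it out, and this is precisely where the bulk of the work in the lemma lies; without it the lemma is not proved. Your target is the right one---violating an equality condition of Lemma~\ref{la:optim}---but merely observing that three $13$-vertex sunflowers overlap inside a $17$-vertex graph does not by itself yield a contradiction. For comparison, the paper argues as follows: if $W(Q)\le W(Q_{17}^3)$, equality must hold throughout Lemma~\ref{la:optim}, so $Q$ has no non-facial $4$-cycles and diameter at most $4$. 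Since the degree sum over each partite class equals the edge count $30$, the class $B$ of size $8$ contains exactly two degree-$6$ vertices and $A$ exactly one. Taking $v\in B$ of degree $6$ and its sunflower $S_v$, the four vertices outside are one $x\in B$ and $y_1,y_2,y_3\in A$; since at most one $u_i$ can carry an edge leaving $S_v$ (such a $u_i$ would have degree at least $4$, hence $6$, and $A$ has only one such vertex), $x$ must have degree $3$. One then splits on whether $N(x)=\{y_1,y_2,y_3\}$ (a second sunflower $S_x$ forces three distinct vertices $w_{j}$ of degree at least $4$, too many for $B$) or $N(x)=\{u_1,y_1,y_2\}$ (the two extra edges at the unique degree-$6$ vertex $u_1$ of $A$ are shown to have no face in which they can be drawn). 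An argument of this kind must be supplied before your proof is complete.
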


\begin{proof}
First note that by Lemma~\ref{Qn3lista}~\ref{qodddegseq}
\begin{eqnarray*}
\sum_{x\in V(Q_n^3)}d^2(x)&=&(k-1)^2+(k-2)^2+2\cdot 4^2+3^2(2k-3)
=2k^2+12k+10,
\end{eqnarray*}
and also $|A||B|\le k(k+1)$ (note that $k,k+1$ are the sizes of the partite classes in $Q_n^3$), so if (\ref{keymin}) holds, then by Lemma~\ref{la:optim} and Lemma~\ref{Qn3lista}~\ref{qodddegseq} we have $W(Q)>W(Q_n)$.

Since $Q$ has odd number of vertices and minimum degree $3$, the Handshaking Lemma implies $\Delta(Q)\ge 4$.
Assume now that $\Delta(Q)\le k-2$, so $k\ge 6$.
Let $x_1,\ldots,x_{2k+1}$ be a sequence of integers that maximizes 
$\sum x_i^2$ subject to the conditions that that $\sum x_i=4(n-2)$ and $3\le x_i\le k-2$ .
If $k=6$, the maximizing sequence is $4,4,4,4,4,3,\ldots,3$ of length 13, and if $k=7$ the maximizing sequence by Lemma~\ref{la:proc} is $5,5,5,4,3,\dots 3$ of length 15. In both of these cases, we have  
$\sum x_i^2<2k^2+12k+10$.
For $k\ge 8$, Lemma~\ref{la:proc} gives that the maximizing  sequence is $k-2,k-2,6,3,3,\ldots,3$, so 
$$\sum_{x\in V(Q)}d^2(x)\le \sum_{i=1}^{2k+1}x_i^{2}=2(k-2)^2+6^2+3^2(2k-2)=2k^2+10k+26\le 2k^2+12k+10.$$
Therefore $W(Q)> W(Q_n^3)$ unless $k=8$ and the degree sequence of $Q$ is $6,6,6,3,3,\ldots,3$.

So for the rest of this proof $k=8$, the degree sequence of $Q$ is $6,6,6,3,3,\ldots,3$ and is of length $17$.
By Lemma~\ref{la:optim} if $W(Q)\le W(Q_n^3)$, then $W(Q)=W(Q_{17}^3)$,
the diameter of $Q$ is $4$, $Q$ has no nonfacial $4$-cycles, $|A|=9$ and $|B|=8$. We will show that such a $Q$ does not exist, which finishes the proof.

Because the sum of the degrees of the vertices in each partite class must be the same (in this case, $30$), 
$B$ contains exactly two of the degree $6$ vertices. 
Let $v\in B$ with degree $6$, consider the sunflower $S_v$, and label the $4$ vertices outside $S_v$ by $x,y_1,y_2,y_3$ such that
$B=\{v,w_1,\ldots,w_6,x\}$ and $A=\{u_1,\ldots,u_6,y_1,y_2,y_3\}$. Since $d(x)\in\{3,6\}$, $N(x)\subseteq A$ and at most one of the $u_i$ has
an edge not from $S_v$ incident upon it, we have $d(x)=3$ and without loss of generality $y_1,y_2\in N(x)$.

Assume first that $N(x)=\{y_1,y_2,y_3\}$ and consider the sunflower $S_x$.  Let $j_i$ be chosen such that $w_{j_i}$ is the common neighbor of $y_i$ and $y_{i+1}$ (indices taken modulo $3$) in
$S_x$. Then each of the $w_{j_i}$ are different and have degree at least $4$ in $Q$, a contradiction.

So we can assume without loss of generality that $N(x)=\{u_1,y_1,y_2\}$. 
Then the unique degree $6$ vertex in $A$ is $u_1$, so there are two different indices $t_1$ and $t_2$ such that $w_{t_i}\in N(u_1)\setminus\{w_1,w_6\}$. 
For $i\in\{1,2\}$ let $z_i$ be the common neighbor of $u_1$ and $y_i$ in the sunflower $S_x$, and let $z_3$ be the common neighbor of $y_1$ and $y_2$ in $S_x$. 
Then $\{z_1,z_2\}\subseteq \{w_1,w_2,w_{t_1},w_{t_2}\}$ and $z_3\in \{w_1,\ldots,w_6\}\setminus\{z_1,z_2\}$. In particular, the degree of $z_3$ in $Q$ is at least $4$, therefore $z_1$ and $z_2$ must have degree
$3$ in $Q$.
If
$w_{t_i}\in\{z_1,z_2\}$, then $w_{t_i}$ has degree at least $4$ and consequently degree $6$. This gives  $\{z_1,z_2\}\cap\{w_{t_1},w_{t_2}\}=\emptyset$. 
Therefore without loss of generality $w_1=z_1$, $w_6=z_2$ and the $u_1w_{t_i}$ edges
cannot run inside $\mathcal{R}_v$ or any of the faces bounded the $4$-cycles $u_1w_1y_1xu_1$ and
$u_1w_6y_2xu_1$, which leaves them no place to be, a contradiction.
\end{proof}

\begin{theorem} \label{q3cminwiodd} Assume that the number $n\geq 11$ is odd.
    The quadrangulation $ Q_n^3 $ in {\upshape Figure ~\ref{fig:evenq3cminwi}} minimizes the Wiener index among all $3$-connected quadrangulations of order $n$.
     Moreover, up to isomorphism, this minimizer is unique. 
\end{theorem}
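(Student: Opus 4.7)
The strategy mirrors the proof of Theorem~\ref{q3cminwieven} but requires additional steps because the odd case admits more candidate degree sequences. Let $Q$ be a $3$-connected quadrangulation of order $n = 2k+1$ with $W(Q) \le W(Q_n^3)$ and partite sets $A, B$. Combining Lemma~\ref{la:q3cwiodd} with Lemma~\ref{la:maxdegreeq3c}\ref{case:a}--\ref{case:b} yields $\{|A|, |B|\} = \{k, k+1\}$, $\Delta(Q) = k-1$, and any vertex $v$ of degree $k-1$ lies in the smaller class $B$. Since each $A$-vertex has degree at most $k-2$, a direct extremal computation gives $\sum_v d^2(v) \le 2k^2 + 12k + 12$, so the slack in the lower bound of Lemma~\ref{la:optim} is at most $2$; in particular $Q$ contains no non-facial $4$-cycle.

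Form the sunflower $S_v$ around $v$ and let $x_1, x_2 \in A$ be the two vertices outside $V(S_v)$. Because $N(v) = \{u_1, \ldots, u_{k-1}\}$, every neighbor of $x_1$ or $x_2$ lies in $\{w_1, \ldots, w_{k-1}\}$. The $(2n-4) - 3(k-1) = k+1$ edges of $Q$ outside $S_v$ split as $\alpha + \beta := d(x_1) + d(x_2)$ edges of the form $x_j w_i$, plus $T := k+1 - \alpha - \beta$ edges of the form $u_i w_j$ with $j \notin \{i-1, i\}$. Any such extra edge produces a non-facial $4$-cycle $u_i v u_j w_j$, so $T = 0$: every $u_i$ has degree $3$, $\alpha + \beta = k+1$, and $d(w_i) = 2 + f_i$, where $f_i := |\{x_j : x_j \sim w_i\}| \le 2$.

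Since $d(w_i) \le 4$ and $\sum_i d(w_i) = 3k-1$, the $w$-degree multiset is forced to be $\{4, 4, 3, \ldots, 3\}$, giving $\sum_B d^2 = k^2 + 7k + 6$ and hence $\sum_A d^2 \ge k^2 + 5k + 4$. Since $k^2 + 5k + 4$ is the maximum possible value of $\sum_A d^2$, the $A$-degrees must be $\{k-2, 3, \ldots, 3\}$. Solving $\alpha + \beta = k+1$ with $\alpha^2 + \beta^2 = k^2 - 4k + 13$ gives $\{\alpha, \beta\} = \{k-2, 3\}$, so assume $d(x_1) = k-2$ and $d(x_2) = 3$. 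Then $x_1$ is adjacent to every $w_i$ except one, call it $w_c$; its $k-2$ edges partition the region outside $\mathcal{R}_v$ into $k-2$ wedges. Every wedge other than the one whose outer arc passes through $w_c$ exposes only its two corner $w$-vertices to its interior, which cannot accommodate $x_2$ of degree $3$. Hence $x_2$ lies in that distinguished wedge, forcing $N(x_2) = \{w_{c-1}, w_c, w_{c+1}\}$ and $Q \simeq Q_n^3$. The main obstacle is the $w$-degree bound, which relies on the chain ``no non-facial $4$-cycle $\Rightarrow T = 0 \Rightarrow f_i \le 2 \Rightarrow d(w_i) \le 4$''; without this, the $B$-sequence $(k-1, 5, 3, \ldots, 3)$, which achieves the absolute maximum of $\sum d^2$, would remain a competitor to the $Q_n^3$ sequence.
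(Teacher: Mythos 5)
Your endgame is a genuinely different (and shorter) route than the paper's: once ``no non-facial $4$-cycle'' is available, the chain $T=0\Rightarrow d(u_i)=3\Rightarrow d(w_i)\le 4$ eliminates the competitor $B$-sequence $(k-1,5,3,\ldots,3)$ in a few lines, where the paper instead runs a long structural case analysis culminating in the graph $Y$ of Figure~\ref{fig:YZ}. But the step ``the slack in the lower bound of Lemma~\ref{la:optim} is at most $2$; in particular $Q$ contains no non-facial $4$-cycle'' is a genuine gap, and it sits exactly where the difficulty of the odd case lives. Lemma~\ref{la:dist2q} only guarantees that a non-facial $4$-cycle makes the bound on $D_2$ strict, i.e.\ lowers it by at least $1$, which through Lemma~\ref{la:optim} raises the lower bound on $W(Q)$ by at least $2$. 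Your universal estimate $\sum_v d^2(v)\le 2k^2+12k+12$ leaves slack exactly $2$, attained precisely by the sequences $B:(k-1,5,3,\ldots,3)$, $A:(k-2,3,\ldots,3)$; for a quadrangulation with those degrees and a non-facial $4$-cycle you only obtain $W(Q)\ge W(Q_n^3)$, which contradicts neither minimality nor uniqueness. This configuration is not vacuous: $Y$ realizes exactly these degree sequences together with non-facial $4$-cycles on $13$ vertices, and the paper must identify it and compute $W(Y)=164>160=W(Q_{13}^3)$ to dispose of it. As you concede in your last sentence, without ``no non-facial $4$-cycle'' your $w$-degree bound, and with it the exclusion of $(k-1,5,3,\ldots,3)$, collapses.

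The gap is repairable inside your framework, but it requires a strengthening of Lemma~\ref{la:dist2q} that neither you nor the paper states: a non-facial $4$-cycle $xz_1yz_2$ has two \emph{distinct} diagonal pairs $\{x,y\}$ and $\{z_1,z_2\}$, one in each partite class, and since the faces having a fixed pair as diagonal form a matching on its common neighbors (two such faces sharing a common neighbor would share two consecutive boundary edges), \emph{each} of the two pairs is overcounted by at least one beyond the facial count. Hence $D_2\le\frac{1}{2}\sum_v d^2(v)-4(n-2)-2$, the Wiener lower bound improves by $4$, and $W(Q)\ge W(Q_n^3)+2$, which is the contradiction you need. With that lemma supplied, the rest of your argument (the edge count giving $T=0$, the forced degree multisets on $A$ and $B$, and the wedge argument pinning $N(x_2)=\{w_{c-1},w_c,w_{c+1}\}$) is correct and yields both minimality and uniqueness; as written, however, the key claim is asserted rather than proved.
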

\noindent 

\begin{proof}
Let $n=2k+1$ and assume that $Q$ is a $3$-connected quadrangulation on $n$ vertices of minimum Wiener index, and with partite sets $A,B$, such that $|A|>|B|$.

First we want to show that $|A|=k+1$, $|B|=k$, and the degree sequence of $Q$ restricted to the partite sets is the same as the degree sequence of $Q_n^3$ restricted to its partite sets.

We have $|A|\geq k+1$ and $|B|\leq k$.
Lemma~\ref{la:maxdegreeq3c}~\ref{case:a} gives $\Delta(Q)\leq |B|-1\leq k-1$. Lemma~\ref{la:q3cwiodd}  gives $\Delta(Q)=k-1$, 
which in turn shows $|B|=k$ and $|A|=k+1$. In addition, if $d(v)=k-1$, Lemma~\ref{la:maxdegreeq3c}~\ref{case:b} gives $v\in B$.

As the degree sequence of quadrangulations is unique for $n=11$ under the condition that every degree is $3$ or $4$, we may assume now that $n\ge 13$, i.e., $k\ge 6$.
As the sum of the $k$ degrees in $B$ is the number of edges $2n-4=4k-2$, and every  degree is at least 3, only two degree sequences are possible for $B$:
$(k-1,5,3,3,\ldots,3)$ or $(k-1,4,4,3,\ldots, 3)$. We claim that $\Delta(A)$, the maximum degree of a vertex in $A$ is $k-2$. Lemma~\ref{la:maxdegreeq3c}~\ref{case:b}
showed $\Delta(A)\leq |B|-2=k-2$. 

Assume for contradiction that $\Delta(A)\leq k-3$. Since the minimum degree is at least $3$ and  $\sum_{x\in A} d(x)=4k-2$, for $k=6$ we get
that $3\cdot 7=4\cdot 6-2$, a contradiction. Therefore we have that $k\ge 7$,
$$\sum_{x\in A}d^2(x)\leq (k-3)^2+4^2+3^2(k-1)=k^2+3k+16,$$
and
\begin{eqnarray*}
\sum_{x\in V(Q)}d^2(x)&\leq &k^2+3k+16+(k-1)^2+5^2+3^2(k-2)=2k^2+10k+24.\\
\end{eqnarray*}
By Lemma~\ref{la:q3cwiodd} $W(Q)>W(Q_n^3)$ when $k\ge 8$ so we may assume that $k=7$. In particular, for $k\ge 8$ the degree sequence of $A$ is $(k-2,3,\ldots,3)$.

If $k=7$, the degree sequence of $A$ is $(4,4,3,3,3,3,3,3)$ and the degree sequence of $B$ is $(6,4,4,3,3,3,3)$ then
$
\sum_{x\in V(Q)} d^2(x) =190<192=2\cdot7^2+12\cdot 7+10,
$
and Lemma~\ref{la:q3cwiodd}  contradicts the minimality of the Wiener index of $Q$. 

Hence the only case that remains to be checked is when $k=7$, the degree sequence of $A$ is $(4,4,3,3,3,3,3,3)$ and the degree sequence of $B$ is $(6,5,3,3,3,3,3)$. 
In this case 
$
\sum_{x\in V(Q)} d^2(x) =192=\sum_{x\in V(Q_{15}^3)} d^2(x)
$, so by Lemma~\ref{la:optim} and Lemma~\ref{Qn3lista}~\ref{qstart},~\ref{qodddegseq} the minimality of $W(Q)$ implies that $Q$ has no nonfacial $4$-cycles.
Let $v\in B$, $d(v)=6$ and consider the sunflower $S_v$. Let $x,y$ be the vertices outside $S_v$. Then $B=\{v,w_1,\ldots,w_{6}\}$ and
$A=\{u_1,\ldots,u_{6},x,y\}$, without loss of generality $d(w_1)=5$, and the rest of the $w_i$ have degree $3$. Therefore there is an $i\in\{3,4,5,6\}$ such that $w_1$ is adjacent to $u_i$. Since $w_1$ and $u_i$
cuts $C_v$ into two paths, one contains $w_2$ and the other $w_6$, the vertices $w_2$ and $w_6$ lie inside two different regions bounded by the $4$-cycle 
$w_1u_jvu_1w_1$. As this cycle is nonfacial, we have a contradiction.

	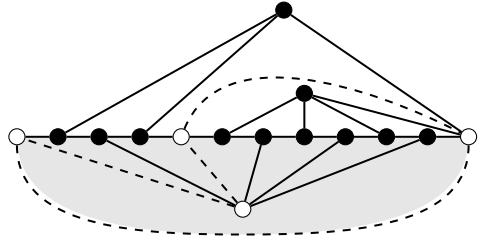
\begin{figure}[ht]
		\centering
			\begin{tikzpicture}
			[scale=0.6,xscale=.9,yscale=.8,inner sep=0.75mm, 
			vertex/.style={circle,draw}, 
			thickedge/.style={line width=0.75pt}] 

			\fill[gray!20] (0,3)--(11,3) to [out=-90, in=0] (5.5,.3) to [out=180,in=-90] (0,3);
			\node[vertex] (p1) at (7,4.2) [fill=black] {};
			\node[vertex] (p2) at (5.5,1) [fill=white] {};			
			\node[vertex] (z1) at (6.5,6.5) [fill=black] {};

			\node[vertex] (a1) at (0,3) [fill=white] {};
			\node[vertex] (a2) at (1,3) [fill=black] {};
			\node[vertex] (a3) at (2,3) [fill=black] {};
			\node[vertex] (a4) at (3,3) [fill=black] {};
			\node[vertex] (a5) at (4,3) [fill=white] {};
			\node[vertex] (a6) at (5,3) [fill=black] {};
			\node[vertex] (a7) at (6,3) [fill=black] {};
			\node[vertex] (a8) at (7,3) [fill=black] {};
			\node[vertex] (a9) at (8,3) [fill=black] {};
			\node[vertex] (a10) at (9,3) [fill=black] {};
			\node[vertex] (a11) at (10,3) [fill=black] {};
			\node[vertex] (a12) at (11,3) [fill=white] {};
			
			\draw[thickedge] (p1)--(a12);
			\draw[thickedge] (p1)--(a10);
			\draw[thickedge] (a1)--(a2)--(a3);
			\draw[thickedge] (a3)--(a4)--(a5);
			\draw[thickedge] (a5)--(a6)--(a7)--(a8)--(a9)--(a10)--(a11)--(a12);
			\draw[thickedge] (p1)--(a6);
			\draw[thickedge] (p1)--(a8);
						
			\draw[thickedge,dashed] (a1) to[out=-90,in=180] (5.5,.3);
			\draw[thickedge,dashed] (a12) to[out=-90,in=0] (5.5,.3);
			\draw[thickedge,dashed] (a5) to [out=70, in=150] (a12);
					
			\draw[thickedge] (z1)--(a2);

			\draw[thickedge,dashed] (p2)--(a1);
			\draw[thickedge] (p2)--(a3);
			\draw[thickedge,dashed] (p2)--(a5);
			\draw[thickedge] (p2)--(a7);
			\draw[thickedge] (p2)--(a9);
			\draw[thickedge] (p2)--(a11);

			\draw[thickedge] (z1)--(a4);
			\draw[thickedge] (z1)--(a12);

			\end{tikzpicture}
			\caption{The  quadrangulation $Y$  of order $13$ with Wiener index $164$. The gray region shows the sunflower around a maximal degree vertex. 
			The white vertices and the dotted edges  form one of the non-facial $4$-cycles.} 	
		\label{fig:YZ}
	\end{figure}

So we have that the degree sequence of $Q$ restricted to $A$ is the same as the degree sequence of $Q_n^3$ restricted to its $A$, i.e. $(k-2,3,\ldots,3)$.
We need to figure out what the degree sequence of $Q$ restricted to $B$ is. Assume $k\ge 6$, and $B$ has degree sequence $(k-1,5,3,3,\ldots,3)$.
Referring to the sunflower graph $S_v$ at vertex $v$, where  $d(v)=k-1$, we have 
$B=\{v,w_1,\ldots,w_{k-1}\}$ and $A=\{u_1,\ldots,u_{k-1},x,y\}$.
We can assume without loss of generality that $w_1\in B$ has degree 5, and for $i:2\le i\le k-1$ set $z_i$ be the unique vertex in $N(w_i)\setminus\{u_i,u_{i+1}\}$.
 Since $w_1$ is adjacent to $3$ vertices of $A\setminus\{u_1,u_2\}$, it is adjacent to at least one (and at most three) vertices in $\{u_i:3\le i\le k-1\}$,
 and consequently these vertices have degree
at least $4$. As $A$ has a single vertex with degree more than 3, we conclude that there is a unique $j:3\le j\le k-1$
that $w_1$ is adjacent to $u_j$, $d(u_j)=k-2$ and $d(x)=d(y)=3$, and $w_1$ is adjacent to $x$ and $y$. In addition, for $i:2\le i\le k-1$ we have that
$z_i\in\{x,y,u_j\}$; in particular $z_{j-1},z_j\in\{x,y\}$. We may assume without loss of generality that $z_{j-1}=x$.

Let $\mathcal{P}$ be the region bounded by $C_v$ that is different from $\mathcal{R}_v$, and let ${\mathcal{R}}_1$ and ${\mathcal{R}}_2$  be the two subregions that
the edge $w_1u_j$ cuts $\mathcal{P}$ into; without loss of generality the boundary of $\mathcal{R}_1$ is the cycle $w_1u_2w_2u_{3}\ldots u_j$.
Now ${\mathcal{R}}_1,{\mathcal{R}}_2$ and ${\mathcal{R}}_v$ share only vertices on the boundary, and the common boundary of ${\mathcal{R}}_1$ and ${\mathcal{R}}_2$ 
is the edge $u_jw_1$.
${\mathcal{R}}_1$ has $j-2\ge 1$ vertices $w_2,\ldots w_{j-1}$ from $B\setminus\{w_1\}$ on its common boundary with ${\mathcal{R}}_v$, and for
$i:2\le i\le j-1$ the vertex $z_i$ lies in $\mathcal{R}_1$ (inside or on the boundary). Since $z_{j-1}=x$, $x$ is inside $\mathcal{R}_1$. Let $\mathcal{Q}$ be the subregion of
$\mathcal{R}_1$ bounded by the cycle  $w_1u_2w_2\cdots u_{j-1}w_{j-1}xw_1$. Then for $i:2\le i\le j-2$ the vertex $z_i$ lies in $\mathcal{Q}$, so $z_i\in\{x,y\}$.
${\mathcal{R}}_2$ has $k-j\ge 1$ vertices $w_{j},w_{j+1},\ldots,w_{k-1}$ from $B\setminus\{w_1\}$ on its common boundary with 
${\mathcal{R}}_v$ and for $i:j\le i\le k-1$, $z_i$ lies in $\mathcal{R}_2$ (inside or on the boundary). Since $z_j\in\{x,y\}$ and $x$ is inside $\mathcal{R}_1$, this implies
that $z_j=y$, $y$ is inside $\mathcal{R}_2$, for all $i:j\le i\le k-1$ we have $z_i\in\{u_j,y\}$ and for all $i:2\le i\le j-1$ we have $z_i=x$. Similar logic as before gives that
for all $i:j\le i\le k-1$ we have $z_i=y$
 Since $d(x)=d(y)=3$, this means $3=j-1=k-j+1$, so $j=4$ and $k=6$.
We have $Q\simeq Y$ (see Figure~\ref{fig:YZ}) and $W(Q)=164> 160= W(Q_{13}^3)$, a contradiction.

For the rest of the proof we assume that $n\ge 11$, so $k\ge 5$.
The integer sequence that maximizes the sum of squares, and satisfies the conditions
we have for the degree sequence of $G$ in $A$ (respectively $B$) is
$k-2,3,\ldots,3$ (respectively $k-1,4,4,3,\ldots, 3$), the degree sequence of $Q_n^3$. Since $Q_n^3$ is a $3$-connected quadrangulation with diameter at most $4$, this shows
 that $W(Q_n^3)$ is minimal, and
the degree sequence of $Q$ is the same as the degree sequence of $Q_n^3$, and furthermore, the degree sequences of their respective 
partite sets are the same. Last, we need to show that
$Q\simeq Q_n^3$.

Let $v\in Q$ with $d(v)=k-1$, and consider the sunflower $S_v$ around $v$. Let $x,y$ be the vertices of $Q$ not in $S_v$.
Then $B=\{v,w_1,w_2,\ldots,w_{k-1}\}$,  $A=\{u_1,\ldots,u_{k-1},x,y\}$, and without loss of generality the two vertices of degree $4$ in  $B$ are $w_1$ and $w_j$.

 If every $u_i$ has degree $3$ (this must happen in particular when $k=5$ and vertices of $A$ all have degree $3$), then
none of the $u_i$ has a neighbor outside of $S_v$. In this case
$w_1$ and $w_j$ must both be adjacent to $x$ and $y$. 
Without loss of generality  the region ${\mathcal{R}}$  bounded by the cycle $xw_1u_2w_2\ldots u_jw_jx$ that does not contain $v$ contains $y$. (Otherwise we  exchange the name of
$x$ and $y$). 
If $j=k-1$, then $x$ is on the interior of the $4$-cycle $yw_{k-1}u_1w_1y$ that does not contain $v$, and the degree of $x$ can only be $2$, which is a contradiction. If $j=2$, then ${\mathcal{R}}$ is bounded by a $4$-cycle and $y$ can have only degree $2$, a contradiction.
So $3\le j\le k-2$, the  $k-1-j\ge 1$ vertices $w_{j+1},w_{j+2},\ldots,w_{k-1}$ must have $x$ as their third neighbor, and the 
 the $j-2\ge 1$ vertices $w_{2},w_{3},\ldots, w_{j-1}$ must have $y$ as their third neighbor.
 So $\{d(x),d(y)\}=\{k+1-j,j\}=\{3,k-2\}$, which gives $j\in\{3,k-2\}$. This is precisely the graph $Q_n^3$.
 
Now let $i$ be chosen so $u_i$ have degree greater than $3$. 
As all but one of the vertices of $A$ have degree $3$, for $j\ne i$ we have $d(u_j)=3$, $u_j$ has the same neighbors in $Q$ and $S_v$, $d(u_i)=k-2$, $d(x)=d(y)=3$, and
$k\ge 6$.
This means that $u_i$ is adjacent to precisely $k-5\ge 1$ of the vertices in
$\{w_{s}: s\notin\{i-1,i\}, 1\le s\le k-1\}$, so it is adjacent to at least one $w_{\ell}$ such that ${\ell}\notin\{i-1,i\}$.
If $i<\ell\le k-1$ then $u_ivu_{\ell}w_{\ell}u_i$ is a non-facial $4$-cycle (as $u_{i}w_{i}u_{i+1}\ldots w_{\ell-1}u_{\ell}$ lies in one of the regions bounded 
by this cycle while
$w_{\ell}u_{\ell+1}w_{\ell+1}\ldots w_{i-1}u_i$ lies in the other region). If $1\le \ell<i-1$ then $u_ivu_{\ell+1}w_{\ell}u_i$ is a non-facial $4$-cycle.
Since $Q$ can not have non-facial $4$ cycles by Lemma~\ref{la:optim}, this is a contradiction
\end{proof}

\section{Minimum Wiener Index of 5-Connected Triangulations} \label{mosaicsec}

Euler's formula shows that there are no $5$-connected triangulations  of order less than $12$.

First we state some facts about triangulations of a simple $n$-gon not using additional vertices. Triangulations of an $n$-gon can be 
viewed as planar graphs, where the outer face is bounded by an $n$ cycle
and all other faces are bounded by a $3$-cycle (we will refer to such faces as  {\sl triangles}). 
\begin{lemma}\label{la:ngon} Let $n\ge 4$. Any triangulation of a simple $n$-gon uses $n-3$  additional edges (i.e. edges which are not edges of the $n$-gon), and
has  at least $2$  triangles with exactly two of their boundary edges
on the $n$-gon. 
\end{lemma}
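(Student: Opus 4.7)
The plan is a short Euler-plus-double-counting argument. Let $e$ be the total number of edges of the triangulation and let $T$ be the number of triangular (inner) faces, so the total number of faces, including the outer $n$-gon face, is $T+1$. Euler's formula gives
\[
n - e + (T+1) = 2, \qquad \text{so} \qquad T = e - n + 1.
\]
Double-counting edge--face incidences (each edge borders exactly two faces; the outer face contributes $n$ and each triangle contributes $3$) gives $2e = n + 3T$. Substituting the expression for $T$ yields $e = 2n-3$ and $T = n-2$. Since the $n$-gon itself accounts for $n$ edges, the number of additional edges is $e - n = n - 3$, which proves the first claim.

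For the second claim, let $T_i$ denote the number of triangles having exactly $i$ of their three boundary edges on the $n$-gon, for $i \in \{0,1,2,3\}$. Because $n\ge 4$, a triangle with all three edges on the $n$-gon would force $n=3$, so $T_3 = 0$. Counting triangles and counting (triangle, polygon-edge)-incidences (each polygon edge is on exactly one triangle, since the other side is the outer face) gives
\[
T_0 + T_1 + T_2 = n - 2, \qquad T_1 + 2 T_2 = n.
\]
Subtracting the first equation from the second eliminates $T_1$ and produces $T_2 = T_0 + 2 \ge 2$, which is the desired conclusion.

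There is no real obstacle here: the whole lemma rests on the identity $T_2 - T_0 = 2$, which falls out as soon as the two counting identities are written down; the only small thing to note is that the assumption $n\ge 4$ is exactly what is needed to rule out $T_3 > 0$ and thereby make the subtraction valid.
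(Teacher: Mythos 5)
Your proof is correct. It differs from the paper's in both halves, though only mildly: for the edge count the paper invokes a routine induction on $n$, while you derive $e=2n-3$ and $T=n-2$ from Euler's formula together with the edge--face incidence count $2e=n+3T$; for the ``two ears'' claim the paper argues by pigeonhole (there are $n$ polygon edges, each lying on exactly one of the $n-2$ triangles, and each triangle absorbs at most two of them, so at least two triangles must absorb two each), whereas you write down the two counting identities $T_0+T_1+T_2=n-2$ and $T_1+2T_2=n$ and subtract to get the exact relation $T_2=T_0+2$. The underlying double count is the same, but your version is self-contained (no appeal to an unproved induction) and yields the slightly stronger quantitative statement $T_2-T_0=2$ rather than just $T_2\ge 2$; your observation that $n\ge 4$ is needed precisely to kill $T_3$ matches the paper's remark that for $n\ge4$ every triangle has at most two boundary edges on the $n$-gon. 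Either argument is perfectly adequate for the way the lemma is used later (in Lemma~\ref{la:mosaic} and Lemma~\ref{la:mvs}).
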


\begin{proof} The fact that the triangulation has $n-3$ edges (and consequently $n-2$ triangles) is  easy to prove by induction on $n$. When $n\ge 4$,
all these triangles have at most  $2$ boundary edges on the $n$-gon. As there are    $n-2$ triangles inside and $n$ edges on the $n$-gon itself, by the pigeonhole principle some two triangles must have two edges from edges of the $n$-gon.
\end{proof}

    We need the  following  basic facts about $5$-connected triangulations:

\begin{lemma}\label{la:basics}
Let $T$ be a $5$-connected triangulation of order $n$. The following are true:
 \begin{enumerate}[label={\upshape (\alph*)}]
 \item\label{case:cycles} Every 3-cycle is the boundary of a face and every 4-cycle is the boundary of a region whose interior
 does not contain vertices of the graph, and contains exactly one edge.
 \item\label{case:triangles}  Every edge lies on exactly  two triangles.
 If $abc$ and $bcd$ are triangles of $T$, then $ad$ is not an edge of $T$.
 \item\label{case:edgecycle} For every edge $xy$ of $T$,  there is precisely one $4$-cycle in $T$ that goes through its vertices, but does not use the $xy$ edge; hence the number of $4$-cycles in $T$ is $3(n-2)$.
 \item\label{case:nonedgecycle} If $x,y$ are non-adjacent vertices in $T$, then there is at most one $4$-cycle that contains them.
 \item\label{case:dist2} Let $D_i$ denote the number of unordered pairs of vertices at distance $i$ in $T$. We have $D_1=3(n-2)$ and $$D_2=\frac{1}{2}\sum_{x\in V(T)} d^2(x)-12(n-2).$$
 \item\label{case:lowerbound} $$W(T)\ge 3\binom{n}{2}+6(n-2)-\frac{1}{2}\sum_{x\in V(T)} d^2(x),$$
 with equality if and only if $T$ has diameter at most $3$.      
  \end{enumerate} 
  \end{lemma}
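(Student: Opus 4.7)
The plan is to establish the six claims in order, with parts~\ref{case:cycles} and~\ref{case:nonedgecycle} doing the structural work via $5$-connectivity and planarity, while the remaining parts will follow by counting arguments built on them. For~\ref{case:cycles} I would invoke $5$-connectivity directly: a $3$-cycle $C$ separates the sphere into two regions, and if each contained a vertex of $T$ then $V(C)$ would be a cutset of size $3$, which is impossible since $\kappa(T)\ge 5$ and $n\ge 12$. Hence one side has no interior vertex, and because $T$ is a triangulation that side is a triangular face. The same cutset argument applied to a $4$-cycle forces one side to be empty of vertices, and Lemma~\ref{la:ngon} applied to the resulting triangulated $4$-gon shows that exactly one chord-edge lies inside.

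For the first assertion of~\ref{case:triangles}, the two faces incident to any edge are $3$-cycles, and by~\ref{case:cycles} these are the only $3$-cycles through that edge. For the second, if $ad$ were an edge then $abd$ and $acd$ would also be $3$-cycles and hence faces by~\ref{case:cycles}, so all three faces at vertex $a$ would lie inside the induced $K_4$ on $\{a,b,c,d\}$, forcing $d(a)=3$ and contradicting $\delta(T)\ge 5$. Part~\ref{case:edgecycle} then follows: by~\ref{case:triangles} the edge $xy$ has exactly two common neighbors $a,b$ (any third would force a third triangular face on $xy$), yielding the unique $4$-cycle $xayb$ through $x,y$ that avoids the edge $xy$; conversely, each $4$-cycle has exactly one edge-diagonal (the chord on the empty side from~\ref{case:cycles}, while both diagonals being edges would produce a forbidden $K_4$ via~\ref{case:triangles}), so $4$-cycles are in bijection with edges of $T$ and number $3(n-2)$.

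Part~\ref{case:nonedgecycle} is the most delicate step and I expect it to be the main obstacle. Suppose non-adjacent $x,y$ had three common neighbors $p_1,p_2,p_3$. For each pair $\{i,j\}$ the $4$-cycle $xp_iyp_j$ has diagonals $xy$ (not an edge) and $p_ip_j$; by~\ref{case:cycles} one of these diagonals must sit inside the empty region of the $4$-cycle as an edge, so $p_ip_j\in E(T)$. Thus $p_1p_2p_3$ spans a triangle, which is a face by~\ref{case:cycles}. Drawing this face as the outer region forces $x$ into the interior of the triangle, and the three edges $xp_1,xp_2,xp_3$ subdivide that interior into three topological triangular subregions bounded by the $3$-cycles $xp_ip_j$. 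The vertex $y$ must lie inside one such subregion, say the one bounded by $xp_1p_2$; but then the edge $yp_3$ of $T$ would have to cross one of $xp_1,xp_2,p_1p_2$, contradicting planarity.

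Parts~\ref{case:dist2} and~\ref{case:lowerbound} will then follow by counting. The edge count gives $D_1=3(n-2)$. Counting $2$-paths as $\sum_v\binom{d(v)}{2}=\tfrac12\sum_v d^2(v)-3(n-2)$ and splitting them by whether the endpoints are adjacent (contributing $2|E|=6(n-2)$ via~\ref{case:triangles}) or at distance $2$ (contributing $D_2$ plus one extra for each pair with two common neighbors) yields the formula for $D_2$, once one observes via~\ref{case:edgecycle} and~\ref{case:nonedgecycle} that such distance-$2$ pairs correspond bijectively to the non-edge diagonals of the $3(n-2)$ four-cycles. Finally~\ref{case:lowerbound} comes from $W(T)\ge D_1+2D_2+3\bigl(\binom{n}{2}-D_1-D_2\bigr)$, with equality iff $T$ has diameter at most $3$, after substituting the expressions from~\ref{case:dist2}.
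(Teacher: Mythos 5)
Your proof is correct and, for parts \ref{case:cycles}, \ref{case:triangles}, \ref{case:edgecycle}, \ref{case:dist2} and \ref{case:lowerbound}, follows essentially the same route as the paper: the cutset argument for \ref{case:cycles} (you add an explicit appeal to Lemma~\ref{la:ngon} for the ``exactly one edge'' clause, which the paper leaves implicit), the bijection between $4$-cycles and their unique edge-diagonals for \ref{case:edgecycle}, and the same double count of $2$-paths for \ref{case:dist2} and \ref{case:lowerbound}. The genuine divergence is in part \ref{case:nonedgecycle}. Both arguments reduce to three common neighbors $p_1,p_2,p_3$ of the non-adjacent pair $x,y$ (two distinct $4$-cycles through $\{x,y\}$ force three common neighbors, a one-line observation the paper states and you leave tacit). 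The paper then notes that the three empty regions bounded by the $4$-cycles $xp_iyp_j$ cover the entire sphere, so $T$ would have only five vertices, contradicting $n\ge 12$. You instead use part \ref{case:cycles} to force each $p_ip_j$ to be an edge, making $p_1p_2p_3$ a facial triangle, and then argue by planarity that $x$ splits the non-facial side into three triangular cells and $y$, lying inside one of them, cannot reach the opposite $p_i$ without a crossing. Both are sound: the paper's is a shorter global covering count, while your local crossing argument never invokes the order bound $n\ge 12$ (though that bound is automatic for $5$-connected triangulations, so no real generality is gained).
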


\begin{proof}
\ref{case:cycles}:  if $C$ is a cycle that separates two regions that both contain vertices in their interior, then the vertices of $C$ form a cutset, therefore $C$ has at least $5$ vertices.

\ref{case:triangles}: An edge bounds two faces that are triangles, and if there is a third 3-cycle using the edge, the other two edges of two of these 3-cycles give a $4$-cycle that
has vertices in both of its regions. If $abc$ and $bcd$ are triangles such that $ad$ is an edge, then one of $abd$, $acd$ 
 would be a non-facial triangle unless $n=4$. Both of these contradict ~\ref{case:cycles}, and ~\ref{case:triangles} follows.

\ref{case:edgecycle}: Since every $4$-cycle $abcd$ bounds a region  that has no vertices but has an edge (say $ac$), and if $ac$ is an edge then $bd$ cannot be an edge by \ref{case:triangles},
for every $4$-cycle there is a unique edge that is not part of the cycle and connects two of its vertices. So we can map
$4$-cycles to edges 
by assigning this edge to each cycle. This map is injective.  If two different $4$-cycles would map to the same edge, this edge is part of three triangles, contradicting \ref{case:triangles}.

As each edge lies on two triangles which together form a $4$ cycle, every edge is assigned to precisely one of these $4$-cycles, so the map is surjective as well. 
Thus, the number of $4$-cycles is the same as the number of edges, which is
$3(n-2)$ in any planar triangulation. ~\ref{case:edgecycle} follows.

\ref{case:nonedgecycle}: Assume $x,y$ are non-adjacent vertices that appear on two $4$-cycles. As each $4$-cycle containing $x,y$ has two $x$-$y$ paths of length $2$,
we have at least three $x$-$y$ paths of length $2$, say $xa_1y$, $xa_2y$, $xa_3y$. By~\ref{case:cycles}, for each $i,j\in\{1,2,3\}$, $i\ne j$ the cycle
$C_{ij}=xa_iya_jx$ bounds a region $\mathcal{R}_{ij}$ that contains no vertices in its interior.  But the three regions $\mathcal{R}_{12},\mathcal{R}_{13}\mathcal{R}_{23}$
together with their boundaries cover the entire plane, so $T$ has no other vertices besides $x,y,a_1,a_2,a_3$. This is a contradiction, as $5$-connected triangulations must have at least $12$ vertices.

\ref{case:dist2}:
Observe that $D_1$ is exactly the number of edges of $T$, $3(n-2)$. The formula
$\sum\binom{d(x)}{2}=\frac{1}{2}\sum d^2(x)-3(n-2)$ counts the number of paths of length $2$ between unordered pairs of vertices. If an unordered pair of vertices
has more than $1$ such path, it appears on a $4$-cycle, and by ~\ref{case:edgecycle} and ~\ref{case:nonedgecycle} this $4$-cycle is unique.
As each $4$-cycle contains exactly two such unordered pairs of vertices, the number of unordered pairs of vertices
that have a path of length $2$ between them is $\frac{1}{2}\sum d^2(x)-9(n-2)$ by ~\ref{case:edgecycle}. As every edge is contained in exactly one
$4$-cycle,
this equals $D_1+D_2$, proving ~\ref{case:dist2}.

\ref{case:lowerbound}:  As $\sum_i D_i=\binom{n}{2}$, we get 
\begin{eqnarray*}W(T)&=&\sum_i iD_i\ge D_1+2D_2+3\left(\binom{n}{2}-D_1-D_2\right)=3\binom{n}{2}-2D_1-D_2\\
&=&3\binom{n}{2}+6(n-2)-\frac{1}{2}\sum_{x\in V(T)}d^2(x),
\end{eqnarray*}
and equality holds precisely when the diameter is at most $3$.
\end{proof}

Analogously to  Section~\ref{sunfl},
we define an auxiliary drawn graph, which we will use extensively. Let $T$ be a $5$-connected triangulation, and let $v \in V(T)$ have degree $d$. We define the {\sl mosaic graph}  $M_v$  at vertex $v$,  together with its planar drawing, in the following way.    $M_v$   contains the neighbors of $v$ in $G$, $u_1,u_2,\ldots, u_d$, 
with the edges $vu_i$, such that vertices $u_i$ are
labeled according  the clockwise cyclic order  of the edges. We include the edges
$u_iu_{i+1}\in E(T)$  for every $1\le i\le d$ (indices  are taken modulo $d$) in $M_v$, following the drawing of $T$.  We also add a vertex $w_i\not=v$, which is a common
neighbor of $u_i$ and $u_{i+1}$, together with edges joining them to  $u_i$ and $u_{i+1}$ in $T$, following the drawing of $T$, for every $i$. 
We understand $M_v$ as a part of the drawing of $T$. We will show that $M_v$ has $2d+1$ distinct vertices. 

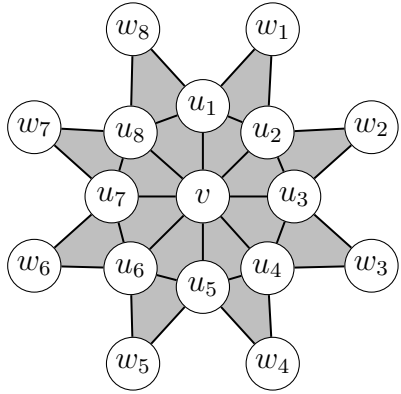
\begin{figure}[ht]
		\centering
			\begin{tikzpicture}
			[scale=0.6,xscale=1,inner sep=.5mm,
			vertex/.style={circle,draw,minimum size=7mm,fill=white},
			thickedge/.style={line width=0.75pt}] 
			\fill[gray!50] (0,2)--(1.53073, 3.69552)--(1.4142,1.4142)--(3.69552, 1.53073)--(2,0)--(3.69552, -1.53073)--(1.4142,-1.5858)--(1.53073, -3.69552)--(0,-2)-- (-1.53073, -3.69552)--(-1.5858,-1.5858)--(-3.69552, -1.53073)--(-2,0)--(-3.69552, 1.53073)--(-1.5858,1.4142)--(-1.53073, 3.69552)--(0,2);

			\node[vertex] (v) at (0,0) {$v$};
			\node[vertex] (u1) at (0,2) {$u_{1}$};
			\node[vertex] (u2) at (1.4142,1.4142) {$u_{2}$};
			\node[vertex] (u3) at (2,0) {$u_{3}$};
			\node[vertex] (u4) at (1.4142,-1.5858) {$u_{4}$};
			\node[vertex] (u5) at (0,-2) {$u_{5}$};
			\node[vertex] (u6) at (-1.5858,-1.5858) {$u_{6}$};
			\node[vertex] (u7) at (-2,0) {$u_{7}$};
			\node[vertex] (u8) at (-1.5858,1.4142) {$u_{8}$};
			
			\node[vertex] (w1) at (1.53073, 3.69552) {$w_{1}$};
			\node[vertex] (w2) at (3.69552, 1.53073) {$w_{2}$};
			\node[vertex] (w3) at (3.69552, -1.53073) {$w_{3}$};
			\node[vertex] (w4) at (1.53073, -3.69552) {$w_{4}$};
			\node[vertex] (w5) at (-1.53073, -3.69552) {$w_{5}$};
			\node[vertex] (w6) at (-3.69552, -1.53073) {$w_{6}$};
			\node[vertex] (w7) at (-3.69552, 1.53073) {$w_{7}$};
			\node[vertex] (w8) at (-1.53073, 3.69552) {$w_{8}$};
            
        	\draw[thickedge] (v)--(u1);
            \draw[thickedge] (v)--(u2);
            \draw[thickedge] (v)--(u3);
            \draw[thickedge] (v)--(u4);
            \draw[thickedge] (v)--(u5);
            \draw[thickedge] (v)--(u6);
            \draw[thickedge] (v)--(u7);
            \draw[thickedge] (v)--(u8);
            
            \draw[thickedge] (u1)--(w1)--(u2);
            \draw[thickedge] (u2)--(w2)--(u3);
            \draw[thickedge] (u3)--(w3)--(u4);
            \draw[thickedge] (u4)--(w4)--(u5);
            \draw[thickedge] (u5)--(w5)--(u6);
            \draw[thickedge] (u6)--(w6)--(u7);
            \draw[thickedge] (u7)--(w7)--(u8);
            \draw[thickedge] (u8)--(w8)--(u1);

            \draw[thickedge] (u1)--(u2)--(u3)--(u4)--(u5)--(u6)--(u7)--(u8)--(u1);    
                
			\end{tikzpicture}
	    \caption{The mosaic graph $M_v$ around $v$,  with $d=8$. The grey region is $\mathcal{R}_v$.}
	\label{fig:mosaic}
\end{figure}

\begin{lemma}\label{la:mosaic} If $T$ is a drawing of a $5$-connected triangulation of order $n$, and $v$ is any vertex of $T$ with  degree $d(v)=d$, the mosaic graph $M_v$  in $T$ has $2d+1$ distinct vertices. Furthermore, 
 the region ${\mathcal{R}}_v$ that is bounded by the cycle
$u_1w_1u_2w_2\ldots u_dw_d$ and contains the vertex $v$, contains edges and vertices from $T$ if and only if they are edges and vertices  in  the mosaic graph $M_v$. In addition, 
$T$ contains at least one vertex not in $M_v$, consequently $\Delta(T)\le \lfloor \frac{n}{2}\rfloor -1$. Moreover, $n\ne 13$.
\end{lemma}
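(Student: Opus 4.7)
The plan is to establish the four assertions of the lemma in turn, relying on Lemma~\ref{la:basics} for the facial behavior of short cycles in a $5$-connected triangulation and Lemma~\ref{la:ngon} for the ear count of a polygon triangulation. Throughout, let $d = d(v)$.

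\textbf{Distinctness and the contents of $\mathcal{R}_v$.} First observe that $vw_i$ is not an edge, since Lemma~\ref{la:basics}~\ref{case:triangles} applied to the two triangles $vu_iu_{i+1}$ and $w_iu_iu_{i+1}$ on the edge $u_iu_{i+1}$ forbids the ``opposite'' pair from being adjacent. The same lemma rules out $w_i = u_j$ (otherwise $u_ju_iu_{i+1}$ and $vu_iu_{i+1}$ would share an edge while $vu_j \in E(T)$). If $w_i = w_j$ for some $j \ne i$, then this common vertex is adjacent to all of $u_i, u_{i+1}, u_j, u_{j+1}$, at least three of which are distinct; hence the non-adjacent pair $\{v, w_i\}$ has $\ge 3$ common neighbors and lies on $\ge \binom{3}{2} = 3$ distinct $4$-cycles, contradicting Lemma~\ref{la:basics}~\ref{case:nonedgecycle}. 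Thus $M_v$ has $2d + 1$ distinct vertices, the $2d$ triangular faces $vu_iu_{i+1}$ and $w_iu_iu_{i+1}$ are pairwise distinct, and their union is $\mathcal{R}_v$; since face interiors are empty, $\mathcal{R}_v$ contains exactly the vertices and edges of $M_v$.

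\textbf{A vertex must lie outside $M_v$.} Suppose for contradiction that $V(T) = V(M_v)$; then $n = 2d + 1$ and $|E(T)| - |E(M_v)| = (6d-3) - 4d = 2d-3$ outer edges triangulate the $2d$-gon complementary to $\mathcal{R}_v$. Outer edges of type $u_iu_j$ are ruled out by Lemma~\ref{la:basics}~\ref{case:cycles}: the $3$-cycle $vu_iu_j$ must be a face, which forces $u_iu_j$ to already live in $M_v$. Outer edges of type $w_iu_j$ with $j \notin \{i, i+1\}$ are excluded by the same common-neighbor trick as above --- now $u_i, u_{i+1}, u_j$ all become common neighbors of the non-adjacent pair $v, w_i$, producing $\ge 3$ $4$-cycles through it. Hence every outer edge has the form $w_iw_j$; drawn without crossings in the outer region, these form an outerplanar graph $G_w$ on $d$ vertices with the maximum $2d-3$ edges, so $G_w$ is edge-maximal outerplanar and is a triangulation of the $d$-gon $w_1w_2\cdots w_d$. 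Lemma~\ref{la:ngon} then supplies $\ge 2$ ear vertices $w_i$ of degree $2$ in $G_w$; any such $w_i$ satisfies $d_T(w_i) \le d_{M_v}(w_i) + 2 = 4 < 5 \le \delta(T)$, contradicting $5$-connectivity.

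\textbf{The degree bound and $n \ne 13$.} Since a vertex outside $M_v$ exists for every $v$, $n \ge 2d(v) + 2$, so $d(v) \le \lfloor (n-2)/2 \rfloor = \lfloor n/2 \rfloor - 1$, giving $\Delta(T) \le \lfloor n/2 \rfloor - 1$. For $n = 13$ this forces all $13$ degrees to equal $5$, but handshaking gives $\sum_v d(v) = 2(3n-6) = 66 \ne 65 = 5 \cdot 13$, contradiction. The main obstacle is the outer-chord case analysis in the second step --- in particular, ruling out $w_iu_j$ outer chords, which is where the $5$-connectivity hypothesis really enters, via Lemma~\ref{la:basics}~\ref{case:nonedgecycle}.
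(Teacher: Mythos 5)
Your proof is correct, and its overall skeleton --- ruling out coincidences among the $2d+1$ vertices, deriving a contradiction from the $(2d-3)$-chord triangulation of the outer $2d$-gon via Lemma~\ref{la:ngon}, and the handshaking parity for $n=13$ --- is the same as the paper's. Where you genuinely differ is in the local arguments. The paper rules out $w_i=w_j$ and the forbidden chords $u_iu_j$, $u_iw_j$ by exhibiting explicit separating $3$- and $4$-cycles (e.g.\ the cutset $\{v,u_{i+1},u_{j+1},w_i\}$); you instead route everything through Lemma~\ref{la:basics}: part~\ref{case:triangles} to get $vw_i\notin E(T)$ and $w_i\ne u_j$, and part~\ref{case:nonedgecycle} (at most one $4$-cycle through a non-adjacent pair) to dispose of $w_i=w_j$ and of the chords $w_iu_j$ in one uniform stroke. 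This is legitimate --- Lemma~\ref{la:basics} precedes Lemma~\ref{la:mosaic} and its proof does not depend on it, so there is no circularity --- and it is arguably cleaner, since it treats $j=i\pm 1$ and general $j$ together, where the paper needs a separate degree argument for $w_{i-1}\ne w_i$. You also replace the paper's local face argument (that $w_{i-1}u_i$ and $u_iw_i$ bound a common face, forcing $w_{i-1}w_i\in F$) by a global count: $2d-3$ pairwise non-crossing chords among $d$ vertices in convex cyclic position force a maximal outerplanar graph, i.e.\ a triangulated $d$-gon $w_1\cdots w_d$. Both deliver the hypotheses of Lemma~\ref{la:ngon} and the degree-$4$ contradiction. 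What the paper's version buys is self-containedness (it never invokes the $4$-cycle uniqueness statement); what yours buys is brevity and reuse of a single tool, at the cost of importing the standard extremal fact about outerplanar graphs.
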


\begin{proof}
Since $T$ is $5$-connected, $\delta(T)\geq 5$. 
As before,
label the neighbors of $v$ by $u_1, \dots, u_d$, in their planar clockwise cyclic order around $v$. We  get  for free that $u_iu_{i+1}$ is an edge in $T$, since we have a 
triangulation. 
For each pair of successive neighbors $u_i$ and $u_{i+1}$ (indices taken modulo $d$), let $w_i \not=v $ be their
common neighbor that completes the face 
that has $u_iu_{i+1}$ on its boundary, but not $v$.
 This means, in particular, 
that ${\mathcal{R}}_v$ will satisfy the required property, so we just need to show that the vertices listed in $M_v$ are all distinct.

If $y$ is a neighbor of $u_i$ and $y\notin\{v,w_{i-1},w_i, u_{i-1}, u_{i+1}\}$ then $y$ must lie between $w_{i-1}$ and $w_i$ in the planar cyclic order around $u_i$, In particular,
as $d(u_i)\ge 5$, we have that
$w_{i-1}\ne w_i$.

Also, $u_i\ne w_j$ for all $1\le i,j\le d$. For  $j\in\{i-1,i\}$ this is obvious, and for other values of $j$ if $u_i=w_j$ then  $v,u_i,u_j$ is a 3-element cutset.

Assume  now that  $w_i = w_j$ for some $j\ne i$. We already have that $j\notin\{i-1,i+1\}$ and hence the vertices $u_i,u_{i+1},u_j,u_{j+1}$ are all distinct.
We consider two regions of the planar drawing of $T$: ${\mathcal{R}}_1$ is bounded by the $4$-cycle $u_{i+1} v u_j w_i$ and does not contain the vertex
$u_i$, and ${\mathcal{R}}_2$ is bounded by the $4$-cycle $u_ivu_{j+1}w_i$ and does not contain the vertex $u_{i+1}$.

The neighbors of $u_i$ that differ from $v,u_{i+1}$ and $w_i$, lie in ${\mathcal{R}}_2$ and
the neighbors of $u_j$ that differ from $v,u_{j+1}$ and $w_i$, must lie in ${\mathcal{R}}_1$. Hence $\{v,u_{i+1},u_{j+1}, w_{i}\}$ separates $u_i$ from $u_{j}$ (see Figure ~\ref{fig:cutsetq2}), contradicting
that $T$ is $5$-connected.

	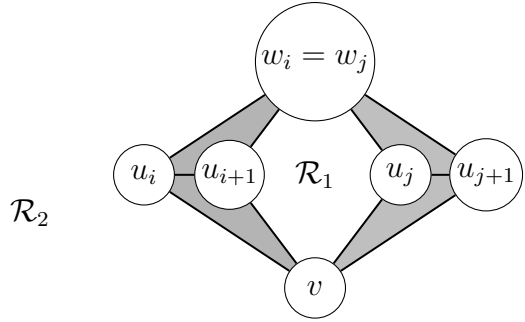
\begin{figure}[ht]
		\centering
			\begin{tikzpicture}
			[scale=0.5,xscale=0.75,inner sep=.5mm,
			vertex/.style={circle,draw,fill=white,minimum size=8mm},
			thickedge/.style={line width=0.75pt}] 
			\coordinate (V) at (5,0);
			\coordinate (W) at (5,6);
			\coordinate (U1) at (-1,3);
			\coordinate (U2) at (2,3);
			\coordinate (U3) at (8,3);
			\coordinate (U4) at (11,3);
			\fill[gray!59] (V)--(U1)--(W)--(U2)--cycle;
			\fill[gray!50] (V)--(U3)--(W)--(U4)--cycle;						
		
			\node at  (-5,2) {${\mathcal{R}}_2$};
			\node[vertex] (v) at (5,0) {$v$};
			\node[vertex] (ui) at (-1,3) {$u_{i}$};
			\node[vertex] (ui1) at (2,3) {$u_{i+1}$};
			\node[vertex] (uj) at (8,3) {$u_{j}$};
			\node[vertex] (uj1) at (11,3) {$u_{j+1}$};
			\node[vertex] (w) at (5,6) {$w_i=w_j$};
			\node at (5,3) {${\mathcal{R}}_1$};
			\draw[thickedge] (v)--(ui);
			\draw[thickedge] (v)--(ui1);
			\draw[thickedge] (v)--(uj);
			\draw[thickedge] (v)--(uj1);
			\draw[thickedge] (w)--(ui);
			\draw[thickedge] (w)--(ui1);
			\draw[thickedge] (w)--(uj);
			\draw[thickedge] (w)--(uj1);
			\draw[thickedge] (ui)--(ui1);
			\draw[thickedge] (uj)--(uj1);			
		\end{tikzpicture}
		\caption{4-element cutset $\{v,u_{i+1},u_{j+1},w_i\}$ in $M_v$ when $w_i=w_j$. The shaded regions are unions of faces, so they have no additional vertices.}
		\label{fig:cutsetq2}
	\end{figure}

Now let $v$ be a vertex  of $T$ with maximum degree, i.e.,  $d(v)=\Delta (T)=\Delta$. The mosaic graph $M_v$ around $v$ contains $2\Delta+1$ vertices.
If $T$ contains a vertex that is not in $M_v$, then $2\Delta+2\le n$, and  the claimed inequality follows.

If every vertex of $T$ is  in $M_v$, then set of edges 
$F$ not in $M_v$ form a triangulation of the $2\Delta$-cycle $u_1w_1u_2w_2\ldots u_{\Delta}w_{\Delta}u_1$ on the region different from ${\mathcal{R}}_v$; consequently
$|F|=2\Delta-3$.
Note that for  any $1\le i< j\le \Delta$, if $u_iu_j\in F$, then the 3-cycle $u_iu_jv$  separates the vertices
$w_i$ and $w_{j}$, so $u_i,u_j,v$   would be a cutset of size $3$, a contradiction.
If for  any $j\notin\{i,i-1\}$,  $u_iw_j\in F$, then the $4$-cycle $u_iw_ju_jv$ has the vertices $w_i$ and $w_{i-1}$ on its different sides, giving a cutset of
size $4$, which is also a contradiction. Therefore every edge in $F$ connects two vertices of $W=\{w_1,\ldots,w_{\Delta}\}$.

But then for every $i$,  the edges $w_{i-1}u_i$ and $u_iw_i$ lie on the boundary of the same face, giving
$w_{i-1}w_i\in F$. Hence $w_1,\ldots,w_{\Delta}$ determines a $\Delta$-gon (all of the sides are in $F$), and this $\Delta$-gon is triangulated by the 
remaining edges of $F$. Lemma~\ref{la:ngon} applies. Say, $w_{i-1},w_i, w_{i+1} $ is  a triangle with two edges on the boundary of the $\Delta$-gon. Then $d(w_i)=4$, contradicting the fact that $T$ is 
$5$-connected.

Finally, assume to the contrary that $T$ has $13$ vertices. Then $\Delta(T)\le 5$, therefore $T$ is $5$-regular. 
The sum of degrees of $T$ is odd, contradicting the Handshaking Lemma.
\end{proof}

	\begin{figure}[ht]
		\centering
			\begin{tikzpicture}
			[scale=0.6,yscale=1.1,inner sep=0.75mm,vertex/.style={circle,draw},thickedge/.style={line width=0.75pt}]
			\coordinate (A1) at (0,2.5);
			\coordinate (A2) at (1,2.5);
			\coordinate (A3) at (2,2.5);
			\coordinate (A4) at (3,2.5);
			\coordinate (A5) at (4,2.5);
			\coordinate (A6) at (5,2.5);
			\coordinate (B1) at (0,1.5);
			\coordinate (B2) at (1,1.5);
			\coordinate (B3) at (2,1.5);
			\coordinate (B4) at (3,1.5);
			\coordinate (B5) at (4,1.5);
			\coordinate (B6) at (5,1.5);		
			\fill[gray!20]  (B6) to[out=250,in=0] (2.3,-.6) to [out=180, in=240] (A1)--(B1)--(A2)--(B2)--(A3)--(B3)--(A4)--(B4)--(A5)--(B5)--(A6)--(B6);
			\node[vertex] (p1) at (2.5,4) [fill=black] {};
			\node[vertex] (p2) at (2.5,0) [fill=black] {};
			\node[vertex] (a1) at (0,2.5) [fill=black] {};
			\node[vertex] (a2) at (1,2.5) [fill=black] {};
			\node[vertex] (a3) at (2,2.5) [fill=black] {};
			\node[vertex] (a4) at (3,2.5) [fill=black] {};
			\node[vertex,dashed] (a5) at (4,2.5) [fill=gray] {};
			\node[vertex] (a6) at (5,2.5) [fill=black] {};
			\node[vertex] (b1) at (0,1.5) [fill=black] {};
			\node[vertex] (b2) at (1,1.5) [fill=black] {};
			\node[vertex] (b3) at (2,1.5) [fill=black] {};
			\node[vertex] (b4) at (3,1.5) [fill=black] {};
			\node[vertex,dashed] (b5) at (4,1.5) [fill=gray] {};
			\node[vertex] (b6) at (5,1.5) [fill=black] {};
			\draw[thickedge] (p1)--(a1);
			\draw[thickedge] (p1)--(a2);
			\draw[thickedge] (p1)--(a3);
			\draw[thickedge] (p1)--(a4);
			\draw[thickedge,dashed] (p1)--(a5);
			\draw[thickedge] (a1)--(a2)--(a3)--(a4);
			\draw[thickedge,dashed] (a4)--(a5);
			\draw[thickedge] (a5)--(a6);
			\draw[thickedge] (p1)--(a6);
			\draw[thickedge] (p2)--(b1);
			\draw[thickedge] (p2)--(b2);
			\draw[thickedge] (p2)--(b3);
			\draw[thickedge] (p2)--(b4);
			\draw[thickedge,dashed] (p2)--(b5);
			\draw[thickedge] (p2)--(b6);
			\draw[thickedge] (b1)--(b2)--(b3)--(b4);
			\draw[thickedge,dashed] (b4)--(b5);
			\draw[thickedge] (b5)--(b6);
			\draw[thickedge] (a1)--(b1);		
			\draw[thickedge] (a2)--(b2);
			\draw[thickedge] (a3)--(b3);
			\draw[thickedge] (a4)--(b4);
			\draw[thickedge,dashed] (a5)--(b5);
			\draw[thickedge] (a6)--(b6);
			\draw[thickedge] (a2)--(b1);
			\draw[thickedge] (a3)--(b2);
			\draw[thickedge] (a4)--(b3);
			\draw[thickedge,dashed] (a5)--(b4);
			\draw[thickedge] (a6)--(b5);
			\draw[thickedge] (b1) to[out=290,in=180] (2.5,-0.3);
			\draw[thickedge] (b6) to[out=250,in=0] (2.5,-0.3);
			\draw[thickedge] (a1) to[out=70,in=180] (2.5,4.3);
			\draw[thickedge] (a6) to[out=110,in=0] (2.5,4.3);
			\draw[thickedge] (b6) to[out=250,in=0] (2.3,-.6);
			\draw[thickedge] (a1) to[out=240,in=180] (2.3,-.6);
			\end{tikzpicture}
		    \qquad
			\begin{tikzpicture}
			[scale=0.6,yscale=-1,inner sep=0.75mm,vertex/.style={circle,draw},thickedge/.style={line width=0.75pt}]
			\coordinate(A1) at (0,3.5) ;
			\coordinate(A2) at (1,3.5) ;
			\coordinate (A3) at (2,3.5) ;
			\coordinate (A4) at (3,3.5) ;
			\coordinate (A5) at (4,3.5) ;
			\coordinate (A6) at (5,3.5) ;
			\coordinate (A7) at (6,3.5);
			\coordinate (B1) at (0,2.5) ;
			\coordinate (B3) at (3,2.5) ;
			\coordinate (B4) at (4,2.5) ;
			\coordinate (B5) at (5,2.5);
			\coordinate (B6) at (6,2.5) ;
			\coordinate (D1) at (1,2.5) ;
			\coordinate (D2) at (2,2.5) ;
			
			\fill[gray!20] (A7) to[out=80,in=0] (2.5,5.8) to[out=180,in=120] (B1)--(A1)--(D1)--(A2)--(D2)--(A3)--(B3)--(A4)--(B4)--(A5)--(B5)--(A6)--(B6)--(A7);

			\node[vertex] (p1) at (3,5) [fill=black] {};
			\node[vertex] (p2) at (3,1) [fill=black] {};	
			\node[vertex] (a1) at (0,3.5) [fill=black] {};
			\node[vertex] (a2) at (1,3.5) [fill=black] {};
			\node[vertex] (a3) at (2,3.5) [fill=black] {};
			\node[vertex] (a4) at (3,3.5) [fill=black] {};
			\node[vertex,dashed] (a5) at (4,3.5) [fill=gray] {};
			\node[vertex] (a6) at (5,3.5) [fill=black] {};
			\node[vertex] (a7) at (6,3.5) [fill=black] {};
			\node[vertex] (b1) at (0,2.5) [fill=black] {};
			\node[vertex] (b2) at (2,1.9) [fill=black] {};
			\node[vertex] (b3) at (3,2.5) [fill=black] {};
			\node[vertex] (b4) at (4,2.5) [fill=black] {};
			\node[vertex,dashed] (b5) at (5,2.5) [fill=gray] {};
			\node[vertex] (b6) at (6,2.5) [fill=black] {};
			\node[vertex] (d1) at (1,2.5) [fill=black] {};
			\node[vertex] (d2) at (2,2.5) [fill=black] {};
			\draw[thickedge] (p1)--(a1);
			\draw[thickedge] (p1)--(a2);
			\draw[thickedge] (p1)--(a3);
			\draw[thickedge] (p1)--(a4);
            \draw[thickedge,dashed] (p1)--(a5);
            \draw[thickedge] (p1)--(a6);
            \draw[thickedge] (p1)--(a7);
			\draw[thickedge] (a1)--(a2)--(a3)--(a4);
			\draw[thickedge,dashed] (a4)--(a5);
			\draw[thickedge] (a5)--(a6)--(a7);
			\draw[thickedge] (a5)--(a6);
			\draw[thickedge] (p1)--(a6);
			\draw[thickedge] (p2)--(b1);
			\draw[thickedge] (p2)--(b2);
			\draw[thickedge] (p2)--(b3);
			\draw[thickedge] (p2)--(b4);
			\draw[thickedge,dashed] (p2)--(b5);
			\draw[thickedge] (p2)--(b6);
			\draw[thickedge] (p2)--(b6);
			\draw[thickedge] (b1)--(b2)--(b3)--(b4);
			\draw[thickedge,dashed] (b4)--(b5);
			\draw[thickedge] (b5)--(b6);
			\draw[thickedge] (b5)--(b6);
			\draw[thickedge] (a1) to[out=90,in=180] (3,5.3);
			\draw[thickedge] (3,5.3) to[out=0,in=90] (a7);
            \draw[thickedge] (b1) to[out=290,in=180] (3,0.6);
			\draw[thickedge] (3,0.6) to[out=0,in=250] (b6);
           \draw[thickedge] (a7) to[out=80,in=0] (2.5,5.8);
			\draw[thickedge] (2.5,5.8) to[out=180,in=120] (b1);	
            \draw[thickedge] (d1)--(a1);
			\draw[thickedge] (d1)--(a2);
			\draw[thickedge] (d1)--(b1);
			\draw[thickedge] (d1)--(b2);
			\draw[thickedge] (d1)--(d2);
			\draw[thickedge] (d2)--(a2);
			\draw[thickedge] (d2)--(a3);
			\draw[thickedge] (d2)--(b2);
			\draw[thickedge] (d2)--(b3);
			\draw[thickedge] (a1)--(b1);
			\draw[thickedge] (a3)--(b3);
			\draw[thickedge] (a4)--(b4);
			\draw[thickedge,dashed] (a5)--(b5);
			\draw[thickedge] (a4)--(b3);
			\draw[thickedge,dashed] (a5)--(b4);
			\draw[thickedge] (a6)--(b5);
			\draw[thickedge] (a6)--(b6);
			\draw[thickedge] (a7)--(b6);
			\end{tikzpicture}
			\caption{The triangulation $ T_n^5 $, which minimizes the Wiener index among all $5$-connected triangulations of order $n=2k\geq 12$  (left) 
			and of order $ n=2k+1 \geq 15$  (right). Gray vertices and dashed edges indicate the pattern to be repeated. The shaded region shows the mosaic graph around a degree $\lfloor\frac{n}{2}\rfloor-1$ vertex.} 
			\label{fig:t5cminwi}
    \end{figure}
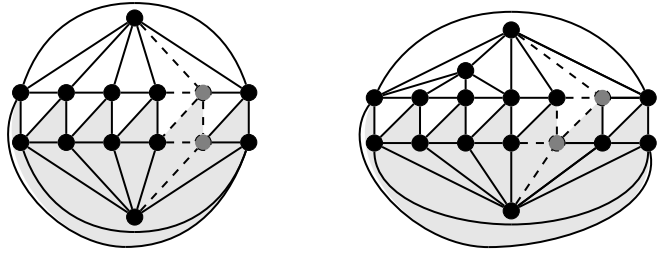

	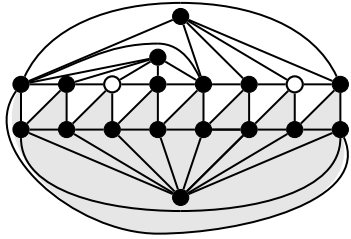
\begin{figure}[ht]
		\centering
			\begin{tikzpicture}
			[scale=0.6,yscale=-1,inner sep=0.75mm,vertex/.style={circle,draw},thickedge/.style={line width=0.75pt}]
			\coordinate(A1) at (0,3.5) ;
			\coordinate(A2) at (1,3.5) ;
			\coordinate (A3) at (2,3.5) ;
			\coordinate (A4) at (3,3.5) ;
			\coordinate (A5) at (4,3.5) ;
			\coordinate (A6) at (5,3.5) ;
			\coordinate (A7) at (6,3.5);
			\coordinate (A8) at (7,3.5);		
			\coordinate (B1) at (0,2.5) ;
			\coordinate (B3) at (3,2.5) ;
			\coordinate (B4) at (4,2.5) ;
			\coordinate (B5) at (5,2.5);
			\coordinate (B6) at (6,2.5) ;
			\coordinate (B7) at (7,2.5) ;			
			\coordinate (D1) at (1,2.5) ;
			\coordinate (D2) at (2,2.5) ;			
			\fill[gray!20] (A8) to[out=-290,in=0] (3,5.8) to[out=180,in=-240] (B1)--(A1)--(D1)--(A2)--(D2)--(A3)--(B3)--(A4)--(B4)--(A5)--(B5)--(A6)--(B6)--(A7)--(B7)--(A8);			
			\node[vertex] (p1) at (3.5,5) [fill=black] {};
			\node[vertex] (p2) at (3.5,1) [fill=black] {};				
			\node[vertex] (a1) at (0,3.5) [fill=black] {};
			\node[vertex] (a2) at (1,3.5) [fill=black] {};
			\node[vertex] (a3) at (2,3.5) [fill=black] {};
			\node[vertex] (a4) at (3,3.5) [fill=black] {};
			\node[vertex] (a5) at (4,3.5) [fill=black] {};
			\node[vertex] (a6) at (5,3.5) [fill=black] {};
			\node[vertex] (a7) at (6,3.5) [fill=black] {};
			\node[vertex] (a8) at (7,3.5) [fill=black] {};			
			\node[vertex] (b1) at (0,2.5) [fill=black] {};
			\node[vertex] (b2) at (3,1.9) [fill=black] {}; 
			\node[vertex] (b3) at (3,2.5) [fill=black] {};
			\node[vertex] (b4) at (4,2.5) [fill=black] {};
			\node[vertex] (b5) at (5,2.5) [fill=black] {};
			\node[vertex,thick] (b6) at (6,2.5) [fill=white] {};
			\node[vertex] (b7) at (7,2.5) [fill=black] {};
			\node[vertex] (d1) at (1,2.5) [fill=black] {};
			\node[vertex,thick] (d2) at (2,2.5) [fill=white] {};
			\draw[thickedge] (p1)--(a1);
			\draw[thickedge] (p1)--(a2);
			\draw[thickedge] (p1)--(a3);
			\draw[thickedge] (p1)--(a4);
			\draw[thickedge] (p1)--(a8);			
            \draw[thickedge] (p1)--(a5);
            \draw[thickedge] (p1)--(a6);
            \draw[thickedge] (p1)--(a7);
			\draw[thickedge] (a1)--(a2)--(a3)--(a4);
			\draw[thickedge] (a4)--(a5);
			\draw[thickedge] (a5)--(a6);
			\draw[thickedge] (a6)--(a7);
			\draw[thickedge] (a5)--(a6);
			\draw[thickedge] (p1)--(a6);
			\draw[thickedge] (p2)--(b1);
			\draw[thickedge] (b4)--(b2); 
			\draw[thickedge] (b1) to[out=335,in=180] (3,1.6);  
			\draw[thickedge] (3,1.6) to[out=0,in=255] (b4);   	
			\draw[thickedge] (p2)--(b4);
			\draw[thickedge] (p2)--(b5);
			\draw[thickedge] (p2)--(b6);
			\draw[thickedge] (p2)--(b7);
			\draw[thickedge] (b1)--(b2)--(b3)--(b4);
			\draw[thickedge] (b4)--(b5);
			\draw[thickedge] (b5)--(b6);
			\draw[thickedge] (b6)--(b7);
			\draw[thickedge] (a1) to[out=90,in=180] (3.5,5.3);
			\draw[thickedge] (3.5,5.3) to[out=0,in=90] (a8);
            \draw[thickedge] (b1) to[out=290,in=180] (3.5,.7);
			\draw[thickedge] (3.5,.7) to[out=0,in=250] (b7);
            \draw[thickedge] (b1) to[out=-240,in=180] (3,5.8) to[out=0,in=-290] (a8);	
            \draw[thickedge] (d1)--(a1);
			\draw[thickedge] (d1)--(a2);
			\draw[thickedge] (d1)--(b1);
			\draw[thickedge] (d1)--(b2);
			\draw[thickedge] (d1)--(d2);
			\draw[thickedge] (d2)--(a2);
			\draw[thickedge] (d2)--(a3);
			\draw[thickedge] (d2)--(b2); 
			\draw[thickedge] (d2)--(b3);
			\draw[thickedge] (a1)--(b1);
			\draw[thickedge] (a3)--(b3);
			\draw[thickedge] (a4)--(b4);
			\draw[thickedge] (a5)--(b5);
			\draw[thickedge] (a4)--(b3);
			\draw[thickedge] (a5)--(b4);
			\draw[thickedge] (a6)--(b5);
			\draw[thickedge] (a6)--(b6);
			\draw[thickedge] (a7)--(b6);
			\draw[thickedge] (a8)--(b7);
			\draw[thickedge] (a8)--(a7);
			\draw[thickedge] (a7)--(b7);	
			\end{tikzpicture}
			\caption{The $5$-connected triangulation $ X $. The two white vertices are at distance $4$. The shaded region shows the mosaic graph around the degree $8$ vertex.} 
			\label{fig:t5cotherodd1}
    \end{figure}

For every $n\ge 12$, $n\ne 13$, the $n$-vertex triangulation $T_n^5$ is defined by Figure ~\ref{fig:t5cminwi} (these will be our minimizers of the Wiener index).
The following lemma is easy to verify and we leave the details to the reader.
\begin{lemma} \label{Tn5lista}
Assume that $n\ge 12$, $n\ne 13$.
\begin{enumerate}[label={\upshape (\alph*)}]
\item $T_n^5$ is a $5$-connected triangulation.
\item $T_n^5$ has diameter 3.
\item \label{tevendegseq} For $n$ even, the degree sequence of $T_{n}^5$ is $\frac{n}{2}-1, \frac{n}{2}-1, 5,\ldots,5$.
\item  \label{todddegseq} For $n$ odd, the degree sequence of $T_{n}^5$ is $\lfloor\frac{n}{2}\rfloor-1,\lfloor\frac{n}{2}\rfloor-2,6,6,5,\ldots, 5$. (For $n=15$, the terms in this sequence are {\sl not} in decreasing order.)
\item \label{eformula}  \begin{equation*} 
    W(T_n^{5}) =
    \begin{cases}
    \frac{5n^2}{4} -7n + 12, & \text{if $n$ is even}, \\
    \frac{5n^2}{4} -6n - \frac{9}{4}, & \text{if $n$ is odd}. \\
\end{cases}
 \end{equation*}
\end{enumerate}
\end{lemma}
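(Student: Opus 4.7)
The plan is to read off each property directly from the construction in Figure~\ref{fig:t5cminwi} and then derive the Wiener index formula in part~\ref{eformula} by invoking Lemma~\ref{la:basics}~\ref{case:lowerbound}, which gives an exact expression for $W(T)$ as soon as the diameter is at most $3$.

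First I would fix the labels from the figure: call the two ``apex'' vertices $p_1,p_2$ (shown above and below the central strip) and call the remaining $n-2$ vertices the \emph{strip vertices}. For $n$ even, the strip consists of two horizontal rows of $\frac{n}{2}-1$ vertices each, with $p_1$ adjacent to the top row, $p_2$ adjacent to the bottom row, each row triangulated into a path, and the two rows joined by vertical and diagonal edges that triangulate each quadrilateral square. For $n$ odd, the same picture is modified near one end by inserting the extra vertex $b_2$ (with $d_1,d_2$ the additional column), which accounts for the two degree-$6$ vertices. From these pictures, parts~\ref{tevendegseq} and~\ref{todddegseq} follow by inspection, and the handshake identity $(\frac{n}{2}-1)\cdot 2+5(n-2)=6n-12$ (respectively $(k-1)+(k-2)+2\cdot 6+5(2k-3)=6n-12$ with $n=2k+1$) provides a consistency check. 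For part~(a), that every face is a triangle is immediate from the drawing; $5$-connectivity then needs to be checked separately. The main obstacle here is a clean verification that there is no $4$-vertex cutset. I would argue: any cut separating $p_1$ from $p_2$ must hit both an $N(p_1)$-row of size $\ge \frac{n}{2}-1\ge 5$ and an $N(p_2)$-row of comparable size and still separate the strip, which by a short case analysis on contiguous blocks in the strip forces the cut to have at least $5$ vertices; cuts that leave $p_1,p_2$ on the same side are handled by the fact that the strip together with either apex is already $5$-connected.

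For part~(b), diameter $3$, I would first check diameter $\le 3$ and then exhibit a pair at distance exactly $3$. Every strip vertex is either in $N(p_1)$ or $N(p_2)$ or both; since every strip vertex in $N(p_1)$ has a neighbor in $N(p_2)$ (via a vertical or diagonal edge) and vice versa, any two strip vertices are at distance $\le 3$, any strip--apex pair is at distance $\le 2$, and $d(p_1,p_2)\le 2$ via any column. To see that the diameter is not $\le 2$, the leftmost and rightmost top-row vertices have no common neighbor, giving distance exactly $3$.

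Finally, for part~\ref{eformula}, with parts~(a) and~(b) in hand, Lemma~\ref{la:basics}~\ref{case:lowerbound} applies with equality, so
\[
W(T_n^5)=3\binom{n}{2}+6(n-2)-\tfrac{1}{2}\sum_{x\in V(T_n^5)}d^2(x).
\]
Plugging in the degree sequences from~\ref{tevendegseq} and~\ref{todddegseq} gives $\sum d^2(x)=2(\tfrac{n}{2}-1)^2+25(n-2)$ in the even case and $\sum d^2(x)=(\lfloor\tfrac{n}{2}\rfloor-1)^2+(\lfloor\tfrac{n}{2}\rfloor-2)^2+72+25(n-4)$ in the odd case; routine algebraic simplification then yields the two closed forms $\tfrac{5n^2}{4}-7n+12$ and $\tfrac{5n^2}{4}-6n-\tfrac{9}{4}$ respectively. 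The only real work is the connectivity argument in part~(a); everything else is mechanical, which justifies the authors' statement that the lemma is easy to verify.
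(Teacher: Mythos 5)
The paper offers no proof of this lemma (it is explicitly left to the reader), so the only question is whether your verification is sound. Your overall strategy --- read the degree sequence off Figure~\ref{fig:t5cminwi}, verify $5$-connectivity and diameter $3$ directly, and then obtain part~\ref{eformula} from the equality case of Lemma~\ref{la:basics}~\ref{case:lowerbound} --- is exactly the intended route, and your algebra is correct: with $\sum d^2=2(\tfrac n2-1)^2+25(n-2)$ in the even case and $(k-1)^2+(k-2)^2+72+25(n-4)=2k^2+44k+2$ in the odd case ($n=2k+1$), the two closed forms drop out.

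However, several of your concrete claims about the graph are false and would need repair. First, the two rows are cycles, not paths (each row wraps around, arcing past its apex); as paths the end vertices would have degree $4$ and the outer face would not be a triangle. Second, and more importantly, your diameter argument is wrong in both directions' details: there is \emph{no} vertex adjacent to both $p_1$ and $p_2$, since $N(p_1)$ is the top row and $N(p_2)$ is the bottom row and these are disjoint, so ``$d(p_1,p_2)\le 2$ via any column'' fails and in fact $d(p_1,p_2)=3$; meanwhile your proposed distance-$3$ witness (the leftmost and rightmost top-row vertices) is actually an adjacent pair, since the row is a cycle, and in any case both are adjacent to $p_1$. The pair $\{p_1,p_2\}$ is the correct witness that the diameter is exactly $3$. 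Third, your supporting claim for $5$-connectivity, that ``the strip together with either apex is already $5$-connected,'' is false: in that subgraph every vertex of the row not attached to the chosen apex has degree $4$, so the subgraph is at most $4$-connected. A correct connectivity argument instead splits on how many of $p_1,p_2$ lie in a putative cutset $S$ with $|S|\le 4$: if neither does, the surviving top (resp.\ bottom) vertices all reach $p_1$ (resp.\ $p_2$) and at least one of the $2(\tfrac n2-1)\ge 10$ cross edges survives; if one apex is removed, each surviving row vertex still reaches the other apex through at most two steps since only three strip vertices can be deleted; if both apexes are removed, only two strip vertices can be deleted and the antiprism formed by the two rows is $4$-connected. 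None of these errors changes the truth of the lemma, but as written the diameter and connectivity steps do not stand.
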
 
The $5$-connected triangulation $X$ of order $19$, defined by
  Figure ~\ref{fig:t5cotherodd1},
has
\begin{equation} \label{indexX}
W(X)=335=W(T_{19}^5).
\end{equation}
We will also show that $X$ is the {\sl only} $5$-connected triangulation that is not isomorphic to 
any $T_n^5$ and achieves the minimum Wiener index for its order. Note that as
$X$ is of diameter $4$, the lower bound in Lemma~\ref{la:basics}~\ref{case:lowerbound} cannot be used to compute $W(X)$. The different diameter, and also the different degree sequence, implies that 
$X\not\simeq T_{19}^5$.

We define the {\sl extended mosaic graph} $M_v^{\star}$ by adding edges to $M_v$. Given a $5$-connected triangulation $G$ and a vertex $v$ with mosaic graph $M_v$,
 we introduce the graph $M_v^{\star}$, on the vertex set of $M_v$, by setting
 $$E(M_v^{\star})=E(M_v)\cup\{w_iw_{i+1}: 1\le i\le d(v), w_iw_{i+1}\in E(G)\}.$$ 
 Note that $w_iw_{i+1}\in E(G)$ if an only if
 $d(u_{i+1})=5$.
Let ${\mathcal{R}}_v^{\star}$ denote the extension of
 ${\mathcal{R}}_v$ by adding to it the faces bounded by the $3$-cycles $u_{i+1}w_iw_{i+1}$ for all edges $w_iw_{i+1}\in E(G)$;   let $C_v$ denote the 
 boundary cycle  of 
 ${\mathcal{R}}_v^{\star}$ and let ${\mathcal{Q}}_v^{\star}$ denote the other domain defined by the cycle $C_v$.
 Now all vertices of $G$ that are not vertices of $M_v$ and all edges of $E(G)\setminus E(M_v^{\star})$ lie in the region ${\mathcal{Q}}_v^{\star}$ of the drawing of $G$.

We will use the following notation in the rest of the section. Given a $5$-connected triangulation $G$ and a vertex $v$, if $a,b$ are vertices of $C_v$, then
$P_v(a,b)$ denotes the path on the cycle $C_v$ from $a$ to $b$ that follows the  clockwise cyclic order.
(So if $C_v=(w_1,w_2,\ldots,w_{d})$ in clockwise cyclic order, then $P_v(w_1,w_2)$ is just the edge $w_1w_2$ with its endpoints, while $P_v(w_2,w_1)$ goes through all vertices of the cycle and misses only the edge $w_1w_2$.)

\begin{lemma}\label{la:mvs} Let $G$ be a $5$-connected triangulation of order  $n\ge 12$, 
and let 
$v$ be a vertex of $G$ with $d(v)=d$.
 Consider the extended mosaic graph $M_v^{\star}$. The following are true:
\begin{enumerate}[label={\upshape(\alph*)}]
 \item\label{case:edgesout}
Every vertex $z\in V(C_v)$ has an edge of $E(G)\setminus E(M_v^{\star})$ incident upon it. 
 \item\label{case:cutedge}   If for some $z_1,z_2\in V(C_v)$ we have
 $z_1z_2\in E(G)\setminus E(M_v^{\star})$, then $z_1z_2$ cuts  ${\mathcal{Q}}_v^{\star}$ into two subregions, each containing a vertex of $G$ in its interior,
 and $z_1,z_2\in\{w_1,w_2\ldots,w_d\}$.
 \item\label{case:wearedone} If $n$ is even and $d(v)=\frac{n}{2}-1$, then $G\simeq T_n^5$.
\end{enumerate}
\end{lemma}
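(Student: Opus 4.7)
I plan to handle the three parts in sequence, exploiting $5$-connectedness together with Lemma~\ref{la:basics}. For (a) the verification is a direct degree count: if $z = u_i \in V(C_v)$, then the five vertices $v, u_{i-1}, u_{i+1}, w_{i-1}, w_i$ are precisely the $M_v^\star$-neighbors of $u_i$, which falls short of $d(u_i)\ge 6$; and if $z = w_i$, then the $M_v^\star$-neighbors of $w_i$ form a subset of $\{u_i, u_{i+1}, w_{i-1}, w_{i+1}\}$ of size at most four (the $w_{i\pm 1}$ appearing only when the corresponding $u$ is smoothed out, i.e.\ has degree $5$), while $d(w_i)\ge 5$. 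So some edge at $z$ escapes $M_v^\star$.

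For (b) I will first rule out $u$-vertices as endpoints. Suppose $z_1 = u_i \in V(C_v)$ and $z_1 z_2 \in E(G)\setminus E(M_v^\star)$. If $z_2 = u_j$ with $j \notin \{i-1, i+1\}$, then $v u_i u_j$ is a $3$-cycle which, by Lemma~\ref{la:basics}\ref{case:cycles}, must bound a face; but the cyclic order of $v$'s neighbors puts further $u_k$'s on both sides of this cycle, impossible. If $z_2 = w_k$ with $k \notin \{i-1, i\}$, then $u_i v u_{k+1} w_k$ is a $4$-cycle whose only candidates for an interior edge are the diagonals $u_i u_{k+1}$ and $v w_k$; the first is forbidden by the argument just given, and $v w_k$ is not an edge since $N(v) = \{u_1, \ldots, u_d\}$ by Lemma~\ref{la:mosaic}. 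Thus Lemma~\ref{la:basics}\ref{case:cycles} is violated, and we conclude $z_1, z_2 \in \{w_1, \ldots, w_d\}$; since $w_i w_j \in E(M_v^\star)$ exactly when $|i-j|=1$, our chord satisfies $|i-j|\ge 2$.

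Next I handle the ``both regions contain a vertex'' claim. I will assume $\mathcal{A}$, one of the two subregions of $\mathcal{Q}_v^\star$ cut off by $w_i w_j$, contains no vertex of $G$ in its interior, and derive a contradiction. Repeating the $4$-cycle argument shows no $u$-vertex can lie in the interior of $P = P_v(w_i, w_j)$, since such a $u_k \in V(C_v)$ would demand a further neighbor but every candidate lying on $P$ or on the chord is now forbidden. Consequently $P$ passes through $w$-vertices only, each internal $w_k$ has exactly four $M_v^\star$-neighbors, and $\mathcal{A}$ is a triangulated polygon. If $|P|=2$ (so $j=i+2$ and $\mathcal{A}$ is the triangle $w_i w_{i+1} w_{i+2}$), then $w_{i+1}$'s neighbors reduce to $\{u_{i+1}, u_{i+2}, w_i, w_{i+2}\}$, so $d(w_{i+1})=4$, contradicting $\delta(G)\ge 5$. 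If $|P|\ge 3$, Lemma~\ref{la:ngon} supplies at least two ear triangles: an ear with interior apex $w_k$ would require $w_{k-1} w_{k+1} \in E(G)$, Lemma~\ref{la:basics}\ref{case:cycles} would make $w_{k-1} w_k w_{k+1}$ a face, and then $N(w_k)$ would reduce to $\{u_k, u_{k+1}, w_{k-1}, w_{k+1}\}$, the same contradiction as before. So both ears must lie in $\{w_i, w_j\}$; but the face of $G$ inside $\mathcal{A}$ incident to the chord $w_i w_j$ is unique and determines the ear triangle, so at most one of $w_i, w_j$ can be an apex, contradicting the ``at least two ears'' conclusion. This ear analysis, with the interaction between the tight degree constraints and the polygon-triangulation combinatorics, is where I expect the main effort.

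For (c), the hypothesis $d(v)=n/2-1$ gives $|V(M_v)|=2d+1=n-1$, so exactly one vertex $v'$ sits in the interior of $\mathcal{Q}_v^\star$. Part (b) then forbids every chord of $G$ inside $\mathcal{Q}_v^\star$, since any such chord would demand two distinct interior vertices. By part (a), the required extra edge at each vertex of $C_v$ must therefore end at $v'$, so $v'$ is adjacent to every vertex of $C_v$. Consequently $d(v')=|V(C_v)|=d+\#\{i:d(u_i)\ge 6\}$; combined with $\Delta(G)\le n/2-1=d$ from Lemma~\ref{la:mosaic}, this forces every $u_i$ to have degree $5$, so $C_v=(w_1,\ldots,w_d)$ and every remaining edge of $G$ is determined, giving $G\simeq T_n^5$.
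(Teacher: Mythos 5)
Your proof is correct and follows essentially the same route as the paper: a degree count in $M_v^{\star}$ against $\delta(G)\ge 5$ for (a), separating $3$- and $4$-cycles plus the ear count of Lemma~\ref{la:ngon} for (b), and the forced adjacencies of the unique vertex outside $M_v$ for (c). The only cosmetic differences are that in (b) you establish $z_1,z_2\in\{w_1,\ldots,w_d\}$ before the empty-subregion argument (the paper does it after), and you phrase the ear contradiction via the two chord-incident ears having to coincide rather than by picking an ear that avoids the chord; both reduce to the same violation of part (a).
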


\begin{proof}
Set $W=\{w_1,\ldots,w_{d}\}$ and $U=\{u_1,\ldots,u_{d}\}$.

\ref{case:edgesout}: Observe that $W\subseteq V(C_v)\subseteq W\cup U$.
Vertices in $W$ have degree at most $4$ in $M_v^{\star}$, and vertices of $U$ have degree $5$ in $M_v^{\star}$. If a vertex of $U$ has degree $5$ in $G$, then
it is not a vertex of $C_v$.  \ref{case:edgesout} follows.

\ref{case:cutedge}: Let$z_1,z_2\in V(C_v)$ where
 $z_1z_2\in E(G)\setminus E(M_v^{\star})$. Assume first that $z_1z_2$ cuts  $Q_v^{\star}$ into two subregions, one of which (say ${\mathcal{R}}$) contains no vertices in its interior. 
 We will show that $\mathcal{R}$ contains a (triangular) face $f$ such that the boundary of $f$ has two edges $e_1,e_2$ that are on the boundary of $\mathcal{R}$ and $z_1z_2\notin\{e_1,e_2\}$.
 This is obviously true when the boundary of $\mathcal{R}$ is a $3$-cycle.
 Otherwise the edges lying in the interior of ${\mathcal{R}}$ are edges of $E(G)\setminus E(M_v^{\star})$ that form a triangulation of
 ${\mathcal{R}}$, and by Lemma~\ref{la:ngon} this triangulation contains two faces with two boundary edges on the boundary of ${\mathcal{R}}$.
 One of these faces, $f$, does not have the edge $z_1z_2$ on its boundary.
Let $c$ be the common endvertex of the two edges $e_1,e_2$ on the boundary of ${\mathcal{R}}$. Then $c\notin\{z_1,z_2\}$ and $c$ cannot have an edge from $E(G)\setminus E(M_v^{\star})$ incident upon it, contradicting \ref{case:edgesout}. So $z_1z_2$ cuts $Q_v^{\star}$ into two subregions, both of which contains a vertex in its interior. Now assume to the contrary that $\{z_1,z_2\}\cap U\ne\emptyset$. We may assume that $z_1=u_1$. Then $z_2=u_{\ell}$ for some $3\le \ell\le d-1$ 
or $z_2=w_j$ for some $3\le j\le d-2$ ($j\ne 2$ and $j\ne d-1$, using Lemma~\ref{la:basics}~\ref{case:triangles} for edges $u_1u_2$ and $u_1u_d$). 
If $z_2=u_{\ell}$, then $u_1vu_{\ell}$ is a separating $3$-cycle (as $w_2$ and $w_d$ are in different regions of this cycle),
and if $z_2=w_j$ then $u_1w_ju_jv$ is a separating $4$-cycle (as $w_2$ and $w_d$  are in different regions); both of which contradict the $5$-connectedness of $G$.

\ref{case:wearedone}:
Assume now that $n$ is even and $d(v)=\frac{n}{2}-1$. Lemma~\ref{la:mosaic} gives $\Delta(G)=d(v)$. $G$ has exactly one vertex, say $x$, not in $M_v$,
and hence
in the region ${\mathcal{Q}}_v^{\star}$. Then the already proven parts  \ref{case:edgesout} and \ref{case:cutedge} imply that $E(G)\setminus E(M_v^{\star})=
\{z_1x:z_1\in V(C_v)\}$. As $W\subseteq C_v$,  $d(x)\ge |W|=d(v)=\Delta(G)$, we get $d(x)=\frac{n}{2}-1$ and $W=C_v$, and each edge of the form
$w_iw_{i+1}$ is an edge of $M_v^{\star}$. \ref{case:wearedone} follows.
\end{proof}

\begin{theorem} \label{t5cminwieven} Assume that $n\geq 12$ and $n$ is even. 
The triangulation  $ T_n^5 $, which was defined in Figure~\ref{fig:t5cminwi},  is the unique minimizer of the Wiener index among all $5$-connected triangulations  of order $n$. \end{theorem}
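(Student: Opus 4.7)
The plan is to use Lemma~\ref{la:basics}~\ref{case:lowerbound} as the lower bound on $W$, then to pin down both the degree sequence (via Lemma~\ref{la:proc}) and the structure itself (via Lemma~\ref{la:mvs}~\ref{case:wearedone}).

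First, let $T$ be any $5$-connected triangulation of order $n$. Since $T$ has $3n-6$ edges, we have $\sum_{x\in V(T)}d(x)=6(n-2)$, and since $T$ is $5$-connected, $5\le d(x)$ for every $x$; combined with Lemma~\ref{la:mosaic}, we get $5\le d(x)\le \tfrac{n}{2}-1$ for every vertex $x$. I would then apply Lemma~\ref{la:proc} with $b=5$, $c=\tfrac{n}{2}-1$, to conclude that $\sum d^2(x)$ is maximized when at most one value lies strictly between $5$ and $\tfrac{n}{2}-1$. A short arithmetic check (using $\sum d(x)=6n-12$) shows that the only integer solution with this property is the sequence $\tfrac{n}{2}-1,\tfrac{n}{2}-1,5,5,\ldots,5$, i.e. exactly the degree sequence of $T_n^5$ from Lemma~\ref{Tn5lista}~\ref{tevendegseq}.

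Next, Lemma~\ref{la:basics}~\ref{case:lowerbound} gives
\[
W(T)\ \ge\ 3\binom{n}{2}+6(n-2)-\tfrac{1}{2}\sum_{x\in V(T)}d^2(x)\ \ge\ 3\binom{n}{2}+6(n-2)-\tfrac{1}{2}\bigl[2(\tfrac{n}{2}-1)^2+25(n-2)\bigr],
\]
and a routine simplification shows the right-hand side equals $\tfrac{5n^2}{4}-7n+12=W(T_n^5)$ from Lemma~\ref{Tn5lista}~\ref{eformula}. This proves that $T_n^5$ is a minimizer.

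For uniqueness, suppose $W(T)=W(T_n^5)$. Then equality must hold in both inequalities above. Equality in Lemma~\ref{la:proc} forces $T$ to have the degree sequence $\tfrac{n}{2}-1,\tfrac{n}{2}-1,5,\ldots,5$; in particular, $T$ contains a vertex $v$ with $d(v)=\tfrac{n}{2}-1$. Applying Lemma~\ref{la:mvs}~\ref{case:wearedone} directly yields $T\simeq T_n^5$, completing the proof. The main obstacle is really just assembling the pieces in the right order; the heavy lifting has all been absorbed into the preceding structural lemmas, and in particular into Lemma~\ref{la:mvs}~\ref{case:wearedone}, which does the work of converting degree information into a unique isomorphism type. (Equality in Lemma~\ref{la:basics}~\ref{case:lowerbound} forcing diameter at most $3$ is a welcome sanity check, but is not needed in the argument.)
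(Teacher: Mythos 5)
Your proposal is correct and follows essentially the same route as the paper: bound $\Delta(T)\le \frac{n}{2}-1$ via Lemma~\ref{la:mosaic}, identify the square-sum-maximizing degree sequence via Lemma~\ref{la:proc}, apply the lower bound of Lemma~\ref{la:basics}~\ref{case:lowerbound} together with the diameter-$3$ property of $T_n^5$, and derive uniqueness from Lemma~\ref{la:mvs}~\ref{case:wearedone}. Your closing observation that the diameter condition is not needed for the uniqueness step is also consistent with how the paper argues.
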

\noindent

\begin{proof} 
Let $n\ge 12$ be even and assume
$T$ is a $5$-connected triangulation on $n=2k$ vertices ($k\ge 6$). The  degree sum of $T$ is $2(3n-6)=6n-12$, and 
Lemma~\ref{la:mosaic} gives $\Delta(T)\leq \frac{n}{2}-1$. By Lemma~\ref{la:proc} the integer sequence $y_1,\ldots, y_n$ that sums to $6n-12$, satisfies $5\le y_i\le \frac{n}{2}-1$ and has the largest sum of squares
is the sequence
$\frac{n}{2}-1,\frac{n}{2}-1,5,5,\ldots,5$, which is exactly the degree sequence of $T_n^5$ by Lemma~\ref{Tn5lista}~\ref{tevendegseq}. As $T_n^5$ has diameter $3$, by Lemma~\ref{la:basics}~\ref{case:lowerbound}
$T_n^5$ indeed has the minimum Wiener index among all $5$-connected $n$-vertex triangulations. We know that the degree sequence of $T$ is the same as
the degree sequence of $T_n^5$, so $T\simeq T_n^5$ follows from Lemma~\ref{la:mvs}~\ref{case:wearedone}.
\end{proof}

To characterize extremal 
triangulations
of odd order, we need a bit more information about their structure.

\begin{lemma}\label{la:nosuch}
There are no $5$-connected triangulations on $21$ vertices with degree sequence $8, 8, 8, 5, ... , 5$.
\end{lemma}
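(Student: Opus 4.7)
Suppose for contradiction that such $T$ exists. First I combine parts~\ref{case:edgecycle}, \ref{case:nonedgecycle}, and \ref{case:cycles} of Lemma~\ref{la:basics} to conclude that any two vertices of $T$ have at most $2$ common neighbors: for an edge $xy$ the two triangular faces on $xy$ already supply two common neighbors, and any third common neighbor $z$ would make $xyz$ a non-facial $3$-cycle; for non-adjacent $x,y$, three common neighbors would produce three distinct $4$-cycles through the pair, violating part~\ref{case:nonedgecycle}. I then pick any degree-$8$ vertex $v$; Lemma~\ref{la:mosaic} gives $|V(M_v)|=17$, so exactly $4$ vertices lie outside $M_v$, forming a set $O_v$. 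By Lemma~\ref{la:mvs}, every $u_i\in U_v$ with $d(u_i)=5$ has all its neighbors inside $M_v$, while every $u_i$ with $d(u_i)>5$ has all of its $d(u_i)-5$ extra neighbors in $O_v$. Writing $U_8, W_8, O_8$ for the numbers of degree-$8$ vertices in $U_v, W_v, O_v$, I have $U_8+W_8+O_8=2$, and the argument splits on $U_8$.

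If $U_8=2$, two degree-$8$ neighbors $u_i,u_j$ of $v$ each have $3$ neighbors in the $4$-element set $O_v$, so $|N(u_i)\cap N(u_j)\cap O_v|\ge 2$; together with the common neighbor $v$, this yields at least $3$ common neighbors of $u_i,u_j$, contradicting the bound. If $U_8=1$, I write $u_1$ for the unique degree-$8$ neighbor and let $z$ denote the other degree-$8$ vertex, which lies in $W_v\cup O_v$; I enumerate sub-cases by the position of $z$ (for $z=w_a$, by $|a-1|\bmod 8$; for $z\in O_v$, by $|N(z)\cap N(u_1)\cap O_v|$) and count $|N(u_1)\cap N(z)|$ using the lower bound $|N(u_1)\cap N(z)\cap O_v|\ge |N(u_1)\cap O_v|+|N(z)\cap O_v|-4$. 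Each sub-case yields either more than $2$ common neighbors or a configuration incompatible with the planar structure of $\mathcal{Q}_v^{\star}$.

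The most involved case is $U_8=0$, which I split on $(W_8, O_8)\in\{(0,2),(1,1),(2,0)\}$. When $(W_8,O_8)=(0,2)$, summing degrees in $W_v$ and in $O_v$ yields $2e_{WW}'+e_{WO}=8$ and $2e_{OO}+e_{WO}=26$, where $e_{WW}'$, $e_{WO}$, $e_{OO}$ count respectively the $W_v$--$W_v$ chord edges of $C_v$, the $W_v$--$O_v$ edges, and the $O_v$--$O_v$ edges; since $e_{OO}\le\binom{4}{2}=6$, the second equation forces $e_{WO}\ge 14$, contradicting the first. When $(W_8,O_8)=(1,1)$, the same degree equations force $e_{OO}=6$, so $O_v$ induces $K_4$; in any planar embedding of $K_4$ one vertex is strictly interior to the triangle of the other three and cannot reach $W_v$ without crossing a $K_4$-edge, yet each $x\in O_v$ still needs at least two further neighbors in $W_v$ — contradiction. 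When $(W_8,O_8)=(2,0)$, let $w_a, w_b\in W_v$ be the degree-$8$ vertices; the common-neighbor bound forces $|N(w_a)\cap O_v|+|N(w_b)\cap O_v|\le 6$, so $e_{WO}\in\{8,10,12\}$; for each admissible chord configuration and each distribution of the $4$ interior vertices of $O_v$ into the sub-disks that the chord(s) cut $\mathcal{Q}_v^{\star}$ into, I use the identity that a triangulated $n$-disk with $k$ interior vertices has exactly $n+3k-3$ interior edges, together with the forced degree-$5$ condition on each interior $x\in O_v$, to derive a contradiction.

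\textbf{Main obstacle.} The delicate step is the sub-case $(W_8,O_8)=(2,0)$: the arithmetic alone allows solutions, and the contradiction appears only after combining the sub-disk decomposition induced by the chord(s), the common-neighbor bound on pairs of boundary vertices, and the degree-$5$ condition on the interior vertices of $\mathcal{Q}_v^{\star}$.
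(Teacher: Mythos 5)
Your overall architecture is sound and the tools you invoke are legitimate: the ``at most two common neighbors'' bound does follow from Lemma~\ref{la:basics}\ref{case:cycles},\ref{case:triangles},\ref{case:nonedgecycle}, and your cases $U_8=2$, $(W_8,O_8)=(0,2)$ and $(W_8,O_8)=(1,1)$ are correctly and completely closed (indeed, the inclusion--exclusion argument giving three common neighbors of two degree-$8$ vertices of $U$ is slicker than the paper's handling of the corresponding case, which instead counts $|E(X,W)|$ against the deficiency of $w_1,w_8$ in $M_v^\star$). But as written the proposal has a genuine gap: the two remaining cases are asserted, not proved. In the case $U_8=1$ you say only that an enumeration of positions of the second degree-$8$ vertex $z$ ``yields either more than $2$ common neighbors or a configuration incompatible with the planar structure,'' with no argument for any sub-case; and in the sub-case $(W_8,O_8)=(2,0)$ you concede yourself that the arithmetic admits solutions and that the contradiction requires combining the chord-induced sub-disk decomposition with degree conditions --- which is precisely the part you do not carry out. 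These are not routine omissions: they are where the real content of the lemma lives. The paper closes them with arguments you would need analogues of, e.g.\ for $U_8=1$ it exploits that the three $O_v$-neighbors of $u_1$ form an induced path whose edges bound faces with $u_1$, then splits on the component structure of the graph induced on the four outside vertices and derives contradictions from separating $4$-cycles and a parity count ($2c_1=6\chi$); for your $(2,0)$ sub-case it shows the outside set induces either a connected graph (forcing two degree-$8$ vertices $w_\ell,w_q$ of $W$ and a separating $4$-cycle $w_\ell x_i w_q x_j$) or a $K_1\cup K_3$ (forcing a $K_4$ and hence a separating triangle). Until you supply arguments of comparable substance for your two open cases, the proof is incomplete.
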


\begin{proof} 
Assume that $G$ is a $5$-connected triangulation on $21$ vertices with degree sequence $8,8,8,5,\ldots 5$. 
Let $v$ be a degree $8$ vertex, and let $x_1,x_2,x_3,x_4$ be the vertices in
$V(G)\setminus V(M_v^{\star})$. Set $X=\{x_1,x_2,x_3,x_4\}$, $U=\{u_i: 1\le i\le 8\}$ and
$W=\{w_i:1\le i\le 8\}$. 
Let $b$ be the number of connected components of the subgraph of $G$ induced by $X$ and $D_i$ be the component containig $x_i$, $c_i=|N(x_i)\cap X|$ and $\chi$ be the number of vertices of degree $8$ in $X$. 

Clearly $W\subseteq V(C_v)$, and by Lemma~\ref{la:mvs}~\ref{case:edgesout} and \ref{case:cutedge} all vertices of $V(C_v)\cap U$ have degree $8$ (consequently $|V(C_v)\cap U|\le 2$). Assume that $z\in V(C_v)\cap U$; then $|N(z)\cap X|=3$. Let $\{x_i,x_j,x_k\}= N(z)\cap X$, then 
without loss of generality $x_ix_jx_k$ is a path in $G$ whose edges form faces with the edges $x_iz,x_jz,x_kz$. Moreover, if $e,f$ are the two edges on $C_v$ that are incident upon $z$, then the cyclic order of the edges that lie in or on the boundary of $\mathcal{Q}_v^{\star}$ around $z$ is $e,zx_i,zx_j,zx_k,f$ or  $f,zx_i,zx_j,zx_k,e$; otherwise
one of the triangles $zx_ix_j$ or $zx_jx_k$ is a separating triangle, which is a contradiction. Finally, $x_ix_k\notin E(G)$, as otherwise $zx_ix_k$ is a separating triangle.

Assume first that $U\cap V(C_v)\ne\emptyset$; without loss of generality $u_1\in U\cap V(C_v)$, $u_1$ is 
adjacent to $x_2,x_3,x_4$ and $x_2x_3x_4$ is a path in $G$, consequently $x_2x_4\notin E(G)$.
We have that either $b=2$ and $D_1=\{x_1\}$, or $b=1$.
Without loss of generality we may assume that the cyclic order of edges around $u_1$ in $\mathcal{Q}_v^{\star}$ is $u_1w_1, u_1x_2,u_1x_3,u_1x_4,u_1w_8$. As $u_1w_1,u_1x_2$ (and also $u_1x_4,u_1w_8$) bound a common face, we have
$w_1x_2,w_8x_4\in E(G)$. Let $\mathcal{P}^{\star}$ be the region we get if we leave out from $\mathcal{Q}^{\star}_v$ the faces with $u_1$ on their boundary.

Consider the case when $|U\cap V(C_v)|=2$, i.e. for some $j\ne 1$ the vertex $u_j$ also has degree $8$. If $N(u_j)\cap X=N(u_1)\cap X=\{x_2,x_3,x_4\}$ (which must happen when $b=2$), we have that the path $x_2x_3x_4$ is induced in $D_2$. But then $u_1x_2u_jx_4$ is a separating $4$-cycle, which is a contradiction. Therefore we must have $b=1$, and without loss of generality for some $t\in\{2,3\}$,
$N(u_j)\cap X=\{x_1,x_t,x_{t+1}\}$ where $x_1x_t\in E(G)$ and $x_1x_{t+1}\notin E(G)$. 
This gives $|E(X)|\ge 3$, $|E(X,U)|=6$, and
(since the vertices of $X$ have degree $5$) $|E(X,W)|=20-6-2|E(X)|\le 8$. On the other hand, since $b=1$, Lemma~\ref{la:mvs}~\ref{case:cutedge} implies
that the set of edges in $E(G)\setminus E(M_v^{\star})$ that are incident upon $W$ form the set $E(X,W)$, and (as $w_1$ and $w_8$ have degree $3$ in $M_v^{\star}$)
consequently $|E(X,W)|\ge 10$, a contradiction. Therefore we must have $|U\cap V(C_v)|<2$, i.e. $U\cap V(C_v)=\{u_1\}$. 

Now we have that $U\cap V(C_v)=\{u_1\}$.
Assume that $b=2$, so $D_1=\{x_1\}$. 
Let $s$ and $t$ be the smallest and largest indices such that $w_s,w_t\in N(x_1)$;
$1\le s<s+4\le t\le 8$. The path $w_sx_1w_t$ cuts the region
$\mathcal{Q}_v^{\star}$ into two regions $\mathcal{Q}_1$ and $\mathcal{Q}_2$, where $\mathcal{Q}_1$ has $u_1$ on its boundary. Therefore $x_2,x_3,x_4$ lie inside $\mathcal{Q}_1$, 
$w_sw_t\in E(G)\setminus E(M_v^{\star})$, and by Lemma~\ref{la:mvs}~\ref{case:cutedge} for each $i:s\le i\le t$ $x_1w_i\in E(G)$. 
If $s\ne 1$ then $\{w_{s-1},w_{s+1},u_s,u_{s+1},w_t,x_1\}\subseteq N(w_s)$, so $w_s$ must have degree $8$. If $s=1$, then $\{ u_1, u_2, w_2, x_2, w_t, x_1\}\subseteq N(w_s)$, 
so $w_s$ has degree $8$. Similar arguments imply that $w_t$ also has degree $8$. But then $u_1,v,w_s,w_t$ all have degree $8$, a contradiction.
So we must have $b=1$, i.e. $x_1$ is connected to at least one other vertex in $X$. If $x_1$ is connected to both $x_2$ and $x_4$, then (as the edges $x_1x_2$ and $x_1x_4$ lie in
$\mathcal{P}^{\star}$) $u_1x_2x_1x_4$ is a separating $4$-cycle, which is a contradiction. We may assume $x_1x_4\notin E(G)$, so $c_1\in\{1,2\}$.
Then  $|E(X,W)|=13+3\chi-2c_1$. Since the number of degree $8$ vertices in $W$ is $1-\chi$,
$|E(X,W)|=10+3(1-\chi)=13-3\chi$. This gives $2c_1=6\chi$, so $3$ divides $c_1$, which is a contradiction. Thus we must have $U\cap V(C_v)=\emptyset$.

Therefore $W=V(C_v)$ and every vertex in $W$ has degree $4$ in $M_v^{\star}$, so every vertex in $W$ has either one or $4$ edges incident upon it from $E(G)\setminus E(M_v^{\star})$.
Set $F=E(W)\setminus E(M_v^{\star})$ and $m_x=|E(X)|$.
We have $2|F|+|E(W,X)|=\sum_{z\in W}(d(z)-4)=14-3\chi$ and $|E(X,W)|=(\sum_{z\in X}d(z))-2m_x=20+3\chi-2m_x$.

If $\chi=2$, then all vertices of $W$ have at most one neighbor in $X$. Since the two vertices in $X$ that have degree $8$ have at least $5$ neighbors in $W$, this implies that $5+5\le |W|=8$, a contradiction. Therefore $\chi\in\{0,1\}$; at least one vertex in $W$ has degree $8$.

Suppose $b=1$. Then $3\le m_x\le 5$, and $ |E(X,W)|=20-2m_x+3\chi$. On the other hand by Lemma~\ref{la:mvs} ~\ref{case:cutedge} $F=\emptyset$, so
$|E(X,W)|=14-3\chi$. This gives $m_x=3+3\chi$, consequently $\chi=0$, $m_x=3$, and exactly two vertices (say $w_{\ell},w_q$) in $W$ have degree $8$. But then at least one of the $4$-cycles of the form $w_{\ell}x_iw_qx_jw_{\ell}$ is separating, which is a contradiction. Thus we must have $b>1$. 

Since $b>1$, we must have either that the components spanned by $X$ are a $K_1$ and a $K_3$, or
the subgraph generated by $X$ has exactly $4-b$ edges (as all of its components are trees). 
In the former case $|E(X,W)|=14+3\chi$, in the latter $|E(X,W)|=12+2b+3\chi\ge16+3\chi>14-3\chi\ge |E(X,W)|$, which is a contradiction. 
Therefore the components spanned by $X$ are a $K_1$ and a $K_3$. We get that $2|F|+14+3\chi= 14-3\chi$, which gives $\chi=0$, and $F=\emptyset$.
So exactly two vertices (say $w_{\ell},w_q$) in $W$ have degree $8$, and $E(W,V(G))\setminus E(M_v^{\star})=E(X,W)$.
Without loss of generality the $K_3$ in $X$ is formed by the vertices $x_2,x_3,x_4$.
But then $X\subseteq N(w_{\ell})$, so the subgraph generated by  $\{x_2,x_3,x_4,w_{\ell}\}$ is a $K_4$.
This is a contradiction, as one of the triangles $w_{\ell}x_2x_3$, $w_{\ell}x_3x_4$, $w_{\ell}x_2x_4$ is separating, contradicting the $5$-connectedness of $G$.
\end{proof}

\begin{lemma}\label{la:mvsdeg} Let $n\ge 15$ be odd, and let $G$ be a $5$-connected triangulation of order $n$ with degree sequence $d_1\ge d_2\ge d_3\ge\ldots\ge d_n$. If
$W(G)\le W(T_n^5)$, then $d_1=\lfloor\frac{n}{2}\rfloor-1$, and one of the following holds:
 \begin{enumerate}[label={\upshape (\alph*)}]
 \item\label{case:k11} $n=23$ and the degree sequence of $G$ is $10,8,8,5,\ldots,5$
\item\label{case:main} $d_2\ge \lfloor\frac{n}{2}\rfloor-2$, $d_3+d_4\le 12$, and consequently $d_3\le7$, $d_4\le 6$ and $d_5=5$.
\end{enumerate}
\end{lemma}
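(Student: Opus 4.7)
The plan is to convert the hypothesis $W(G)\le W(T_n^5)$ into a lower bound on $\sum_v d^2(v)$ and then restrict the degree sequence by combining the upper bound $\Delta(G)\le \lfloor n/2\rfloor -1=:D$ from Lemma~\ref{la:mosaic} with the ``extremes are best'' principle of Lemma~\ref{la:proc}. Since $T_n^5$ has diameter $3$ by Lemma~\ref{Tn5lista}, the bound of Lemma~\ref{la:basics}~\ref{case:lowerbound} is tight for $T_n^5$; applying the same bound to $G$ transforms the hypothesis into $\sum_{v\in V(G)}d^2(v)\ge \sum_{v\in V(T_n^5)}d^2(v)$, where the right-hand side is computed from the degree sequence in Lemma~\ref{Tn5lista}~\ref{todddegseq}.

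First I would show $d_1=D=(n-3)/2$. The inequality $d_1\le D$ is already given by Lemma~\ref{la:mosaic}, so suppose $d_1\le D-1=(n-5)/2$; then every $d_i\in[5,(n-5)/2]$ with $\sum d_i=6n-12$. Lemma~\ref{la:proc} reduces the maximum of $\sum d_i^2$ to a single optimal integer sequence (most entries at the cap, the rest at~$5$, at most one intermediate), and a short case analysis in $n$ shows this maximum is strictly smaller than $\sum d^2(T_n^5)$ for every odd $n\ge 15$ except $n=21$. At $n=21$ the maximum is attained exactly by the sequence $8,8,8,5,\dots,5$, but Lemma~\ref{la:nosuch} forbids that sequence. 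Hence $d_1=D$.

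Second, with $d_1=D$ fixed I would analyze $d_2$. If $d_2\le D-2=(n-7)/2$, applying Lemma~\ref{la:proc} to the tail $(d_2,\dots,d_n)$ with sum $6n-12-D$ and cap $D-2$ again bounds $\sum d_i^2$ strictly below $\sum d^2(T_n^5)$ for every odd $n\ge 15$, with the single exception $n=23$, where equality forces the degree sequence $10,8,8,5,\ldots,5$---this is conclusion~(a). Otherwise $d_2\ge D-1=\lfloor n/2\rfloor-2$, giving the first assertion of~(b). The remaining assertions of~(b) are purely arithmetic: $d_1+d_2\ge 2D-1=n-4$ combined with all $d_i\ge 5$ yields $\sum_{i\ge 3}(d_i-5)\le 2$, which immediately forces $d_3+d_4\le 12$, $d_3\le 7$, $d_4\le 6$, and $d_5=5$.

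The main obstacle is the pair of boundary orders $n=21$ and $n=23$, where the coarse Lemma~\ref{la:proc} upper bound on $\sum d^2(v)$ exactly matches $\sum d^2(T_n^5)$, so the degree-sum-of-squares argument alone cannot conclude. At $n=21$ the obstruction is defeated by the structural non-existence result in Lemma~\ref{la:nosuch}, while at $n=23$ the matching sequence $10,8,8,5,\ldots,5$ really is degree-sequence compatible and must be kept as the alternative conclusion~(a) of the lemma statement.
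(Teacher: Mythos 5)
Your proposal is correct and follows essentially the same route as the paper's proof: converting $W(G)\le W(T_n^5)$ into $\sum_v d^2(v)\ge \sum_{v\in V(T_n^5)}d^2(v)$ via Lemma~\ref{la:basics}~\ref{case:lowerbound} and the diameter-$3$ property of $T_n^5$, capping $d_1$ (and then $d_2$) with Lemma~\ref{la:mosaic} and Lemma~\ref{la:proc}, invoking Lemma~\ref{la:nosuch} to dispose of the $n=21$ borderline sequence $8,8,8,5,\ldots,5$, retaining the $n=23$ sequence $10,8,8,5,\ldots,5$ as case~\ref{case:k11}, and finishing case~\ref{case:main} by the degree-sum arithmetic. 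The only cosmetic difference is that the paper dispatches $k=7$ and $k=8$ up front by listing their few admissible degree sequences, whereas you absorb them into the general case analysis, where they are vacuous or immediate.
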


\begin{proof} Set $n=2k+1$, then $k=\lfloor\frac{n}{2}\rfloor\ge 7$.
As $d_1\le k-1$, the only possible degree sequence for $k=7$ is $(6,6,6,5,\ldots,5)$, which satisfies the conclusion. Hence we may assume $k\ge 8$.
As $T_n^5$ has diameter $3$, by Lemma~\ref{la:basics}~\ref{case:lowerbound} and Lemma~\ref{Tn5lista}~\ref{todddegseq} we must have
$$\sum_{x\in V(G)}d^2(x)\ge \sum_{x\in V(T_n^5)}d^2(x)=(k-1)^2+(k-2)^2+2\cdot 6^2+5^2(2k-3)=2k^2+44k+2.$$

Assume first that $d_1\le k-2$.

If $k=8$, the only sequence possible is $6,6,6,6,6,5,\ldots,5$ whose sum of squares is $480<2\cdot 8^2+44\cdot 8+2$, which is a contradiction.

If $k=9$,  then
$$\sum_{x\in V(G)}d^2(x)\le 3\cdot 7^2+6^2+15\cdot 5^2=558<560=2\cdot 9^2+44\cdot9+2,$$
a contradiction.

If $k=10$, then By Lemma~\ref{la:nosuch} 
$$\sum_{x\in V(G)}d^2(x)\le 2\cdot 8^2+ 7^2+6^2+17\cdot 5^2=638<642=2\cdot 10^2+44\cdot10+2,$$
a contradiction.

If $k\ge 11$, then
$$\sum_{x\in V(G)}d^2(x)\le 2(k-2)^2+8^2+5^2(2k-2)=2k^2+42k+22<2k^2+44k+2,$$
a contradiction. So we proved that $d_1=k-1$.
If $k=8$, the only sequences possible are $7,7,6,5,\ldots,5$ and   $7,6,6,6,5\ldots,5$, which satisfy the conclusion. Hence we may assume $k\ge 9$.

Assume next that $d_2\le k-3$.
If $k=9$, the only sequence possible is $8,6,6,6,6,5,\ldots,5$ with degree square sum  $558<2\cdot 9^2 +44\cdot 9+2$.  
If $k=10$, 
$$\sum_{x\in V(G)}d^2(x)\le 9^2+2\cdot 7^2+6^2+17\cdot 5^2=640<2\cdot 10^2+44\cdot 10+2.$$
If $k\ge 11$, then
$$\sum_{x\in V(G)}d^2(x)\le (k-1)^2+(k-3)^2+8^2+5^2(2k-2)=2k^2+42k+24\le 2k^2+44k+2.$$
with equality only if $k=11$ and the degree sequence is $10,8,8,5,\ldots,5$, i.e. $G$ satisfies \ref{case:k11}.
Therefore we may assume that $d_2\ge k-2$.

As we already know that  $d_1= k-1$ and $d_2\ge k-2$,  using that $\sum d_i=6(2k-1)$, 
$d_3+d_4\le 6(2k-1)-(2k-3)-5(2k-3)=12$, so from $d_4\le d_3$ we get $d_4\le 6$ and from $d_4\ge 5$ we get $d_3\le 7$.
If $d_4=5$ then $d_5=5$, otherwise we have that $d_4=d_3=6$ and
$d_5\le 6(2k-1)-(2k-3)-12-5(2k-4)=5$. 
\end{proof}

\begin{lemma}\label{la:mvsodd}
Let $n\geq 15$ be odd, $k=\lfloor\frac{n}{2}\rfloor$, and let $G$ be a $5$-connected triangulation of order $n$, with $W(G)\le W(T_n^5)$. Let 
$v$ be a vertex with $d(v)=\Delta(G)=:\Delta$.
 Consider the extended mosaic graph $M_v^{\star}$, and the sets $W=\{w_1,\ldots,w_{\Delta}\}$ and $U=\{u_1,\ldots,u_{\Delta}\}$. Let $x_1,x_2$ denote  the
 two  vertices not in
 $M_v^{\star}$.
 The following statements are true:
  \begin{enumerate}[label={\upshape (\alph*)}]
  \item\label{case:5degs}  $\Delta=k -1$, at most $4$ degrees of $G$ are larger than $5$, and for the largest $4$ degrees $\Delta\ge d_2\ge d_3\ge d_4$ of $G$ we have either
  $d_2\ge k-2$, $d_3\le 7$, $d_4\le 6$ and $d_3+d_4\le 12$, or $n=23$, $d_2=d_3=8=k-3$, $d_4=5$.
  \item\label{case:paths} For $i\in \{1,2\}$, there are vertices $a_i,b_i\in V(C_v)$ such that
$N(x_i)\setminus\{x_{3-i}\}=V(P_v(a_i,b_i))$. (We refer to these  $a_i,b_i$ vertices in the forthcoming claims.) Furthermore, if $c\in V(P_v(a_1,b_1))\cap V(P_v(a_2,b_2))$,
 then $c=a_1=b_2$ or $c=a_2=b_1$.
\item\label{case:pathinw} 
$C_v=w_1w_2\ldots w_{\Delta}$,
$d(x_i)\le \Delta-1$, and if $x_1x_2\notin E(G)$ then $d(x_i)\le \Delta-2$ and $n\ge 19$. 
\item\label{case:xy} If $x_1x_2\in E(G)$, then $G\simeq T_n^5$
\item\label{case:noxy} If $x_1x_2\notin E(G)$ then $a_1b_1,a_2b_2\in E(G)\setminus E(M_v^{\star})$. Moreover, for $i\in\{1,2\}$,
if  $c_i\in V(P_v(b_i,a_{3-i}))$ and $z\in V(G)\setminus\{x_1,x_2\}$, such that
$c_iz\in E(G)\setminus E(M_v^{\star})$, then $z\in V(P_v(b_{3-i},a_i))$,
and the neighbors of $c_i$ in $P_v(b_{3-i},a_i)$ form a consecutive sequence of vertices on this path. 
\item\label{case:allnoteq}
If $x_1x_2\notin E(G)$, then $a_1= b_2$ or $a_2= b_1$.
\item\label{case:twonoteq}  If $x_1x_2\notin E(G)$, then $a_1=b_2$ and $a_2= b_1$.
\item\label{case:alleq} If $x_1x_2\notin E(G)$, then $G\simeq X$ and $W(G)=W(T_{19}^5)$.
\end{enumerate}
\end{lemma}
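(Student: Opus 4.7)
The plan is to use parts (b), (c), (e), (g) to pin down the structure of $G$ almost completely, invoke the degree-sequence bound of part (a) to force $n=19$, and then observe that the unique remaining graph is $X$.

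I would begin by setting $p=d(x_1)$ and $q=d(x_2)$. Part (b), combined with the hypothesis $x_1x_2\notin E(G)$, gives $N(x_i)=V(P_v(a_i,b_i))$, so the lengths of the two arcs are $p$ and $q$. Part (g) identifies $a_1=b_2$ and $b_1=a_2$, so these arcs share exactly the two corners $z_1=a_1=b_2$ and $z_p=b_1=a_2$; hence $p+q=|V(C_v)|+2=\Delta+2=k+1$, and the bounds $p,q\ge 5$ (5-connectivity) together with $p,q\le k-3$ (part (c)) yield $p,q\le k-4$ and $k\ge 9$. Part (c) also tells us $C_v=w_1w_2\cdots w_{k-1}$, so every $u_i$ has degree $5$ in $G$, and the identity $\sum_z d(z)=6(n-2)=12k-6$ gives $\sum_{i=1}^{k-1} d(w_i)=5k-1$.

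Next I would carry out a tight degree analysis of the $w_i$'s. Each $w_i$ has degree $4$ in $M_v^{\star}$. Part (e) supplies the chord $z_1z_p$ and restricts any extra edge at $z_1$ (resp.\ $z_p$) outside $\{x_1,x_2\}$ to go only to $z_p$ (resp.\ $z_1$), which forces $d(z_1)=d(z_p)=4+2+1=7$. The $k-3$ interior $w_i$'s then have degrees summing to $5(k-3)$; since each is at least $5$, all equal $5$ exactly, so no chord beyond $z_1z_p$ is present. Consequently the degree sequence of $G$ is $(k-1,\,7,\,7,\,\max(p,q),\,\min(p,q),\,5,\ldots,5)$ with $p,q\le k-4$. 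Comparing with part (a): in the generic case $d_2\ge k-2$, but $d_2=\max(7,\max(p,q))\le \max(7,k-4)$, which is strictly less than $k-2$ whenever $k\ge 10$; and the special $n=23$ case needs $d_2=d_3=8$, also incompatible with $d_2=7$. Hence $k=9$, so $p+q=10$ and $5\le p,q\le 5$ force $p=q=5$, giving $n=19$.

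The remaining structure is then fully determined up to relabeling: $v$ joined to $u_1,\ldots,u_8$ inside its mosaic, $C_v=w_1\cdots w_8$ carrying the single chord $w_1w_5$, and $x_1,x_2$ placed in the two subregions of $\mathcal{Q}_v^{\star}$ with $N(x_1)=\{w_1,w_2,w_3,w_4,w_5\}$ and $N(x_2)=\{w_5,w_6,w_7,w_8,w_1\}$. This graph is exactly $X$ of Figure~\ref{fig:t5cotherodd1}, so $G\simeq X$; the identity $W(X)=W(T_{19}^5)$ stated in (\ref{indexX}) then finishes the proof. The main obstacle is the tight degree-sum bookkeeping: one must use the exact identity $\sum d(w_i)=5k-1$, the forced corner degrees $7$ from part (e), and the lower bound $5$ on interior $w_i$'s to leave no slack for additional chords. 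Once this rigidity is established, the contradiction for $k\ge 10$ is a one-line check against part (a), and the identification $G\simeq X$ is then merely reading off the edges.
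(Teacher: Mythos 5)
Your proposal addresses only part~\ref{case:alleq} of the lemma, taking parts \ref{case:5degs}--\ref{case:twonoteq} as given. Since the statement is the full eight-part lemma, and parts \ref{case:5degs}--\ref{case:twonoteq} carry most of the content (the degree-sequence restrictions via Lemma~\ref{la:mvsdeg} and Lemma~\ref{la:nosuch}, the arc structure of $N(x_i)$ on $C_v$, the elimination of $U\cap V(C_v)$, the case $x_1x_2\in E(G)$, and the two ``corners coincide'' claims, which in the paper require the auxiliary family of Figure~\ref{fig:t5cotherodd2} and a diameter argument), the proposal as written leaves the bulk of the lemma unproven. That is the substantive gap.

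Within its scope, your argument for \ref{case:alleq} is correct and close to the paper's. Both derive $d(a_1)=d(a_2)=7$ identically: with $a_1=b_2$ and $a_2=b_1$, part \ref{case:noxy} confines the extra neighbors of each corner to $\{x_1,x_2,a_{3-i}\}$, and since $V(C_v)=W$ forces all $u_i$ to have degree $5$, each $w_i$ has exactly four edges in $M_v^{\star}$. The routes then diverge slightly: the paper invokes $d_3+d_4\le 12$ from \ref{case:5degs} with $d_2=d_3=7$ to force every vertex outside $\{v,a_1,a_2\}$ (including $x_1,x_2$) to have degree $5$, and reads $n=19$ off the degree sum $6n-12=(k-1)+14+5(n-3)$; you instead use the arc-length identity $d(x_1)+d(x_2)=|W|+2=k+1$ together with $d(x_i)\ge 5$ to get $d(x_i)\le k-4$, and then play $d_2\le\max(7,k-4)$ against the requirement $d_2\ge k-2$ (correctly also discarding the exceptional $n=23$ branch of \ref{case:5degs}) to force $k=9$. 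Both are valid; your version makes the rigidity of the $w_i$-degrees explicit via the exact sum $\sum d(w_i)=5k-1$, which is a clean touch, and the final identification of $X$ and the citation of $W(X)=W(T_{19}^5)$ match the paper.
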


\begin{proof} Note that $n\ge 15$, so $k\ge 7$. Let $G$, $v$, $x_1,x_2$ be as in the conditions.  Lemma~\ref{la:mvsdeg} yields \ref{case:5degs} . 

\ref{case:paths}: Assume  $i\in \{1,2\}$.
As $d(x_i)\ge 5$, $x_i$ has at least $4$ neighbors on $C_v$, so there are vertices $a_i,b_i\in N(x_i)\cap V(C_v)$ such that
all vertices in $N(x_i)\setminus \{x_{3-i} \} $ lie on the path $P_v(a_i,b_i)$ and $x_{3-i}$ does not lie in the interior of the subregion
of ${\mathcal{Q}}_v^{\star}$ bounded by the cycle $x_iP_v(a_i,b_i)$. 
By Lemma~\ref{la:mvs}~\ref{case:cutedge}, no two vertices in $P_v(a_i,b_i)$ can be joined by an edge that is not in $M_v^{\star}$. As every vertex of $C_v$
has at least one edge incident upon it from $E(G)\setminus E(M_v^{\star})$, we must have $V(P_v(a_i,b_i))\subseteq N(x_i)$, therefore
$V(P_v(a_i,b_i))=N(x_i)\setminus \{ x_{3-i} \} $. As all edges incident upon $x_1$ or $x_2$ lie in the region ${\mathcal{Q}}_v^{\star}$ and do not cross, the two paths share at most their endvertices. 

\ref{case:pathinw}: Assume to the contrary that $c\in U\cap V(P_v(a_i,b_i))$. As $d(x_i)\ge 5$, $P_v(a_i,b_i)$ has at least $4$ vertices. Therefore,
there exists an internal  vertex $c^{\star}$ of the path $P_v(a_i,b_i)$,
such that the edge $cc^{\star}$ is an edge of this path. This means 
$c^{\star}\in W$ (see Lemma~\ref{la:mvs}~\ref{case:cutedge}), $c^{\star}$ has at most $3$ edges incident upon it in $E(M_v^{\star})$, and
the only edge in $E(G)\setminus E(M_v^{\star})$ incident upon $c^{\star}$ is $c^{\star}x_i$, contradicting $d(c^{\star})\ge 5$. Thus,
$V(P_v(a_i,b_i))\subseteq W$. By Lemma~\ref{la:mvs}~\ref{case:edgesout} the internal vertices of the paths $P(b_i,a_{3-i})$ each have at least one edge
of $E(G)\setminus E(M_v^{\star})$ incident upon them. Thus, by the definition of $a_i,b_i$ and  $M_v^{\star}$, the internal vertices of the paths $P(b_i,a_{3-i})$ have to be adjacent to at least one
other internal vertex of the paths $P(b_i,a_{3-i})$. Lemma~\ref{la:mvs}~\ref{case:cutedge} implies that $V(P(b_i,a_{3-i}))\subseteq W$. So $V(C_v)=W$.
By part \ref{case:paths},  we have $|V(P_v(a_{3-i},b_{3-i})) \setminus V(P_v(a_i,b_i))|\geq 2$. Hence
$\Delta=|W| \geq |V(P_v(a_i,b_i))|+|V(P_v(a_{3-i},b_{3-i})) \setminus V(P_v(a_i,b_i))|$. As $N(x_{i})=V(P_v(a_i,b_i))$ or $V(P_v(a_i,b_i)) \cup \{x_{3-i} \}  $, depending 
on whether $x_1x_2$ is non-edge or edge,   the claimed upper bounds on $d(x_i)$ follow. Assume $x_1x_2\notin E(G)$. As we have
$5\le d(x_i)\le\Delta -2$, we have $\Delta\ge 7$, so $n\ge 17$. If $n=17$, then we must have $d(x_1)=d(x_2)=5$. As
$8=d(x_1)+d(x_2)-2\le |W|=7$, this is a contradiction. \ref{case:pathinw} follows.

 \ref{case:xy}: Let $x_1x_2\in E(G)$. Assume that $b_2\not=a_1$. Then  either $P_v(b_2,a_1)$ has an internal vertex $c$ or $b_2a_1\in E(G)$. 
 In the first case, as $c\in C_v$, there is at least one edge in 
$E(G)\setminus E(M_v^{\star})$ incident upon $c$ by Lemma~\ref{la:mvs}~\ref{case:edgesout}.
All edges of $E(G)\setminus E(M_v^{\star})$ incident upon
$c$ must lie in the subregion of ${\mathcal{Q}}_v^{\star}$ bounded by the cycle $P_v(b_2,a_1)x_1x_2$. By Lemma~\ref{la:mvs}~\ref{case:cutedge}
these edges must be of the form $cx_1$ or $cx_2$. But none of these are edges of $G$, which is a contradiction. 
    In the second case  $ b_2a_1\in E(G)$, and 
 the subregion of ${\mathcal{Q}}_v^{\star}$ bounded by the $4$-cycle $b_2a_1x_1x_2$ has no vertices in its interior, so
 we must have either $x_1b_2\in E(G)$ or $x_2a_1\in E(G)$, a contradiction. So $a_1=b_2$, and $a_2=b_1$, 
 and 
  $V(C_v)=W$  by \ref{case:pathinw}. All $w\in W$ are incident to 4 edges of $M_v^{\star}$, but $a_1,a_2\in W$ are incident to 2 more edges, and vertices
  of $W\setminus \{a_1,a_2\}$ are incident to one more, so $d(a_1)=d(a_2)=6$, and \ref{case:5degs}
gives $d_2\ge k-2$. Since $a_1,a_2$ and $v$ are vertices with degree greater than $5$,
and $G$ has at most $4$ vertices with degree greater than $5$, 
 we get $\min(d(x_1),d(x_2))=5$, which gives that $G\simeq T_n^5$ as claimed.

For the remaining cases assume that $x_1$ and $x_2$ are not adjacent, so $n\ge 19$ and $\Delta\ge 8$.
This also implies that $N(x_i)=V(P(a_i,b_i))$, so the paths $P(a_i,b_i)$ have at least $5$ vertices.

\ref{case:noxy}: 
In this case the edges $x_ia_i, x_ib_i$  lie on the boundary of the same face, so
$a_ib_i\in E(G)$, and $a_ib_ix_i$ is a boundary of a face.  Moreover, as $|V(P(a_i,b_i)|\ge 5$, $a_ib_i\notin E(M_v^{\star})$.
The rest of the statement is trivial if $a_1=b_2$ and $a_2=b_1$, so assume that is not the case. 
Consider the  connected subregion ${\mathcal{R}}$ of ${\mathcal{Q}}_v^{\star}$ bounded by the  cycle $P(b_1,a_2)P(b_2,a_1)$ (that has length at least $3$ by the assumption); it has no vertices in its interior.
Any edges between vertices of the cycle $P(b_1,a_2)P(b_2,a_1)$ are edges of this cycle or lie inside $\mathcal{R}$.
This finishes the proof  unless $a_1\ne b_2$ and $a_2\ne b_1$, so consider that to be the case.
Let  $c_i\in V(P_v(b_i,a_{3-i}))$ and $z\in V(G)\setminus\{x_1,x_2\}$ such that
$c_iz\in E(G)\setminus E(M_v^{\star})$.
As $c_i$ lies on the boundary of the connected subregion ${\mathcal{R}}$,
Lemma~\ref{la:mvs}~\ref{case:cutedge} gives that
$z\in V(P(b_{3-i},a_i))$, as claimed. Also, if $z_1,z_2 \in V(P(b_{3-i},a_i))$ are different neighbors of $c_i$ where $z_1z_2$ is not an edge of the path
$P_v(b_{3-i},a_i)$, 
then by Lemma~\ref{la:mvs}~\ref{case:cutedge} any internal vertex $z_3$
of the $z_1-z_2$ subpath of   $P(b_{3-i},a_i)$
can only have the edge $z_3c_i$ incident upon it from $E(G)\setminus E(M_v^{\star})$. Since by Lemma~\ref{la:mvs}~\ref{case:edgesout} $z_3$ must have an edge from $E(G)\setminus E(M_v^{\star})$
incident upon it, \ref{case:noxy} follows.

\ref{case:allnoteq}: 
Assume to the contrary that $a_1\ne b_2$ and $a_2\ne b_1$. By~\ref{case:noxy} we have $a_1b_1, a_2b_2\in E(G)\setminus E(M_v^{\star})$.
Let ${\mathcal{R}}$ be the connected subregion of ${\mathcal{Q}}_v^{\star}$ bounded by the cycle $P_v(b_2,a_1)P_v(b_1,a_2)$.
$G$ has at most 4 vertices with degree greater than 5.
As $d(v)=\Delta>6$, $V(G)\setminus\{v\}$ has at most $3$ vertices
with degree greater than $5$.
In particular, $C_v$ contains at most $3$ vertices with degree greater than $5$.
As by \ref{case:pathinw} 
$V(C_v)=W$,
each of
 $a_1,a_2,b_1,b_2$ has at least $4$ edges incident upon them in $E(M_v^{\star})$, and two edges incident upon them from $E(G)\setminus E(M_v^{\star})$ 
 (the edges $a_ib_i$, $a_ix_i$, $b_ix_i$). This gives that $a_1,b_1,a_2,b_2$ have degree at least $6$, a contradiction.
 \ref{case:allnoteq} follows.

	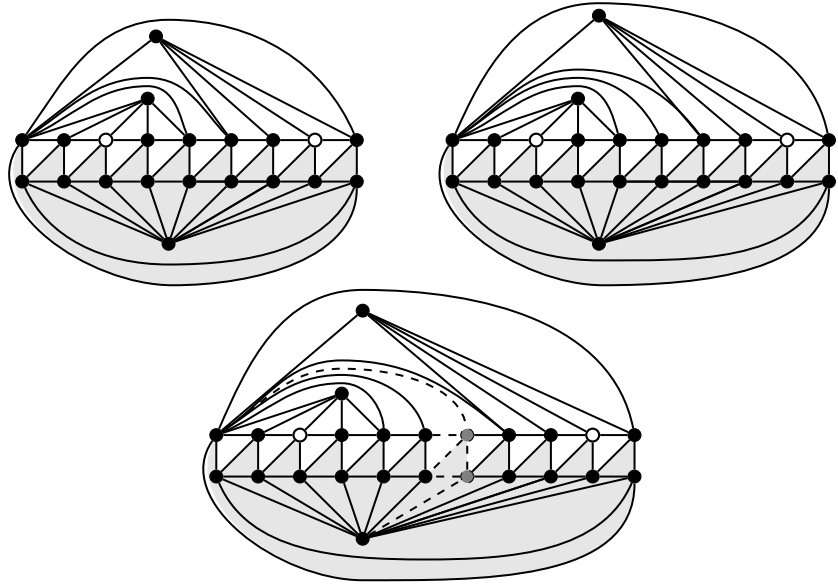
\begin{figure}[htbp]
		\centering
			\begin{tikzpicture}
			[scale=0.55,yscale=-1,inner sep=0.6mm,vertex/.style={circle,draw},thickedge/.style={line width=0.75pt}]
			\coordinate (A1) at (0,3);
			\coordinate (A2) at (1,3);
			\coordinate (A3) at (2,3);
			\coordinate (A4) at (3,3);
			\coordinate(A6) at (5,3) ;			
			\coordinate (A5) at (4,3);
			\coordinate (A7) at (6,3) ;
			\coordinate (A8) at (7,3);
			\coordinate (A9) at (8,3);								
			\coordinate (B1) at (0,2) ;
			\coordinate (B4) at (3,2);
			\coordinate (B6) at (5,2) ;			
			\coordinate (B5) at (4,2) ;
			\coordinate (B7) at (6,2) ;
			\coordinate (B8) at (7,2) ;
			\coordinate (B9) at (8,2);		
			\coordinate (B2) at (1,2) ;
			\coordinate (B3) at (2,2);
			
            \fill[gray!20] (B1) to[out=-240,in=180] (3.6,5.5) to[out=0,in=-270] (A9)--(B9)--(A8)--(B8)--(A7)--(B7)--(A6)--(B6)--(A5)--(B5)--(A4)--(B4)--(A3)--(B3)--(A2)--(B2)--(A1)--(B1);				
			
			\node[vertex] (p1) at (3.5,4.5) [fill=black] {};
			\node[vertex] (p2) at (3.2,-.5) [fill=black] {};		
			\node[vertex] (a1) at (0,3) [fill=black] {};
			\node[vertex] (a2) at (1,3) [fill=black] {};
			\node[vertex] (a3) at (2,3) [fill=black] {};
			\node[vertex] (a4) at (3,3) [fill=black] {};
			\node[vertex] (a9) at (5,3) [fill=black] {};			
			\node[vertex] (a5) at (4,3) [fill=black] {};
			\node[vertex] (a6) at (6,3) [fill=black] {};
			\node[vertex] (a7) at (7,3) [fill=black] {};
			\node[vertex] (a8) at (8,3) [fill=black] {};								
			\node[vertex] (b1) at (0,2) [fill=black] {};
			\node[vertex] (b2) at (3,1) [fill=black] {}; 
			\node[vertex] (b3) at (3,2) [fill=black] {};
			\node[vertex] (b8) at (5,2) [fill=black] {};			
			\node[vertex] (b4) at (4,2) [fill=black] {};
			\node[vertex] (b5) at (6,2) [fill=black] {};
			\node[vertex,thick] (b6) at (7,2) [fill=white] {};
			\node[vertex] (b7) at (8,2) [fill=black] {};		
			\node[vertex] (d1) at (1,2) [fill=black] {};
			\node[vertex,thick] (d2) at (2,2) [fill=white] {};
			\draw[thickedge] (p1)--(a1);
			\draw[thickedge] (p1)--(a2);
			\draw[thickedge] (p1)--(a3);
			\draw[thickedge] (p1)--(a4);
			\draw[thickedge] (p1)--(a8);
			\draw[thickedge] (p1)--(a9);			
            \draw[thickedge] (p1)--(a5);
            \draw[thickedge] (p1)--(a6);
            \draw[thickedge] (p1)--(a7);
			\draw[thickedge] (a1)--(a2)--(a3)--(a4);
			\draw[thickedge] (a4)--(a5);
			\draw[thickedge] (a5)--(a6);
			\draw[thickedge] (a6)--(a7);
			\draw[thickedge] (a5)--(a6);
			\draw[thickedge] (p1)--(a6);
			\draw[thickedge] (p2)--(b1);
			\draw[thickedge] (b4)--(b2); 
			\draw[thickedge] (b1) to[out=330,in=180] (3,.7) to[out=0,in=255] (b4);   	
			\draw[thickedge] (b1) to[out=320,in=180] (3,.5) to[out=0,in=235] (b8);   	
			
			\draw[thickedge] (p2)--(b8); %
			\draw[thickedge] (p2)--(b5);
			\draw[thickedge] (p2)--(b6);
			\draw[thickedge] (p2)--(b7);
			\draw[thickedge] (b1)--(b2)--(b3)--(b4);
			\draw[thickedge] (b4)--(b5);
			\draw[thickedge] (b5)--(b6);
			\draw[thickedge] (b6)--(b7);
			\draw[thickedge] (a1) to[out=70,in=180] (3.5,5) to[out=0,in=110] (a8);
            \draw[thickedge] (b1) to[out=300,in=180] (3.5,-.9) to[out=0,in=250] (b7);
            \draw[thickedge] (b1) to[out=-240,in=180] (3.6,5.5) to[out=0,in=-270] (a8);	
            \draw[thickedge] (d1)--(a1);
			\draw[thickedge] (d1)--(a2);
			\draw[thickedge] (d1)--(b1);
			\draw[thickedge] (d1)--(b2);
			\draw[thickedge] (d1)--(d2);
			\draw[thickedge] (d2)--(a2);
			\draw[thickedge] (d2)--(a3);
			\draw[thickedge] (d2)--(b2); 
			\draw[thickedge] (d2)--(b3);
			\draw[thickedge] (a1)--(b1);
			\draw[thickedge] (a3)--(b3);
			\draw[thickedge] (a4)--(b4);
			\draw[thickedge] (a5)--(b4);%
			\draw[thickedge] (a4)--(b3);
			\draw[thickedge] (a5)--(b8)--(a9)--(b5);%
			\draw[thickedge] (a6)--(b5);
			\draw[thickedge] (a6)--(b6);
			\draw[thickedge] (a7)--(b6);
			\draw[thickedge] (a8)--(b7);
			\draw[thickedge] (a8)--(a7);
			\draw[thickedge] (a7)--(b7);	
			\end{tikzpicture}
			\quad
			\begin{tikzpicture}
			[scale=0.55,yscale=-1,inner sep=0.6mm,vertex/.style={circle,draw},thickedge/.style={line width=0.75pt}]
			\coordinate (A1) at (0,3);
			\coordinate (A2) at (1,3);
			\coordinate (A3) at (2,3);
			\coordinate (A4) at (3,3);
			\coordinate(A6) at (5,3) ;			
			\coordinate (A5) at (4,3);
			\coordinate (A7) at (6,3) ;
			\coordinate (A8) at (7,3);
			\coordinate (A9) at (8,3);	
			\coordinate (A10) at (9,3);																
			\coordinate (B1) at (0,2) ;
			\coordinate (B4) at (3,2);
			\coordinate (B6) at (5,2) ;			
			\coordinate (B5) at (4,2) ;
			\coordinate (B7) at (6,2) ;
			\coordinate (B8) at (7,2) ;
			\coordinate (B9) at (8,2);		
			\coordinate (B2) at (1,2) ;
			\coordinate (B3) at (2,2);
			\coordinate (B10) at (9,2) ;
			
            \fill[gray!20] (B1) to[out=-240,in=180] (3.6,5.5) to[out=0,in=-270] (A10)--(B10)--(A9)--(B9)--(A8)--(B8)--(A7)--(B7)--(A6)--(B6)--(A5)--(B5)--(A4)--(B4)--(A3)--(B3)--(A2)--(B2)--(A1)--(B1);							
			
			\node[vertex] (p1) at (3.5,4.5) [fill=black] {};
			\node[vertex] (p2) at (3.5,-1) [fill=black] {};
			\node[vertex] (a1) at (0,3) [fill=black] {};
			\node[vertex] (a2) at (1,3) [fill=black] {};
			\node[vertex] (a3) at (2,3) [fill=black] {};
			\node[vertex] (a4) at (3,3) [fill=black] {};
			\node[vertex] (a10) at (5,3) [fill=black] {};						
			\node[vertex] (a9) at (6,3) [fill=black] {};			
			\node[vertex] (a5) at (4,3) [fill=black] {};
			\node[vertex] (a6) at (7,3) [fill=black] {};
			\node[vertex] (a7) at (8,3) [fill=black] {};
			\node[vertex] (a8) at (9,3) [fill=black] {};								
			\node[vertex] (b1) at (0,2) [fill=black] {};
			\node[vertex] (b2) at (3,1) [fill=black] {}; 
			\node[vertex] (b3) at (3,2) [fill=black] {};
			\node[vertex] (b9) at (5,2) [fill=black] {};						
			\node[vertex] (b8) at (6,2) [fill=black] {};			
			\node[vertex] (b4) at (4,2) [fill=black] {};
			\node[vertex] (b5) at (7,2) [fill=black] {};
			\node[vertex,thick] (b6) at (8,2) [fill=white] {};
			\node[vertex] (b7) at (9,2) [fill=black] {};		
			\node[vertex] (d1) at (1,2) [fill=black] {};
			\node[vertex,thick] (d2) at (2,2) [fill=white] {};
			\draw[thickedge] (p1)--(a1);
			\draw[thickedge] (p1)--(a2);
			\draw[thickedge] (p1)--(a3);
			\draw[thickedge] (p1)--(a4);
			\draw[thickedge] (p1)--(a8);
			\draw[thickedge] (p1)--(a9);	
			\draw[thickedge] (p1)--(a10);						
            \draw[thickedge] (p1)--(a5);
            \draw[thickedge] (p1)--(a6);
            \draw[thickedge] (p1)--(a7);
			\draw[thickedge] (a1)--(a2)--(a3)--(a4);
			\draw[thickedge] (a4)--(a5);
			\draw[thickedge] (a5)--(a6);
			\draw[thickedge] (a6)--(a7);
			\draw[thickedge] (a5)--(a6);
			\draw[thickedge] (p1)--(a6);
			\draw[thickedge] (p2)--(b1);
			\draw[thickedge] (b4)--(b2); 
			\draw[thickedge] (b1) to[out=330,in=180] (3,.7) to[out=0,in=260] (b4);   	
			\draw[thickedge] (b1) to[out=320,in=180] (3,.3) to[out=0,in=240] (b8);   	
			\draw[thickedge] (b1) to[out=320,in=180] (3,.5) to[out=0,in=250] (b9);   				
			\draw[thickedge] (p2)--(b8); %
			\draw[thickedge] (p2)--(b5);
			\draw[thickedge] (p2)--(b6);
			\draw[thickedge] (p2)--(b7);
			\draw[thickedge] (b1)--(b2)--(b3)--(b4);
			\draw[thickedge] (b4)--(b5);
			\draw[thickedge] (b5)--(b6);
			\draw[thickedge] (b6)--(b7);
			\draw[thickedge] (a1) to[out=70,in=180] (3.5,4.9) to[out=0,in=110] (a8);
            \draw[thickedge] (b1) to[out=290,in=180] (3.5,-1.3) to[out=0,in=260] (b7);
             \draw[thickedge] (b1) to[out=-240,in=180] (3.6,5.5) to[out=360,in=-270] (a8);	
            \draw[thickedge] (d1)--(a1);
			\draw[thickedge] (d1)--(a2);
			\draw[thickedge] (d1)--(b1);
			\draw[thickedge] (d1)--(b2);
			\draw[thickedge] (d1)--(d2);
			\draw[thickedge] (d2)--(a2);
			\draw[thickedge] (d2)--(a3);
			\draw[thickedge] (d2)--(b2); 
			\draw[thickedge] (d2)--(b3);
			\draw[thickedge] (a1)--(b1);
			\draw[thickedge] (a3)--(b3);
			\draw[thickedge] (a4)--(b4);
			\draw[thickedge] (a5)--(b4);
			\draw[thickedge] (a4)--(b3);
			\draw[thickedge] (a10)--(b8)--(a9)--(b5);
			\draw[thickedge] (a10)--(b9);%
			\draw[thickedge] (a5)--(b9); %
			\draw[thickedge] (a6)--(b5);
			\draw[thickedge] (a6)--(b6);
			\draw[thickedge] (a7)--(b6);
			\draw[thickedge] (a8)--(b7);
			\draw[thickedge] (a8)--(a7);
			\draw[thickedge] (a7)--(b7);	
			\end{tikzpicture}
			
			\begin{tikzpicture}
			[scale=0.55,yscale=-1,inner sep=0.6mm,vertex/.style={circle,draw},thickedge/.style={line width=0.75pt}]
			
			\coordinate (A1) at (0,3);
			\coordinate (A2) at (1,3);
			\coordinate (A3) at (2,3);
			\coordinate (A4) at (3,3);
			\coordinate(A6) at (5,3) ;			
			\coordinate (A5) at (4,3);
			\coordinate (A7) at (6,3) ;
			\coordinate (A8) at (7,3);
			\coordinate (A9) at (8,3);	
			\coordinate (A10) at (9,3);
			\coordinate (A11) at (10,3);																	
			\coordinate (B1) at (0,2) ;
			\coordinate (B4) at (3,2);
			\coordinate (B6) at (5,2) ;			
			\coordinate (B5) at (4,2) ;
			\coordinate (B7) at (6,2) ;
			\coordinate (B8) at (7,2) ;
			\coordinate (B9) at (8,2);		
			\coordinate (B2) at (1,2) ;
			\coordinate (B3) at (2,2);
			\coordinate (B10) at (9,2) ;
			\coordinate (B11) at (10,2);
			
            \fill[gray!20] (B1) to[out=-240,in=180] (3.6,5.5) to[out=0,in=-270] (A11)--(B11)--(A10)--(B10)--(A9)--(B9)--(A8)--(B8)--(A7)--(B7)--(A6)--(B6)--(A5)--(B5)--(A4)--(B4)--(A3)--(B3)--(A2)--(B2)--(A1)--(B1);

			\node[vertex] (p1) at (3.5,4.5) [fill=black] {};
			\node[vertex] (p2) at (3.5,-1) [fill=black] {};
			\node[vertex] (a1) at (0,3) [fill=black] {};
			\node[vertex] (a2) at (1,3) [fill=black] {};
			\node[vertex] (a3) at (2,3) [fill=black] {};
			\node[vertex] (a4) at (3,3) [fill=black] {};
			\node[vertex] (a11) at (5,3) [fill] {};							
			\node[vertex,dashed] (a10) at (6,3) [fill=gray] {};						
			\node[vertex] (a9) at (7,3) [fill=black] {};			
			\node[vertex] (a5) at (4,3) [fill=black] {};
			\node[vertex] (a6) at (8,3) [fill=black] {};
			\node[vertex] (a7) at (9,3) [fill=black] {};
			\node[vertex] (a8) at (10,3) [fill=black] {};								
			\node[vertex] (b1) at (0,2) [fill=black] {};
			\node[vertex] (b2) at (3,1) [fill=black] {}; 
			\node[vertex] (b3) at (3,2) [fill=black] {};
			\node[vertex,dashed] (b9) at (6,2) [fill=gray] {};						
			\node[vertex] (b8) at (7,2) [fill=black] {};			
			\node[vertex] (b4) at (4,2) [fill=black] {};
			\node[vertex] (b5) at (8,2) [fill=black] {};
			\node[vertex,thick] (b6) at (9,2) [fill=white] {};
			\node[vertex] (b7) at (10,2) [fill=black] {};	
			\node[vertex] (b10) at (5,2) [fill=black] {};						
			\node[vertex] (d1) at (1,2) [fill=black] {};
			\node[vertex,thick] (d2) at (2,2) [fill=white] {};
			\draw[thickedge] (p1)--(a1);
			\draw[thickedge] (p1)--(a2);
			\draw[thickedge] (p1)--(a3);
			\draw[thickedge] (p1)--(a4);
			\draw[thickedge] (p1)--(a8);
			\draw[thickedge] (p1)--(a9);	
			\draw[thickedge,dashed] (p1)--(a10);	
			\draw[thickedge] (p1)--(a11);														
            \draw[thickedge] (p1)--(a5);
            \draw[thickedge] (p1)--(a6);
            \draw[thickedge] (p1)--(a7);
			\draw[thickedge] (a1)--(a2)--(a3)--(a4);
			\draw[thickedge] (a4)--(a5);
			\draw[thickedge] (a5)--(a11);
			\draw[thickedge] (a10)--(a6);
			\draw[thickedge,dashed] (a10)--(a11);			
			\draw[thickedge] (a6)--(a7);
			\draw[thickedge] (p1)--(a6);
			\draw[thickedge] (p2)--(b1);
			\draw[thickedge] (b4)--(b2); 
			\draw[thickedge] (b1) to[out=330,in=180] (3,.75) to[out=0,in=270] (b4);   	
			\draw[thickedge] (b1) to[out=320,in=180] (3,.2) to[out=0,in=225] (b8);   	
			\draw[thickedge,dashed] (b1) to[out=320,in=180] (3,.4) to[out=0,in=265] (b9);   	
			\draw[thickedge] (b1) to[out=320,in=180] (3,.55) to[out=0,in=260] (b10);   								
			\draw[thickedge] (p2)--(b8); %
			\draw[thickedge] (p2)--(b5);
			\draw[thickedge] (p2)--(b6);
			\draw[thickedge] (p2)--(b7);
			\draw[thickedge] (b1)--(b2)--(b3)--(b4);
			\draw[thickedge] (b4)--(b10);
			\draw[thickedge,dashed] (b10)--(b9)--(a11);
			\draw[thickedge] (b9)--(b5);
			\draw[thickedge] (b5)--(b6);
			\draw[thickedge] (b6)--(b7);
			\draw[thickedge] (a1) to[out=70,in=180] (5,5) to[out=0,in=110] (a8);
            \draw[thickedge] (b1) to[out=290,in=180] (3.5,-1.5) to[out=0,in=260] (b7);
             \draw[thickedge] (b1) to[out=-240,in=180] (3.5,5.5) to[out=360,in=-270] (a8);	
            \draw[thickedge] (d1)--(a1);
			\draw[thickedge] (d1)--(a2);
			\draw[thickedge] (d1)--(b1);
			\draw[thickedge] (d1)--(b2);
			\draw[thickedge] (d1)--(d2);
			\draw[thickedge] (d2)--(a2);
			\draw[thickedge] (d2)--(a3);
			\draw[thickedge] (d2)--(b2); 
			\draw[thickedge] (d2)--(b3);
			\draw[thickedge] (a1)--(b1);
			\draw[thickedge] (a3)--(b3);
			\draw[thickedge] (a4)--(b4);
			\draw[thickedge] (a5)--(b4);
			\draw[thickedge] (a4)--(b3);
			\draw[thickedge] (a10)--(b8)--(a9)--(b5);
			\draw[thickedge,dashed] (a10)--(b9);%
			\draw[thickedge] (a11)--(b10);%
			
			\draw[thickedge] (a5)--(b10); %
			\draw[thickedge] (a6)--(b5);
			\draw[thickedge] (a6)--(b6);
			\draw[thickedge] (a7)--(b6);
			\draw[thickedge] (a8)--(b7);
			\draw[thickedge] (a8)--(a7);
			\draw[thickedge] (a7)--(b7);	
			\end{tikzpicture}			
			\caption{5-connected triangulations of order  $n=21$, $23$ and $n\geq 25$, which have the same degree sequence as $T_n^5$. 
			The gray regions show the mosaic graphs around the vertex of degree $k-1$. The gray vertices and dashed edges on the triangulation of order $25$  indicate the pattern to be repeated to get   the construction for higher odd order. 
			The two white vertices are at distance $4$.} 
			\label{fig:t5cotherodd2}
    \end{figure}

\ref{case:twonoteq}: 
Assume to the contrary that $a_1\ne b_2$ or $a_2\ne b_1$. By~\ref{case:allnoteq}, without loss of generality we have
$a_1=b_2$ and  $a_2\ne b_1$.  By definition $a_1x_1,a_1x_2\in E(G)$ and by~\ref{case:noxy} all vertices of $P_v(b_1,a_2)$ are neighbors
of $a_1$. All other neighbors of $a_1$ are one of the 4 neighbors of $a_1$ in $M_v^{\star}$. Consequently $d(a_1)=6+|V(P_v(b_1,a_2))|\ge 8$,
so $d(a_1)=d_2\in\{k-1,k-2,k-3\}$, and if $d(a_1)=k-3$, then $G$ contains no degree $6$ vertices. 
As $V(C_v)=W$, we must have $d(a_2)=d(b_1)=6$, as $a_2,b_1$ each have 4 neighbors in $M_v^{\star}$, and both
are joined to $a_1=b_2$, and to a single $x_i$, and not joined to anything else. By~\ref{case:5degs} $d(a_1)\ge k-2$, and,
as $v,a_1,a_2,b_1$ are the $4$ vertices of degree greater than $5$, all other vertices (including $x_1$ and $x_2$) have degree $5$.
So every $w\in W\setminus \{a_1,a_2,b_1\}$  has  4 neighbors in $M_v^{\star}$, and is joined by an edge in $E(G)\setminus E(M_v^{\star})$  
to exactly one of the vertices $x_1,a_1,x_2$, and  the paths $P(a_i,b_i)$ have $5$ vertices each.
As the sum of degrees is $6n-12=k-1+d(a_1)+12+5(n-4)$, we get $d(a_1)=k-2$. As $d(a_1)\ge 8$, this gives $n\ge 21$. Figure~\ref{fig:t5cotherodd2} has the graph $G$ for all $n\ge 21$.
Since
 $G$ has the same degree sequence
as $T_n^5$ and $W(G)\le W(T_n^5)$, by Lemma~\ref{la:basics}~\ref{case:lowerbound} we must have $W(G)=W(T_n^5)$ and the diameter of $G$ is at most $3$. However, $G$ has diameter at least $4$, as demonstratred on Figure~\ref{fig:t5cotherodd2}, a contradiction.
\ref{case:twonoteq} follows.

\ref{case:alleq}: By \ref{case:twonoteq}, $a_1=b_2$ and  $a_2=b_1$. By \ref{case:noxy} $a_1a_2\in E(G)$.
By \ref{case:pathinw} $V(C_v)=W$, and any edge from $E(G)\setminus E(M_v^{\star})$ incident upon a vertex 
$w\in W\setminus\{a_1,a_2\}$ connects $w$ to exactly one of $x_1,x_2$.  
Each  $a_i$ has 4 incident edges in $E(M_v^{\star})$, and in addition, it is joined to exactly 3 more vertices: $x_1,x_2,a_{3-i}$. So $d(a_1)=d(a_2)=7=d_2=d_3$. By \ref{case:5degs}, 
$d_3+d_4\le 12$, consequently all vertices of $V(G)\setminus\{v,a_1,a_2\}$ (including $x_1$ and $x_2$) have degree $5$. As
$6n-12=k-1+14+5(n-3)$, $n=19$. We have that $G\simeq X$ and $W(G)=W(T_{19}^5)$ (see Figure~\ref{fig:t5cotherodd1}).
\end{proof}

The following theorem now follows:
\begin{theorem} 
\label{t5cminwiodd}
Let $n\ge 15$ be odd. If $n\ne 19$, then the unique minimizer of the  Wiener index    among $5$-connected triangulations of order $n$  is $T_n^5$. If $n=19$, then there are precisely two minimizers, $T_{19}^5$ and $X$.
\end{theorem}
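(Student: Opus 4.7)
The plan is to harvest the structural classification already packaged inside Lemma~\ref{la:mvsodd}, which does essentially all the heavy lifting. Let $G$ be any $5$-connected triangulation of order $n$ (odd, $n\ge 15$) that minimizes the Wiener index. Since $T_n^5$ is itself a $5$-connected triangulation of order $n$ by Lemma~\ref{Tn5lista}, we immediately have $W(G)\le W(T_n^5)$, so the hypothesis of Lemma~\ref{la:mvsodd} is satisfied. The idea is now to pick a vertex $v$ of maximum degree, form the extended mosaic $M_v^{\star}$, and examine the two vertices $x_1,x_2$ in $V(G)\setminus V(M_v^{\star})$ (guaranteed by Lemma~\ref{la:mvsodd}\ref{case:5degs}).

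First I would dispatch the easy case: if $x_1x_2\in E(G)$, then Lemma~\ref{la:mvsodd}\ref{case:xy} directly gives $G\simeq T_n^5$, which settles everything for such $G$. Otherwise $x_1x_2\notin E(G)$; in that case Lemma~\ref{la:mvsodd}\ref{case:alleq} forces $n=19$ and $G\simeq X$. Combining the two subcases, every Wiener-minimizer of odd order at least $15$ is isomorphic to either $T_n^5$ or $X$, and the latter only occurs when $n=19$.

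It remains to verify that for $n=19$ both $T_{19}^5$ and $X$ are actually minimizers (and that for $n\ne 19$ no additional minimizer hides). For the first point, equation~(\ref{indexX}) tells us $W(X)=W(T_{19}^5)$, so if either graph is minimal the other must be too; since the case analysis above shows $T_{19}^5$ is a candidate minimizer of order $19$, both graphs attain the minimum. For $n\ne 19$, the second case cannot occur, so $T_n^5$ is the unique minimizer up to isomorphism.

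The proof really is a short wrap-up: the genuine obstacle was already resolved inside Lemma~\ref{la:mvsodd}, whose hardest sub-steps were the enumeration of possible top degrees in \ref{case:5degs}, the sunflower-style combinatorial arguments controlling the edges leaving $C_v$ (parts \ref{case:paths}--\ref{case:noxy}), and the case distinction leading to $a_1=b_2, a_2=b_1$ in \ref{case:twonoteq}. Once those have been established, the theorem reduces to assembling the two cases $x_1x_2\in E(G)$ and $x_1x_2\notin E(G)$ and observing that the numerical equality $W(X)=W(T_{19}^5)$ upgrades ``candidate minimizer'' to ``minimizer'' in the exceptional order $n=19$.
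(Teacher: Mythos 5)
Your proposal is correct and matches the paper exactly: the paper gives no separate argument for Theorem~\ref{t5cminwiodd}, stating only that it ``now follows'' from Lemma~\ref{la:mvsodd}, and your assembly---case split on $x_1x_2\in E(G)$ versus $x_1x_2\notin E(G)$ via parts \ref{case:xy} and \ref{case:alleq}, plus the equality $W(X)=W(T_{19}^5)$ from (\ref{indexX}) to confirm both graphs attain the minimum at $n=19$---is precisely the intended deduction.
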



\bibliographystyle{amcjoucc}

\end{document}